\newtheorem{theorem}{Theorem}[section]
\newtheorem{corollary}[theorem]{Corollary}
\newtheorem{remark}[theorem]{Remark}
\newtheorem{lemma}[theorem]{Lemma}
\newtheorem{proposition}[theorem]{Proposition}
\numberwithin{equation}{section}
\def\r{\mathbb{R}}
\def\rn{\mathbb{R}^N}
\def\n{\mathbb{N}}
\def\eps{\varepsilon}
\def\rh{\rightharpoonup}
\def\irn{\int_{\r^N}}
\def\vp{\varphi}
\def\o{\Omega}
\def\bf{\boldsymbol}
\def\cC{\mathcal{C}}
\def\cH{\mathcal{H}}
\def\cJ{\mathcal{J}}
\def\cM{\mathcal{M}}
\def\cN{\mathcal{N}}
\def\cT{\mathcal{T}}
\def\cU{\mathcal{U}}
\def\supp{\mathrm{supp}}
\def\what{\widehat}
\def\gen{\mathrm{genus}}
\def\r{\mathbb{R}}
\def\rn{\mathbb{R}^N}
\def\n{\mathbb{N}}
\def\eps{\varepsilon}
\def\rh{\rightharpoonup}
\def\irn{\int_{\r^N}}
\def\vp{\varphi}
\def\cJ{\mathcal{J}}
\def\cM{\mathcal{M}}
\def\cN{\mathcal{N}}
\def\cU{\mathcal{U}}
\def\bt{\mathbf{t}}
\author{Mónica Clapp,
Alberto Saldaña\footnote{ A. Saldaña is supported  by
CONAHCYT grant CBF2023-2024-116 (Mexico) and by UNAM-DGAPA-PAPIIT grant IA100923 (Mexico).
}
\ and Andrzej Szulkin}
\title{On a Schrödinger system with shrinking regions of attraction}
\date{}
\begin{document}
\maketitle

\begin{abstract}
In this paper we consider a competitive weakly coupled elliptic system in which each species is attracted to a small region in $\rn$ and repelled from its complement. In this setting, we establish the existence of infinitely many solutions and of a nonnegative least energy solution. We show that, as the regions of attraction shrink, least energy solutions of the system concentrate. We study this behavior and characterize their limit profile. In particular, we show that if each component of a least energy solution is attracted to a different region, then the components decouple in the limit, whereas if all the components are attracted to the same region, they remain coupled.

\smallskip

\emph{Key words and phrases.} Shrinking regions of attraction, concentration, limit profile, competitive weakly coupled elliptic system, phase separation.
\smallskip

\emph{2020 Mathematics Subject Classification.}
35J50, 
35B08, 
35B20, 
35B40. 

\end{abstract}

\section{Introduction}

Consider the system of elliptic equations
\begin{equation} \label{eq:system} 
\begin{cases}
-\Delta v_i + v_i = \mu_iQ_\varepsilon(x-y_i)|v_i|^{2p-2}v_i + \sum\limits_{\substack{j=1 \\ j\neq i}}^\ell\lambda_{ij}|v_j|^p|v_i|^{p-2}v_i, \\
v_i\in H^1(\rn), \quad v_i\ne 0,\quad i=1,\ldots,\ell,
\end{cases}
\end{equation}
where $N\ge 3$, $y_i\in\rn$, $\mu_i>0$, $\lambda_{ij}=\lambda_{ji}<0$, $1<p<2^*/2$, $\eps>0$ and
\begin{equation}\label{Qdef}
Q_{\eps}(x):=
\begin{cases}
1 &\text{if \ }|x|<\eps, \\
-1 &\text{if \ }|x|\geq\eps.
\end{cases}
\end{equation}
As usual, $2^*$ is the critical Sobolev exponent, i.e., $2^* := \frac{2N}{N-2}$.  In this context, the solutions $v_i$ may represent some particles or species that are attracted to the small region $B_\eps(y_i)=\{x\in \rn\::\: |x-y_i|<\eps\}$ and repelled from its complement. Furthermore, since $\lambda_{ij}<0$, the system is \emph{competitive}, and therefore different species repel each other.

In the present paper we are interested in studying the existence and the limit profile of solutions to~\eqref{eq:system} as $\eps\to 0$.

The following is our main existence result.  A solution is called \emph{nonnegative} if $v_i\ge 0$ for all $i$.

\begin{theorem} \label{thm:multiplicity}
The system~\eqref{eq:system} has an unbounded sequence of solutions. At least one of them is a nonnegative least energy solution.
\end{theorem}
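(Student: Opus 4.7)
The proof is variational, built around the $C^1$ functional on $\cH:=[H^1(\rn)]^\ell$,
\[
\cJ(\bv)=\frac12\sum_{i=1}^\ell\|v_i\|^2-\frac{1}{2p}\sum_{i=1}^\ell\mu_i\irn Q_\varepsilon(x-y_i)|v_i|^{2p}-\frac{1}{2p}\sum_{i\neq j}\lambda_{ij}\irn |v_i|^p|v_j|^p,
\]
whose nontrivial critical points are solutions of \eqref{eq:system}. The analytic engine driving both parts is the Palais--Smale condition. From $\cJ(\bv)-\frac{1}{2p}\cJ'(\bv)\bv=\frac{p-1}{2p}\sum_i\|v_i\|^2$, every PS sequence is bounded. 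If strong convergence fails, a concentration-compactness argument produces a nontrivial profile $\bw$ at infinity along translations $\bv^n(\cdot+z^n)$ with $|z^n|\to\infty$. Since $Q_\varepsilon(\cdot-y_i+z^n)\to -1$ locally, $\bw$ would solve the autonomous ``limit at infinity'' system $-\Delta w_i+w_i=-\mu_i|w_i|^{2p-2}w_i+\sum_{j\neq i}\lambda_{ij}|w_j|^p|w_i|^{p-2}w_i$. Testing the $i$-th equation with $w_i$ produces a sum of nonnegative terms equal to zero (using $\mu_i>0$ and $\lambda_{ij}<0$), forcing $\bw=0$. Hence (PS) holds on every level.

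For the least-energy nonnegative solution I would minimize $\cJ$ on the full Nehari manifold
\[
\cN:=\{\bv\in\cH: v_i\neq 0,\ g_i(\bv):=\partial_{v_i}\cJ(\bv)\cdot v_i=0,\ i=1,\dots,\ell\}.
\]
Choosing test functions $\varphi_i\in C_c^\infty(B_\varepsilon(y_i))$ with pairwise disjoint supports (achievable by a small translation if some $y_i$'s coincide) gives $\int Q_\varepsilon(x-y_i)|\varphi_i|^{2p}>0$ and vanishing cross terms, so $(t_1,\dots,t_\ell)\mapsto \cJ(t_1\varphi_1,\dots,t_\ell\varphi_\ell)$ has a unique maximum in $(0,\infty)^\ell$ and $\cN\neq\emptyset$. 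The competitive assumption $\lambda_{ij}<0$ makes $\cN$ a $C^1$-manifold of codimension $\ell$ on which Lagrange multipliers vanish at constrained critical points: the Jacobian matrix $(\partial_{v_i}g_j(\bv)\cdot v_i)$ has positive off-diagonal entries $-p\lambda_{ij}\int|v_i|^p|v_j|^p$ and, thanks to the $\cN$-relation $\mu_i\int Q_\varepsilon|v_i|^{2p}=\|v_i\|^2+\sum_{j\neq i}|\lambda_{ij}|\int|v_j|^p|v_i|^p>0$, its row sums reduce to $(2-2p)\|v_i\|^2<0$, so its negative is a nonsingular $M$-matrix. Since $\cJ|_\cN=\frac{p-1}{2p}\sum_i\|v_i\|^2$, minimizing sequences for $c:=\inf_\cN\cJ>0$ are bounded, and (PS) delivers a minimizer $\bv^*$, which is automatically a free critical point of $\cJ$. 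Invariance of $\cJ$ and of the Nehari conditions under $\bv\mapsto|\bv|$ lets us take $\bv^*\geq 0$; elliptic regularity completes this part.

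For the unbounded sequence of solutions I would run a $\mathbb{Z}_2^\ell$-symmetric Ljusternik--Schnirelmann scheme on $\cN$ (the group acting by componentwise sign flips, under which $\cJ$ is invariant). Setting
\[
c_k:=\inf\bigl\{\sup_{\bv\in A}\cJ(\bv):\ A\subset\cN,\ A=-A,\ A\text{ compact},\ \gen(A)\geq k\bigr\},
\]
each $c_k$ is finite because one can construct symmetric compact subsets of $\cN$ of arbitrary genus by taking $k$ linearly independent smooth functions with pairwise disjoint supports inside $\cup_i\overline{B_\varepsilon(y_i)}$, distributing them among the $\ell$ component slots, and projecting the unit sphere of their span onto $\cN$. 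The deformation lemma, driven by (PS), then shows each $c_k$ is a critical value of $\cJ|_\cN$ and hence of $\cJ$; a classical genus argument forces $c_k\to\infty$ (otherwise the critical set inside a fixed energy sublevel, compact by (PS), would have infinite genus).

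The hardest step I anticipate is the Nehari structure of Step~2: because $Q_\varepsilon$ changes sign, the standard positive-weight Nehari calculations do not apply verbatim, and the required invertibility of the Jacobian along $\cN$ must be argued by combining the competitive sign $\lambda_{ij}<0$ with the ``hidden positivity'' $\mu_i\int Q_\varepsilon|v_i|^{2p}>0$ extracted from the constraint itself. Once this manifold structure is in place, the PS condition, the Nehari minimization, and the symmetric LS scheme all proceed along well-worn lines, with the repulsion at infinity of $Q_\varepsilon$ supplying the compactness that $\rn$ otherwise denies.
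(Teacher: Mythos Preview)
Your overall strategy is sound and would yield the theorem, but it diverges from the paper's route in two substantive ways.

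\textbf{Palais--Smale.} You invoke concentration-compactness and a profile at infinity. The paper does something much lighter: after extracting a weak limit $\bf u$ (automatically a solution), it applies Fatou's lemma to the two ``bad'' pieces---the integral of $|u_{n,i}|^{2p}$ over $\{Q_\varepsilon=-1\}$ and the coupling terms $|\lambda_{ij}|\int|u_{n,j}|^p|u_{n,i}|^p$---both of which enter with a minus sign. This squeezes $\|u_{n,i}\|_\varepsilon\to\|u_i\|_\varepsilon$ directly, with no splitting lemma or profile decomposition. Your argument is correct in spirit but heavier than necessary; the sign structure of $Q_\varepsilon$ and $\lambda_{ij}$ already gives compactness for free.

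\textbf{Nehari structure and multiplicity.} You work directly on $\cN$ and verify it is a $C^1$-manifold via the $M$-matrix computation (which is correct and elegant). The paper instead follows the abstract reduction of \cite{csz}: it pulls back $\cJ_\varepsilon$ to a $C^1$ functional $\Psi_\varepsilon$ on an \emph{open} subset $\cU_\varepsilon$ of the product of unit spheres $\cT_\varepsilon$, where $\cU_\varepsilon$ consists exactly of those directions $\bf u$ for which $t\mapsto\cJ_\varepsilon(\bf t\bf u)$ has a maximum on $(0,\infty)^\ell$. The sign-changing weight means $\cU_\varepsilon$ may be strictly smaller than $\cT_\varepsilon$, but the key observation (Lemma~\ref{bfu}$(c)$) is that $\Psi_\varepsilon\to\infty$ at $\partial\cU_\varepsilon$, so the boundary is invisible to min-max. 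This framework sidesteps your Lagrange-multiplier and uniform-nondegeneracy checks entirely, and makes the genus construction cleaner: one builds odd maps $\mathbb{S}^{k-1}\to\cU_\varepsilon$ using functions with disjoint supports inside the balls $B_\varepsilon(y_i)$ (Lemma~\ref{lem:genus}), which is essentially your construction but without needing to verify that the ``projection onto $\cN$'' is globally defined on the test sphere.

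In short: both approaches work. Yours is more hands-on and requires checking the $M$-matrix nondegeneracy uniformly along minimizing/PS sequences and that your high-genus spheres land in the projectable region; the paper's $\Psi_\varepsilon$-on-$\cU_\varepsilon$ machinery absorbs these issues into the abstract setup of \cite{csz} and replaces your concentration-compactness step by a two-line Fatou argument.
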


Next we analyse the asymptotic behavior of least energy solutions as the parameter $\eps$ tends to zero, which corresponds to shrinking of the regions of attraction.  We show that this leads to different concentration phenomena depending on whether the points $y_i$ are distinct or not. We study the cases where all the points are distinct ($y_i\neq y_j$ for $i\neq j$) and when all points are the same ($y_i=y_j$ for all $i,j=1,\ldots,\ell$), and via a suitable rescaling and translation, we show that this leads to different limit profiles. Note that the system~\eqref{eq:system} does not possess a nontrivial solution for $\eps=0$ as can be easily seen by multiplying the equation in~\eqref{eq:system} by $v_i$ and integrating.

Let $E:=D^{1,2}(\rn)\cap L^{2p}(\rn)$ be the Banach space whose norm is given by
$$\|u\|_E^2:=\|u\|^2+|u|_{2p}^2,\qquad\text{where \ } \ \|u\|^2:=\irn|\nabla u|^2\text{ \ and \ }|u|_{2p}^{2p}:=\irn|u|^{2p}.$$
The following is our main result when all the concentration points $y_i$ are different.

\begin{theorem} \label{thm:uncoupled}
Assume that $y_i\neq y_j$ if $i\neq j$. Let $\eps_n\to 0$ and let $\bf v_n=(v_{n,1},\ldots,v_{n,\ell})$ be a nonnegative least energy solution to the system~\eqref{eq:system} with $\eps=\eps_n$. Set $w_{n,i}(x):=\eps_n^{\frac1{p-1}} v_{n,i}(\eps_n x + y_i)$. Then, after passing to a subsequence, $w_{n,i}\to w_i$ strongly in $E$ to a positive least energy solution $w_i$ of the equation
\begin{equation}\label{eq:limiting}
\begin{cases}
-\Delta w = \mu_iQ_1(x)|w|^{2p-2}w, \\
w\in E:=D^{1,2}(\rn)\cap L^{2p}(\rn), \quad w\neq 0,
\end{cases}
\end{equation}
for every $i=1,\ldots,\ell$. As a consequence, for any $\delta>0$, 
\begin{align}\label{q}
 \lim_{n\to\infty} \frac{\int_{|x-y_i|\leq\delta}(|\nabla v_{n,i}|^2+v_{n,i}^2)}{\irn (|\nabla v_{n,i}|^2+v_{n,i}^2)} = 1 \qquad \text{and} \qquad \lim_{n\to\infty} \frac{\int_{|x-y_i|\leq\delta}|v_{n,i}|^{2p}}{\irn |v_{n,i}|^{2p}} = 1.
\end{align}
\end{theorem}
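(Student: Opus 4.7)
The plan is to rescale \eqref{eq:system} into a problem with attraction region $B_1(0)$, derive uniform $E$-bounds on the rescaled sequence by matching the minimal energy with the sum of the limiting ground-state energies, and then pass to weak limits while exploiting $|\tau_{n,ij}|:=|y_i-y_j|/\eps_n\to\infty$ to decouple the components. A direct calculation using $Q_{\eps_n}(\eps_n x)=Q_1(x)$ shows that $w_{n,i}$ satisfies
\begin{equation*}
-\Delta w_{n,i}+\eps_n^2 w_{n,i}=\mu_iQ_1(x)|w_{n,i}|^{2p-2}w_{n,i}+\sum_{j\ne i}\lambda_{ij}|w_{n,j}(\cdot+\tau_{n,ij})|^p|w_{n,i}|^{p-2}w_{n,i},
\end{equation*}
with $\tau_{n,ij}:=(y_i-y_j)/\eps_n$. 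Setting $\alpha_n:=\eps_n^{N-2-2/(p-1)}$ (so $\alpha_n\to+\infty$ since $p<2^*/2$), every term in the energy scales by $\alpha_n$ except the $L^2$-part, which picks up an additional factor $\eps_n^2$; hence $\Phi_{\eps_n}(\bv_n)=\alpha_n\widetilde\Phi_n(\bw_n)$, where $\widetilde\Phi_n$ is formally expected to converge as $n\to\infty$ to the decoupled energy $\bw\mapsto\sum_i\cJ_i(w_i)$, with $\cJ_i$ the functional of \eqref{eq:limiting}.

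Let $c_i^\infty>0$ denote the least-energy level of \eqref{eq:limiting}, whose existence follows from standard variational methods on $E$ since $2p<2^*$. Testing with $v_{n,i}^\ast(y):=\eps_n^{-1/(p-1)}w_i^\ast((y-y_i)/\eps_n)$, where each $w_i^\ast$ is a nonnegative ground state of \eqref{eq:limiting}, and performing the usual Nehari projection yields $c_{\eps_n}\le\alpha_n\sum_i c_i^\infty+o(\alpha_n)$; the mixed cross-integrals are negligible after rescaling because the supports concentrate near the distinct points $y_i$. Combined with the Nehari identity $\Phi_{\eps_n}(\bv_n)=\bigl(\tfrac12-\tfrac1{2p}\bigr)\sum_i\|v_{n,i}\|_{H^1}^2$, this gives $\|w_{n,i}\|^2+\eps_n^2|w_{n,i}|_2^2\le C$. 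Pairing the rescaled equation against $w_{n,i}$ and using $\lambda_{ij}<0$ yields
\begin{equation*}
\|w_{n,i}\|^2+\mu_i|w_{n,i}|_{2p}^{2p}\le 2\mu_i\int_{B_1(0)}|w_{n,i}|^{2p}\le C\|w_{n,i}\|^{2p},
\end{equation*}
from which $\|w_{n,i}\|_E\le C$, the lower bound $\|w_{n,i}\|\ge c>0$, and $\int_{B_1(0)}|w_{n,i}|^{2p}\ge c'>0$ all follow.

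Extract a subsequence with $w_{n,i}\rh w_i$ weakly in $E$ and strongly in $L^{2p}_{\mathrm{loc}}(\rn)$. For any $\vp\in C_c^\infty(\rn)$, the coupling contribution in the weak formulation is controlled by
\begin{equation*}
\left(\int_{\supp\vp+\tau_{n,ij}}|w_{n,j}|^{2p}\right)^{1/2}|w_{n,i}|_{2p}^{p-1}|\vp|_{2p},
\end{equation*}
which tends to $0$ since $|\tau_{n,ij}|\to\infty$ and the sequence $(|w_{n,j}|^{2p})_n$ is tight at infinity; the latter follows from standard elliptic decay estimates, because outside $B_1(0)$ the rescaled equation gives $-\Delta w_{n,i}+\mu_i w_{n,i}^{2p-1}\le 0$ (the coupling contributes with the right sign). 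Passing to the limit shows that $w_i$ solves \eqref{eq:limiting}, and $\int_{B_1(0)}|w_i|^{2p}\ge c'>0$ rules out $w_i\equiv 0$; hence $\cJ_i(w_i)\ge c_i^\infty$.

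The squeeze
\begin{equation*}
\textstyle\sum_i c_i^\infty+o(1)\ge c_{\eps_n}/\alpha_n=\bigl(\tfrac12-\tfrac1{2p}\bigr)\sum_i\bigl(\|w_{n,i}\|^2+\eps_n^2|w_{n,i}|_2^2\bigr)\ge\bigl(\tfrac12-\tfrac1{2p}\bigr)\sum_i\|w_i\|^2=\sum_i\cJ_i(w_i)\ge\sum_i c_i^\infty
\end{equation*}
(the middle inequality uses weak lower semicontinuity) forces equality throughout, so $\cJ_i(w_i)=c_i^\infty$, $\|w_{n,i}\|\to\|w_i\|$, and $\eps_n^2|w_{n,i}|_2^2\to 0$. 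Using the Nehari identity for \eqref{eq:limiting} together with Rellich compactness on $B_1(0)$ then yields $|w_{n,i}|_{2p}\to|w_i|_{2p}$, upgrading the convergence to strong in $E$. Positivity of $w_i$ follows from the strong maximum principle applied to $-\Delta w_i+\mu_iw_i^{2p-1}\ge 0$, and \eqref{q} is obtained by undoing the change of variables $z=(y-y_i)/\eps_n$ and invoking the strong convergence in $E$. The main obstacle is the simultaneous matching of the upper and lower energy bounds with the tightness estimates needed to eliminate the coupling, without circular dependence; in particular, the upper bound itself relies on the scaling-invariant existence theory for the limit equation \eqref{eq:limiting}, which is non-trivial in the space $E$.
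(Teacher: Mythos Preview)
Your argument is correct and shares the paper's overall scaffolding (rescale, uniform $E$-bounds, weak limits, energy squeeze, strong convergence), but the key step---killing the coupling terms when passing to the limit---is handled differently. You appeal to uniform tightness of $(|w_{n,j}|^{2p})$ at infinity; this is true and follows by testing the rescaled equation with $\chi_R w_{n,j}$ for a cutoff $\chi_R$ supported in $\{|x|>R\}$, using that all terms outside $B_1(0)$ have the favourable sign and estimating the commutator via H\"older and the uniform $L^{2p}$-bound (the exponent works precisely because $p<\tfrac{N}{N-2}$). The paper instead translates each component to \emph{every} center $y_j/\eps_n$, extracts weak limits $w_{i,j}$ of $u_{n,i}(\cdot+y_j/\eps_n)$, and observes that for $j\neq i$ the function $Q_1(\cdot-(y_i-y_j)/\eps_n)$ equals $-1$ on any fixed compact set for large $n$, so the limiting equation reads $-\Delta w_{i,j}\le 0$ against nonnegative test functions and forces $w_{i,j}=0$; the coupling then vanishes by local compactness alone. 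Your tightness route is more quantitative and, once established, gives the global vanishing of the interaction integrals for free (streamlining the passage to $|w_{n,i}|_{2p}\to|w_i|_{2p}$); the paper's translated-maximum-principle route is purely qualitative and avoids any tail estimate, relying only on weak convergence, Rellich, and Fatou. One minor contrast: your upper bound uses actual ground states of \eqref{eq:limiting} as competitors, presupposing their existence in $E$, whereas the paper first obtains the a priori bound with compactly supported test functions and only invokes $\cC_c^\infty$-approximations of minimizers of \eqref{eq:limiting} in the final contradiction argument for the least-energy property.
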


The concentration is reflected in the limits in~\eqref{q}. Theorem~\ref{thm:uncoupled} shows that, when all the concentration points are different, after rescaling and translation, the components \emph{decouple} as $\eps\to 0$ and the limit profile of the $i$-th component is a least energy solution of \emph{the single equation}~\eqref{eq:limiting}. This equation has been studied in \cite{chs}, where it is shown that any positive least energy solution $w$ of~\eqref{eq:limiting} is radially symmetric with respect to the origin and decreasing in the radial direction. Moreover, for $N\ge 3$ there is $C>0$ such that
$w(x)\le C|x|^{2-N}$. A lower bound for $w$ is also available if $N\ge 3$ and $p\in(\frac{N-1}{N-2},\frac{N}{N-2})$. See \cite[Theorems 1.4 and 1.5]{chs} for these results.

The next theorem studies the case when all the concentration points $y_i$ are the same.

\begin{theorem} \label{thm:coupled}
Assume that $y_i=0$ for every $i=1,\ldots,\ell$. Let $\eps_n\to 0$ and let $\bf v_n=(v_{n,1},\ldots,v_{n,\ell})$ be a nonnegative least energy solution to the system~\eqref{eq:system} with $\eps=\eps_n$. Set $u_{n,i}(x):=\eps_n^{\frac1{p-1}} v_{n,i}(\eps_n x)$. Then, after passing to a subsequence, $(\bf u_n)$ converges strongly in $E$ to a nonnegative least energy solution of the limit system
\begin{equation} \label{eq:system3} 
\begin{cases}
-\Delta u_i = \mu_iQ_1(x)|u_i|^{2p-2}u_i + \sum\limits_{\substack{j=1 \\ j\neq i}}^\ell\lambda_{ij}|u_j|^p|u_i|^{p-2}u_i, \\
u_i\in E:=D^{1,2}(\rn)\cap L^{2p}(\rn),\quad u_i\neq 0,\quad i=1,\ldots,\ell.
\end{cases}
\end{equation}
As a consequence, for any $\delta>0$, 
$$\lim_{n\to\infty} \frac{\int_{|x|\leq\delta}(|\nabla v_{n,i}|^2+v_{n,i}^2)}{\irn (|\nabla v_{n,i}|^2+v_{n,i}^2)} = 1 \qquad \text{and} \qquad \lim_{n\to\infty} \frac{\int_{|x|\leq\delta}|v_{n,i}|^{2p}}{\irn |v_{n,i}|^{2p}} = 1.$$
\end{theorem}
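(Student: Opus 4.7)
The plan is to analyze the rescaled sequence $u_{n,i}(x):=\eps_n^{1/(p-1)}v_{n,i}(\eps_n x)$. Substituting into \eqref{eq:system} with $y_i=0$ and using $Q_{\eps_n}(\eps_n x)=Q_1(x)$ shows that $\mathbf{u}_n$ solves
\begin{equation*}
-\Delta u_{n,i} + \eps_n^2 u_{n,i} = \mu_i Q_1(x)|u_{n,i}|^{2p-2}u_{n,i} + \sum_{j\ne i}\lambda_{ij}|u_{n,j}|^p|u_{n,i}|^{p-2}u_{n,i},\qquad i=1,\ldots,\ell,
\end{equation*}
on $\rn$. Let $\cJ_n$ denote the associated energy functional and $\cJ_0$ the functional of \eqref{eq:system3}; they differ only by the perturbation $\tfrac{\eps_n^2}{2}\sum_i|u_{n,i}|_2^2$. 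Let $c_n,c_0$ be the corresponding least energy levels on the Nehari sets $\cN_n,\cN_0$; the change of variables places $\mathbf{u}_n$ on $\cN_n$ with $\cJ_n(\mathbf{u}_n)=c_n$, and the analog of Theorem~\ref{thm:multiplicity} applied to \eqref{eq:system3} yields a nonnegative least energy solution $\mathbf{u}^*\in\cN_0$ of the limit system.

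First I would establish the comparison $c_n\to c_0$. For the upper bound, project $\mathbf{u}^*$ onto $\cN_n$ via a componentwise scalar rescaling whose parameters tend to $1$, and note $\eps_n^2|\mathbf{u}^*|_2^2\to 0$ (since $\mathbf{u}^*\in H^1(\rn)^\ell$); this gives $c_n\le c_0+o(1)$. Combined with the Nehari identity, this bounds $(\mathbf{u}_n)$ in $E^\ell$, so after passing to a subsequence $\mathbf{u}_n\rightharpoonup\mathbf{u}$ in $E^\ell$. Testing the rescaled equation against $\vp\in C_c^\infty(\rn)$, the term $\eps_n^2\int u_{n,i}\vp$ vanishes thanks to the $L^{2^*}_{\mathrm{loc}}$ bound supplied by $D^{1,2}$, and the nonlinear terms pass to the limit by local compactness, so $\mathbf{u}$ is a weak solution of \eqref{eq:system3}.

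The main obstacle is to show that each component $u_i$ of the weak limit is nontrivial. Once this is secured, $\mathbf{u}\in\cN_0$ gives $\cJ_0(\mathbf{u})\ge c_0$ by minimality, and a Brezis-Lieb decomposition together with the matching upper bound forces strong convergence $\mathbf{u}_n\to\mathbf{u}$ in $E^\ell$ and identifies $\mathbf{u}$ as a nonnegative least energy solution of \eqref{eq:system3}. For nontriviality, testing the $i$-th equation of the rescaled system against $u_{n,i}$ and using $\lambda_{ij}<0$ yields
\begin{equation*}
\int|\nabla u_{n,i}|^2 \le \int|\nabla u_{n,i}|^2 + \eps_n^2\int u_{n,i}^2 \le \mu_i\int_{B_1(0)}|u_{n,i}|^{2p},
\end{equation*}
and a Sobolev-Hölder estimate on $B_1(0)$ (valid since $2p<2^*$) turns this into a uniform lower bound $\int_{B_1(0)}|u_{n,i}|^{2p}\ge c>0$, independent of $n$. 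The compact embedding $H^1(B_1(0))\hookrightarrow L^{2p}(B_1(0))$ then transfers this lower bound to $u_i$ in the limit, giving $u_i\not\equiv 0$.

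With strong convergence in $E^\ell$ in hand, the concentration formulas follow by unwinding the rescaling. For the $L^{2p}$-ratio,
\begin{equation*}
\frac{\int_{|y|\le\delta}|v_{n,i}|^{2p}\,dy}{\int_\rn|v_{n,i}|^{2p}\,dy} = \frac{\int_{|x|\le\delta/\eps_n}|u_{n,i}|^{2p}\,dx}{\int_\rn|u_{n,i}|^{2p}\,dx}\longrightarrow 1,
\end{equation*}
since $\delta/\eps_n\to\infty$ and $u_{n,i}\to u_i$ strongly in $L^{2p}(\rn)$. The $H^1$-ratio is handled analogously: under the rescaling, $\int|\nabla v_{n,i}|^2$ scales by $\eps_n^{N-2-2/(p-1)}$ while $\int v_{n,i}^2$ scales by $\eps_n^{N-2/(p-1)}$, so the $L^2$ contribution carries an extra factor $\eps_n^2$ relative to the gradient contribution and is negligible at leading order, reducing the $H^1$-ratio to the same argument as for the gradient via strong convergence in $D^{1,2}$.
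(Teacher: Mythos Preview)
Your overall strategy is close to the paper's, but there is a genuine gap in the upper bound step. You claim $\mathbf{u}^*\in H^1(\rn)^\ell$ in order to project it onto $\cN_n$ and to assert $\eps_n^2|\mathbf{u}^*|_2^2\to 0$, but a least energy solution of~\eqref{eq:system3} lives only in $E^\ell=(D^{1,2}(\rn)\cap L^{2p}(\rn))^\ell$ and there is no reason it should lie in $L^2(\rn)^\ell$. The only general decay available (Proposition~\ref{prop:decay}) is $u_i^*(x)\le\kappa|x|^{2-N}$ for $|x|\ge 1$, which is \emph{not} square-integrable when $N=3$ or $N=4$. Without $L^2$ control, $\|\mathbf{u}^*\|_{\eps_n}$ is not finite and the projection onto $\cN_n\subset(H^1(\rn))^\ell$ is undefined. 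The fix is standard: replace $\mathbf{u}^*$ by an element $\mathbf{v}\in\cN_0\cap\cC_c^\infty(\rn)^\ell$ with $\cJ_0(\mathbf{v})<c_0+\delta$ (density of $\cC_c^\infty$ in $E$ plus continuity of the Nehari projection), and run your sandwich argument with $\mathbf{v}$ to conclude $\limsup_n c_n\le c_0$.

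The paper organizes things slightly differently: it first obtains only a crude uniform bound $c_{\eps}\le d_1$ via disjointly supported test functions in $B_1(0)$ (Lemma~\ref{lem:bounds single core}), extracts a weak limit $\mathbf{u}$ solving~\eqref{eq:system3} with nontrivial components, and proves strong $E$-convergence directly by a Fatou chain on the nonnegative pieces $\int_{|x|\ge 1}\mu_i|u_{n,i}|^{2p}$ and $|\lambda_{ij}|\int|u_{n,j}|^p|u_{n,i}|^p$ of the Nehari identity; only afterward does it show $\cJ_0(\mathbf{u})=c_0$ by contradiction, projecting a $\cC_c^\infty$ competitor onto $\cN_{\eps_n}$. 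Your route, once patched, is a bit more streamlined in that the sharp upper bound $\limsup_n c_n\le c_0$ merges least-energy identification and strong $D^{1,2}$-convergence into a single sandwich $c_0\le\cJ_0(\mathbf{u})\le\liminf c_n\le\limsup c_n\le c_0$. However, your ``Brezis--Lieb'' sentence for the $L^{2p}$ part is too vague: the energy sandwich only controls the $D^{1,2}$ norm, and to upgrade to strong $L^{2p}$-convergence you will still need a Fatou-type argument on those same nonnegative pieces (exactly what the paper does), since the Nehari identities involve the \emph{signed} integral $\int\mu_iQ_1|u_{n,i}|^{2p}$ together with the coupling terms. A further small gloss: the claim that the upper bound ``bounds $(\mathbf{u}_n)$ in $E^\ell$'' via the Nehari identity gives only the $D^{1,2}$ bound; the $L^{2p}$ bound requires the extra observation~\eqref{2pbd}.
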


Here we see again that the solutions concentrate, but now the components remain  \emph{coupled} and the limit profile is a solution of \emph{the limit system}~\eqref{eq:system3}, i.e., the effect of the repelling forces $\lambda_{ij}$ remains.

Next, we analyse the effect of the repelling forces on the limit system~\eqref{eq:system3}. Similar autonomous systems have been widely studied and it is, for instance, well known that least energy solutions exhibit phase separation as the repelling forces $\lambda_{i,j}$ increase. We show that this is also true for the limit system~\eqref{eq:system3}. We shall write $B_r(x)$ for the open ball of radius $r$ and center at $x$.

\begin{theorem}\label{seg:thm}
For each $i,j=1,\ldots,\ell$, $i\neq j$, let $(\lambda_{k})$ be a sequence of negative numbers such that $\lambda_{k}\to -\infty$ as $k\to\infty$, and  let $\bf u_{k}=(u_{k,1},\ldots,u_{k,\ell})$ be a nonnegative least energy solution of the limit system~\eqref{eq:system3} with $\lambda_{ij}=\lambda_{k}$ for all $i\neq j$. Then, after passing to a subsequence, we have that $u_{k,i}\to u_{\infty,i}$ strongly in $E$,\, $u_{\infty,i}\geq 0$,\, $u_{\infty,i}u_{\infty,j}=0$ if $i\neq j$, \ $u_{\infty,i}$ is continuous in $\mathbb{R}^N$, and $u_{\infty,i}|_{\Omega_i}$ is a least energy solution to the problem
\begin{equation}\label{eq:omega_i}
\begin{cases}
-\Delta w = \mu_iQ_1(x)|w|^{2p-2}w,\\
w\in E_i:=D_0^{1,2}(\Omega_i)\cap L^{2p}(\Omega_i),\quad w\neq 0,
\end{cases}
\end{equation}
where $\Omega_i:=\{x\in\mathbb{R}^{N}:u_{\infty,i}(x)>0\}$ for each $i=1,\ldots,\ell$. Hence, $\o_i\cap\o_j=\emptyset$ if $i\neq j$ and $\o_i\cap B_1(0)\neq\emptyset$.
\end{theorem}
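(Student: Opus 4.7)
The plan is to proceed in three stages: derive uniform $E$-bounds and a.e.\ segregation for $(\mathbf{u}_k)$; upgrade the a.e.\ separation to a continuous one via uniform regularity estimates for strongly competing systems; and identify each component as a least-energy solution of the corresponding scalar problem by a two-sided energy comparison.

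First, I bound the least-energy levels $c_k$ of~\eqref{eq:system3} independently of $k$ by a $k$-independent competitor: pick pairwise disjoint balls $B_r(x_i)\subset B_1(0)$ and nonzero nonnegative $\varphi_i\in C_c^\infty(B_r(x_i))$; since $Q_1\equiv 1$ there, a suitable scaling $t_i\varphi_i$ lies on the Nehari manifold of the scalar problem~\eqref{eq:limiting}, and by disjointness of supports $(t_1\varphi_1,\dots,t_\ell\varphi_\ell)$ automatically lies on the Nehari manifold of~\eqref{eq:system3} for \emph{every} $k$, with $k$-independent system energy. The identity $c_k=\tfrac{p-1}{2p}\sum_i\|u_{k,i}\|^2$ then bounds each $\|u_{k,i}\|_{D^{1,2}}$, and testing~\eqref{eq:system3} against $u_{k,i}\eta$, with $\eta$ a cutoff vanishing on $B_1(0)$ and $\equiv 1$ outside $B_2(0)$, controls $\int_{\rn\setminus B_2}|u_{k,i}|^{2p}$ (both $Q_1=-1$ and $\lambda_k<0$ contribute with favorable signs); Sobolev embedding on $B_2(0)$ closes the bound, giving $|u_{k,i}|_{2p}\leq C$. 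Passing to a subsequence, $u_{k,i}\rightharpoonup u_{\infty,i}$ in $E$, strongly in $L^{2p}_{\mathrm{loc}}(\rn)$ and a.e., with $u_{\infty,i}\geq 0$; the Nehari identity rewritten as
\[
|\lambda_k|\sum_{j\ne i}\irn|u_{k,j}|^p|u_{k,i}|^p=\mu_i\irn Q_1|u_{k,i}|^{2p}-\irn|\nabla u_{k,i}|^2\leq C
\]
then forces $\irn|u_{k,i}|^p|u_{k,j}|^p\to 0$, so by Fatou $u_{\infty,i}u_{\infty,j}=0$ a.e.\ for $i\ne j$.

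The main obstacle is to upgrade this a.e.\ segregation to a continuous one. Subcriticality $2p<2^*$ together with Moser / Brezis--Kato iteration gives uniform $L^\infty_{\mathrm{loc}}$ bounds on the $u_{k,i}$, and I would then invoke the uniform H\"older regularity estimates for strongly competing elliptic systems due to Conti--Terracini--Verzini and Noris--Tavares--Terracini--Verzini: $\|u_{k,i}\|_{C^{0,\alpha}(K)}\leq C(K)$ on every compact $K\subset\rn$, for some $k$-independent $\alpha\in(0,1)$. Arzel\`a--Ascoli then upgrades convergence to locally uniform, so $u_{\infty,i}$ is continuous, $\Omega_i:=\{u_{\infty,i}>0\}$ is open, and the $\Omega_i$ are pairwise disjoint. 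Moreover, taking $\liminf$ of the Nehari identity for $u_{k,i}$---with strong $L^{2p}(B_1(0))$ convergence on $B_1(0)$ (where $Q_1=+1$) and Fatou on its complement (where $Q_1=-1$), both cooperating in our favor---yields the subsolution bound $\|u_{\infty,i}\|^2\leq\mu_i\irn Q_1 u_{\infty,i}^{2p}$, so $u_{\infty,i}$ can be rescaled by a unique $t_{*,i}\in(0,1]$ onto the scalar Nehari manifold $\mathcal{N}_i(\Omega_i)$ of~\eqref{eq:omega_i}.

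Finally, non-emptiness $\Omega_i\cap B_1(0)\ne\emptyset$, strong $E$-convergence, and the least-energy property all follow from a two-sided energy comparison. Using the $\ell$-tuple $(t_{*,i}u_{\infty,i})_i$ (extended by zero) as competitor on the system Nehari manifold---admissible since the $\Omega_i$ are pairwise disjoint, so the coupling vanishes---gives $c_k\leq\tfrac{p-1}{2p}\sum_i t_{*,i}^2\|u_{\infty,i}\|^2\leq\tfrac{p-1}{2p}\sum_i\|u_{\infty,i}\|^2$. Combined with the LSC lower bound $\liminf_k c_k\geq\tfrac{p-1}{2p}\sum_i\|u_{\infty,i}\|^2$, this forces equality throughout: $t_{*,i}=1$ for each $i$, so $u_{\infty,i}\in\mathcal{N}_i(\Omega_i)$, and $\sum_i\|u_{k,i}\|^2\to\sum_i\|u_{\infty,i}\|^2$ upgrades weak to strong convergence in $E$. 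A standard contradiction argument then shows that $u_{\infty,i}$ minimizes $I_{\mu_i,\Omega_i}$ on $\mathcal{N}_i(\Omega_i)$: any $w\in\mathcal{N}_i(\Omega_i)$ with strictly smaller energy than $u_{\infty,i}$ would produce, via disjoint-support replacement of the $i$-th component, a system competitor with energy strictly less than $\lim c_k$, contradicting $c_k\to\lim c_k$. Hence $u_{\infty,i}|_{\Omega_i}$ is a least-energy solution of~\eqref{eq:omega_i}. For the non-emptiness: the standard Nehari lower bound $\|u_{k,i}\|\geq\delta>0$ combined with $\|u_{k,i}\|^2\leq\mu_i\int_{B_1(0)}|u_{k,i}|^{2p}$ (coming from the Nehari identity, since $Q_1\leq 0$ outside $B_1(0)$ and the coupling is nonpositive) forces $\int_{B_1(0)}|u_{k,i}|^{2p}\geq\delta'>0$, and strong $L^{2p}(B_1(0))$ convergence transfers this mass to $u_{\infty,i}$.
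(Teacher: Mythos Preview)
Your proposal is correct and follows essentially the same route as the paper: a uniform energy bound via disjoint-support competitors inside $B_1(0)$, a.e.\ segregation from the Nehari identity and $|\lambda_k|\to\infty$, the Fatou-based subsolution inequality $\|u_{\infty,i}\|^2\leq\mu_i\int Q_1|u_{\infty,i}|^{2p}$, and the two-sided sandwich $c_*\leq\tfrac{p-1}{2p}\sum_i t_{*,i}^2\|u_{\infty,i}\|^2\leq\liminf c_k\leq c_*$ to force $t_{*,i}=1$ and strong convergence. Two small points to tighten: convergence of $\sum_i\|u_{k,i}\|^2$ gives strong $D^{1,2}$-convergence but not directly strong $L^{2p}$-convergence---the paper recovers the latter from the equality case of the Fatou chain on $\{|x|\geq 1\}$ combined with local $L^{2p}$-convergence---and for general $p$ the uniform H\"older estimate you need is the one in Soave--Tavares--Terracini--Zilio (the NTTV result you cite is for the cubic coupling $p=2$).
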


This type of segregation for other systems has been established in \cite{ctv}; see also the surveys \cite{sttz, t} and the references therein.

In the next result we show that concentration occurs for all solutions of~\eqref{eq:system} as $\eps_n\to 0$, not only for least energy ones.

\begin{theorem} \label{thm:concentr}
Let $\eps_n\to 0$ and let $\bf v_n=(v_{n,1},\ldots,v_{n,\ell})$ be a  solution to the system~\eqref{eq:system} with $\eps=\eps_n$. Then~\eqref{q} holds true for each $\delta>0$.
\end{theorem}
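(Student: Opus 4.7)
The plan is to reduce the concentration to iterating a Caccioppoli-type tail estimate that exploits the subcriticality $p<2^*/2$, and then to compare the resulting tail against the total $L^{2p}$ mass of each $v_{n,i}$, which a Nehari-type identity keeps bounded away from zero.

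First I would test the $i$-th equation of \eqref{eq:system} with $\eta_R^2 v_{n,i}$, where $\eta_R$ is a smooth cutoff vanishing in $B_{R/2}(y_i)$ and equal to $1$ outside $B_R(y_i)$, with $|\nabla\eta_R|\le C/R$. For $R\ge 2\eps_n$ the support of $\eta_R$ lies outside $B_{\eps_n}(y_i)$, so $Q_{\eps_n}(x-y_i)\eta_R^2=-\eta_R^2$; dropping the nonnegative coupling contribution (recall $\lambda_{ij}<0$) and using Young's inequality to absorb the cross term $2\int\eta_R v_{n,i}\nabla\eta_R\cdot\nabla v_{n,i}$ yields
\[
\int_{|x-y_i|>R}\bigl(|\nabla v_{n,i}|^2+v_{n,i}^2+|v_{n,i}|^{2p}\bigr)\le \frac{C}{R^2}\int_{R/2<|x-y_i|<R}v_{n,i}^2.
\]
Hölder with conjugate exponents $p,\,p/(p-1)$ on the annulus (using $v_{n,i}\in L^{2p}$) then upgrades this to the tail inequality $M_R\le CR^{-\beta}M_{R/2}^{1/p}$, where $M_R:=\int_{|x-y_i|>R}|v_{n,i}|^{2p}$ and $\beta:=2-N(p-1)/p$; the positivity of $\beta$ is exactly the assumption $p<2^*/2=N/(N-2)$, and $C$ is independent of $n$.

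In the rescaled variable $w_{n,i}(x):=\eps_n^{1/(p-1)}v_{n,i}(\eps_n x+y_i)$ the inequality becomes scale invariant, $\bar M_{2\rho}\le C\rho^{-\beta}\bar M_\rho^{1/p}$ for $\rho\ge 2$, with $\bar M_\rho:=\int_{|x|>\rho}|w_{n,i}|^{2p}$. Iterating $k$ times gives $\bar M_{2^{k+1}}\le C_0\,2^{-\beta kp/(p-1)}\bar M_2^{1/p^k}$ with $C_0$ universal. Choosing $k=k_n\sim\log_2(\delta/\eps_n)\to\infty$ and using $\bar M_2\le c_n:=|w_{n,i}|_{2p}^{2p}$, then returning to $v$-coordinates, produces
\[
\frac{M_\delta}{|v_{n,i}|_{2p}^{2p}}\le C\Bigl(\frac{\eps_n}{\delta}\Bigr)^{\beta p/(p-1)}\,c_n^{\,1/p^{k_n}-1}.
\]

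To close the argument I would combine the Nehari identity $\|v_{n,i}\|^2+|v_{n,i}|_2^2=\mu_i\int Q_{\eps_n}(x-y_i)|v_{n,i}|^{2p}+\sum_{j\ne i}\lambda_{ij}\int|v_{n,j}|^p|v_{n,i}|^p$ with the Sobolev embedding applied on $B_{\eps_n}(y_i)$: this yields $\|v_{n,i}\|^2\ge c\,\eps_n^{N-2p/(p-1)}\to\infty$ and $c_n\ge c'>0$, together with the Hölder--Sobolev bound $|v_{n,i}|_{2p}^{2p}\le C\eps_n^{N-p(N-2)}I_n^{p}$, so that $|v_{n,i}|_{2p}^{2}=o(I_n)$ where $I_n:=\|v_{n,i}\|^2+|v_{n,i}|_2^2$. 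The factor $(\eps_n/\delta)^{\beta p/(p-1)}\to 0$ is immediate; since $1/p^{k_n}-1\to -1$ and $c_n$ is bounded below, the second factor stays bounded (and in fact tends to $0$ if $c_n\to\infty$), so $M_\delta/|v_{n,i}|_{2p}^{2p}\to 0$, which is the second limit in \eqref{q}. Feeding $M_{\delta/2}^{1/p}=o(|v_{n,i}|_{2p}^{2})=o(I_n)$ back into the Caccioppoli inequality produces the first limit. The main obstacle is the absence of any a priori energy bound: a general sequence of solutions may have $\|w_{n,i}\|_E$ unbounded, so concentration-compactness is not directly available; the iteration sidesteps this because the possibly unbounded $c_n$ enters only through the exponent $1/p^{k_n}\to 0$, while the decay $(\eps_n/\delta)^{\beta p/(p-1)}$ is universal, and the subcriticality $p<2^*/2$ is precisely what keeps $\beta>0$ and the iteration contractive.
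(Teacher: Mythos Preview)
Your approach is correct and genuinely different from the paper's. Both begin by testing the $i$-th equation against a cutoff localized outside $B_{\delta/2}(y_i)$ (the paper uses $\chi v_{n,i}$, you use $\eta_R^2 v_{n,i}$), obtaining the same Caccioppoli-type inequality. From there the arguments diverge. The paper normalizes $z_{n,i}:=v_{n,i}/\|v_{n,i}\|_1$, shows by a short Fatou argument that $z_{n,i}\rightharpoonup 0$ in $H^1(\rn)$, and then invokes Rellich on the fixed annulus $\{\delta/2<|x-y_i|<\delta\}$ to send the right-hand side to zero after dividing by $\|v_{n,i}\|_1^2$; this is soft and very short. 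You instead iterate the tail inequality at dyadic scales between $\eps_n$ and $\delta$, exploiting the subcritical exponent $\beta=2-N(p-1)/p>0$ to make the iteration contractive, and obtain the quantitative rate $M_\delta/|v_{n,i}|_{2p}^{2p}=O\big((\eps_n/\delta)^{2p/(p-1)-N}\big)$. The paper's route is more elementary (no iteration, no rescaling), while yours is constructive and yields an explicit concentration rate; in particular your observation that the possibly unbounded total mass $c_n$ enters only through the harmless exponent $1/p^{k_n}\to 0$ is the key device that replaces the paper's weak-convergence argument.

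Two minor points. First, your claim $|v_{n,i}|_{2p}^{2p}\le C\eps_n^{N-p(N-2)}I_n^p$ for the \emph{total} $L^{2p}$ mass needs the Nehari identity to bound the exterior part $M_{\eps_n}$ by the interior part $\int_{B_{\eps_n}(y_i)}|v_{n,i}|^{2p}$, which you don't state explicitly; once this is said, your stronger claim $|v_{n,i}|_{2p}^2=o(I_n)$ follows. Second, for the first limit in \eqref{q} you actually only need $|v_{n,i}|_{2p}^2=O(I_n)$, which is immediate from the Sobolev embedding $H^1\hookrightarrow L^{2p}$, so the argument closes even without the sharper estimate.
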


Theorems~\ref{thm:uncoupled},~\ref{thm:coupled} and~\ref{thm:concentr} extend to systems the results of Ackermann and Szulkin \cite{as} who showed that, for a single equation, the solutions concentrate at a point as $\eps\to 0$. A thorough discussion of the physical background in the setting of electromagnetic waves, and related references to literature in physics are also given in \cite{as}. In some of this literature also systems of equations have been discussed. The existence of a limit profile for the single equation, satisfying equation~\eqref{eq:limiting}, was established by Fang and Wang in \cite{fw}. In \cite{chs}, positive and nodal solutions to~\eqref{eq:limiting} are studied, including their symmetries and decay properties. We note that, for simplicity and to explain our main ideas in a transparent way, we have considered only a simple shape for the region of attraction (a ball, see~\eqref{Qdef}), but our arguments can be easily extended to consider more general regions as in \cite{chs}.

We also mention the paper \cite{liu}, where the case of a single equation with a \emph{sublinear} power ($2p\in (1,2)$) is studied. Moreover, a system of equations similar to ours has been considered in \cite{zz} and \cite{jsx}. In these two papers $\ell=2$, $\mu_i=0$ and $Q_\eps$ appears in front of the coupling terms. Also in this case, concentration occurs as $\eps\to 0$. 

There are several other systems that exhibit concentration. The study of the concentration behavior of solutions to the system of singularly perturbed elliptic equations
\begin{equation} \label{singular}
-\eps^2\Delta v_i + v_i = \mu_i|v_i|^{2p-2}v_i + \sum\limits_{\substack{j=1 \\ j\neq i}}^\ell\lambda_{ij }|v_j|^p|v_i|^{p-2}v_i, \qquad i= 1,\ldots,\ell,
\end{equation}
has aroused special interest, starting from the seminal paper by Lin and Wei \cite{lw}. In addition to its relevance in physics, understanding concentration has other interesting consequences. For example, it allows obtaining multiplicity of positive solutions to~\eqref{singular} in bounded domains, as recently shown in \cite{css}.

The paper is organized as follows. In Section~\ref{existence} we prove Theorem~\ref{thm:multiplicity}, where we adapt the arguments in \cite{csz}. Theorems~\ref{thm:uncoupled} and~\ref{thm:coupled} are proved in Sections~\ref{limiting} and~\ref{sec:single core} respectively. Section~\ref{segregation} is devoted to the proof of Theorem~\ref{seg:thm}. In Section~\ref{sec:conc} we prove Theorem~\ref{thm:concentr}. A uniform $L^\infty$-estimate is proved in Appendix.

\section{Existence of solutions} \label{existence}

Setting $u_i(x) := \eps^{\frac1{p-1}} v_i(\eps x)$ for $i=1,\ldots,\ell$, the system~\eqref{eq:system} is transformed to
\begin{equation} \label{eq:system2} 
\begin{cases}
-\Delta u_i + \eps^2u_i = \mu_i\what Q_{\eps,i}(x)|u_i|^{2p-2}u_i + \sum\limits_{\substack{j=1 \\ j\neq i}}^\ell\lambda_{ij}|u_j|^p|u_i|^{p-2}u_i, \\
u_i\in H^1(\rn), \quad u_i\ne 0,\quad i=1,\ldots,\ell,
\end{cases}
\end{equation}
where
\begin{equation*}
\what Q_{\eps,i}(x):=
\begin{cases}
1 &\text{if \ }|x-\frac{y_i}{\eps}|<1, \\
-1 &\text{if \ }|x-\frac{y_i}{\eps}|\geq 1.
\end{cases}
\end{equation*} 
Note that $\what Q_{\eps,i}(x) = Q_1(x-\frac{y_i}{\eps})$. Fix $\eps>0$. Set $\cH:=(H^1(\rn))^\ell$ and let the norm of $\bf u=(u_1,\ldots,u_\ell)\in\cH$ be given by 
\[
\|\bf u\|_\eps^2 := \|u_1\|_\eps^2 + \cdots + \|u_\ell\|_\eps^2,
\]
where
\[
\|u_i\|_\eps^2 := \irn(|\nabla u_i|^2 +\eps^2u_i^2), \quad i = 1,\ldots,\ell.
\]
The solutions to~\eqref{eq:system2} are the critical points with nontrivial components of the $\cC^1$-functional $\cJ_\eps:\cH\to\r$ given by
\[
\cJ_\eps(\bf u) := \frac12\|\bf u\|_\eps^2 - \frac1{2p} \sum_{i=1}^\ell\irn \mu_i\what Q_{\eps,i}(x)|u_i|^{2p}  - \frac{1}{2p}\sum_{\substack{i,j=1 \\j\ne i}}^\ell\irn\lambda_{ij}|u_j|^p|u_i|^p.
\]
They belong to the Nehari-type set
\begin{equation*}
\cN_{\eps}:= \Big\{\bf u\in \cH: u_i\neq 0, \ \partial_i\cJ_\eps(\bf u)u_i=0 \ \text{for all} \ i = 1,\ldots,\ell\Big\}
\end{equation*}
introduced in \cite{ctv}. As 
$$\partial_i\cJ_\eps(\bf u)u_i=\|u_i\|_\eps^2 - \irn \mu_i\what Q_{\eps,i}(x)|u_i|^{2p}  - \sum_{\substack{i,j=1 \\j\ne i}}^\ell\irn\lambda_{ij}|u_j|^p|u_i|^p,$$
and $\lambda_{ij}<0$, for every $\bf u\in\cN_\eps$, we have that
\begin{equation}  \label{sobolev}
\|u_i\|_\eps^2 \leq \int_{|x-\frac{y_i}{\eps}|<1} \mu_i|u_i|^{2p} \le C_1\Big(\int_{|x-\frac{y_i}{\eps}|<1}|u_i|^{2^*}\Big)^{2p/2^*} \leq C\Big(\irn |\nabla u_i|^2\Big)^{p}\leq C\|u_i\|_\eps^{2p},
\end{equation}
where the constants $C_1,C>0$ are independent of $\eps$ and $i$.

Hence,
\begin{equation}\label{lbd:sob}
0<C_0\leq \|u_i\|_\eps^2\leq\int_{|x-\frac{y_i}{\eps}|<1} \mu_i|u_i|^{2p}\qquad\text{if \ }\bf u=(u_1,\ldots,u_\ell)\in\cN_\eps,
\end{equation}
where $C_0$ is independent of $\eps$ and $i$. This shows that $\cN_\eps$ is a closed subset of $\cH$. Furthermore, as
\begin{equation}\label{eq:lower bound}
\mathcal{\cJ}_\eps(\bf u) = \frac{p-1}{2p} \|\bf u\|_\eps^2\qquad \text{for all \ }\bf u \in\mathcal{N}_\eps,
\end{equation}
we have that
\begin{equation}\label{eq:infimum}
c_\eps:=\inf_{\cN_\eps}\cJ_\eps\geq\ell C_0=:a_0 \quad \text{for all \ }\eps>0.
\end{equation}
For $\bf t= (t_1,\ldots,t_\ell)\in(0,\infty)^\ell$ and $\bf u\in \cH$ we write  $ \bf t\bf u := (t_1u_1,\ldots,t_\ell u_\ell)$ and define $I_{\eps,\bf u}:(0,\infty)^\ell\to\r$ by
\begin{equation*}
I_{\eps,\bf u}(\bf t) :=\cJ_\eps(\bf t\bf u) = \frac{1}{2}\sum_{i=1}^\ell a_{\eps,\bf u,i} t_i^2 - \frac{1}{2p}\sum_{i=1}^\ell b_{\eps,\bf u,i}t_i^{2p} - \frac{1}{2p}\sum_{\substack{i,j=1 \\ j\ne i}}^\ell d_{\bf u,ij}t_j^pt_i^p,
\end{equation*}
where
$$a_{\eps,\bf u,i}:=\|u_i\|_\eps^2,\qquad b_{\eps,\bf u,i}:=\irn\mu_i\what Q_{\eps,i}(x)|u_i|^{2p},\qquad d_{\bf u,ij}:=\irn\lambda_{ij}|u_j|^p|u_i|^p.$$
As $t_i\partial_i I_{\eps,\bf u}(\bf t)=\partial_i\cJ_\eps(\bf t\bf u)[t_iu_i]$, it follows that, if $u_i\neq 0$ for every $i$, then $\bf t$ is a critical point of $I_{\eps,\bf u}$ if and only if $\bf t\bf u\in\cN_\eps$. If such $\bf t$ exists, it is unique and is a global maximum of $I_{\eps,\bf u}$ (see \cite[Lemma 2.2]{csz} for the proof). Denote this $\bf t$ by $\bf t_{\eps,\bf u}$. Let
\[
S_\eps:=\{v\in H^1(\rn): \|v\|_\eps=1\}, \qquad \mathcal{T}_\eps:=S_\eps\times\cdots\times S_\eps \ (\ell \text{ times}),
\]
\[
\cU_\eps:= \{\mathbf{u}\in \mathcal{T}_\eps: \bf t\mathbf{u}\in \cN_{\eps}\text{ for some (and hence a unique) }\bf t \in(0,\infty)^\ell\}
\]
and let  $\bf m_\eps: \cU_\eps\to \cN_\eps$ be given by $\bf m_\eps(\bf u):=t_{\eps,\bf u}\bf u$.  

\begin{lemma} \label{bfu}\hspace{2em}
\begin{itemize}
\item[$(a)$]$\cU_\eps$ is a nonempty open subset of $\mathcal{T}_\eps$.
\item[$(b)$] $\bf{m}_\eps:\mathcal{U}_\eps\to\mathcal{N}_\eps$ is a homeomorphism and it is odd, i.e., $\bf m_\eps(-\bf u)=-\bf m_\eps(\bf u)$.
\item[$(c)$]If $(\bf u_n)$ is a sequence in $\cU_\eps$ such that $\bf u_n\to \bf u\in\partial\cU_\eps$, then $\|\bf{m}_\eps(\bf u_n)\|_\eps\to\infty$. 
\end{itemize}
\end{lemma}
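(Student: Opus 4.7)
The plan is to follow the scheme of \cite[Lemma 2.2]{csz}, with the extra input needed here because the coefficient $\widehat Q_{\eps,i}$ changes sign. For part (a), nonemptiness is witnessed by an explicit construction: pick $u_i\in H^1(\rn)\setminus\{0\}$ with pairwise disjoint supports, each contained in $B_1(y_i/\eps)$---always possible, since even when some of the $y_i/\eps$ coincide one may partition the common ball into $\ell$ nontrivial disjoint subregions---and normalize $\|u_i\|_\eps=1$. Then $\widehat Q_{\eps,i}\equiv 1$ on $\supp u_i$ yields $b_{\eps,\bf u,i}=\mu_i|u_i|_{2p}^{2p}>0$, while disjointness yields $d_{\bf u,ij}=0$ for $i\ne j$, so $I_{\eps,\bf u}$ decouples into $\ell$ scalar Nehari problems $t\mapsto\tfrac12 t^2-\tfrac{1}{2p}b_{\eps,\bf u,i}t^{2p}$, each admitting a unique positive maximizer; hence $\bf u\in\cU_\eps$. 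Openness reduces to showing that $\bf t_{\eps,\bf u}$ depends continuously on $\bf u$: the coefficients $a_{\eps,\bf u,i},b_{\eps,\bf u,i},d_{\bf u,ij}$ are continuous in $\bf u\in\cH$, and the uniqueness of the critical point combined with the a priori bounds from \eqref{lbd:sob} confines candidate maximizers to a compact subset of $(0,\infty)^\ell$, so a standard compactness argument delivers $\bf t_{\eps,\bf u'}$ near $\bf t_{\eps,\bf u}$ for all $\bf u'$ near $\bf u$.

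For part (b), I introduce $\bf n_\eps:\cN_\eps\to\cU_\eps$ defined by $\bf n_\eps(\bf w):=(w_1/\|w_1\|_\eps,\ldots,w_\ell/\|w_\ell\|_\eps)$, which is well-defined and continuous by \eqref{lbd:sob}. A direct computation using the uniqueness of $\bf t_{\eps,\bf u}$ shows $\bf n_\eps\circ\bf m_\eps=\mathrm{id}_{\cU_\eps}$ and $\bf m_\eps\circ\bf n_\eps=\mathrm{id}_{\cN_\eps}$, and continuity of $\bf m_\eps$ is precisely the continuous dependence already exploited in (a). For oddness, every term in $\cJ_\eps$ involves only even powers of $u_i$, so $I_{\eps,-\bf u}=I_{\eps,\bf u}$ and uniqueness forces $\bf t_{\eps,-\bf u}=\bf t_{\eps,\bf u}$, whence $\bf m_\eps(-\bf u)=-\bf m_\eps(\bf u)$.

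For part (c), I argue by contradiction: assume $(\|\bf m_\eps(\bf u_n)\|_\eps)$ is bounded and set $\bf t_n:=\bf t_{\eps,\bf u_n}$. Since $\|\bf m_\eps(\bf u_n)\|_\eps^2=\sum_i t_{n,i}^2$, the sequence $(\bf t_n)$ is bounded in $(0,\infty)^\ell$. The crucial input is \eqref{lbd:sob}: applied to each component $t_{n,i}u_{n,i}$ of $\bf m_\eps(\bf u_n)\in\cN_\eps$, it gives $t_{n,i}\ge\sqrt{C_0}$ for every $i,n$; after passing to a subsequence, $\bf t_n\to\bf t^*\in[\sqrt{C_0},\infty)^\ell\subset(0,\infty)^\ell$. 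Taking the limit in $\partial_i I_{\eps,\bf u_n}(\bf t_n)=0$ then yields $\bf t^*\bf u\in\cN_\eps$, hence $\bf u\in\cU_\eps$, contradicting $\bf u\in\partial\cU_\eps$ combined with the openness of $\cU_\eps$ from part (a). The only genuinely delicate step I anticipate is the openness argument in (a); once it is established, parts (b) and (c) follow cleanly from the uniqueness of $\bf t_{\eps,\bf u}$ and the uniform lower bound \eqref{lbd:sob}, which prevents the critical point from approaching the boundary of $(0,\infty)^\ell$.
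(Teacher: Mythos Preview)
Your approach is essentially the same as the paper's, which defers the bulk of the work to \cite{csz}: the nonemptiness construction in (a) matches exactly, and your self-contained arguments for (b) and (c) are correct reproductions of what \cite[Proposition~3.1]{csz} does. One small imprecision worth noting in your openness sketch: the bound \eqref{lbd:sob}, applied to $\mathbf t\mathbf u'\in\cN_\eps$ with $\|u'_i\|_\eps=1$, yields only the \emph{lower} bound $t_i\ge\sqrt{C_0}$ (and, from the inequality $t_i^2\le t_i^{2p}\int_{B_1(y_i/\eps)}\mu_i|u'_i|^{2p}$, again a lower bound), not the two-sided bound you need to confine candidate maximizers to a compact set. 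The clean route---which is what \cite[Lemma~2.3]{csz} packages---is to use that $\mathbf t_{\eps,\mathbf u}$ is a \emph{strict} global maximum of $I_{\eps,\mathbf u}$: on a small closed ball around $\mathbf t_{\eps,\mathbf u}$ the maximum of $I_{\eps,\mathbf u'}$ is attained in the interior for $\mathbf u'$ close to $\mathbf u$ (by continuity of the coefficients), producing an interior critical point, which by uniqueness is $\mathbf t_{\eps,\mathbf u'}$. You correctly flagged this as the delicate step; once openness is established this way, your arguments for (b) and (c) go through as written.
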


\begin{proof}
$(a):$ \ Choose $u_i\in H^1(\rn)$ such that $\|u_i\|_\eps^2=1$, $\supp(u_i)\subset B_1(\frac{y_i}{\eps})$ and $\supp(u_i)\cap\supp(u_j)=\emptyset$ if $i\neq j$. Then, $\bf u=(u_1,\ldots,u_\ell)\in\cT_\eps$ and
$$b_{\eps,\bf u,i}+\sum_{j\neq i}d_{\bf u,ij}=b_{\eps,\bf u,i}=\int_{|x-\frac{y_i}{\eps}|<1}|u_i|^{2p}>0.$$
It follows from \cite[Lemma 2.1]{csz} that $\bf u\in\cU_\eps$. As $a_{\eps,\bf u,i}, \ b_{\eps,\bf u,i}, \ d_{\bf u,ij}$ are continuous functions of $\bf u$, \cite[Lemma 2.3]{csz} implies that $\cU_\eps$ is open.

The proofs of the other statements are exactly the same as in \cite[Proposition 3.1]{csz}.
\end{proof}

Clearly $\cT_\eps$ is a $\cC^\infty$-Hilbert submanifold of $\cH$ of codimension $\ell$. Therefore $\cU_\eps$ is a $\cC^\infty$-submanifold of $\cH$. Define $\Psi_\eps:\cU_\eps\to\r$ by
\[
\Psi_\eps(\bf u) := \cJ_\eps(\bf t_{\eps, \bf u}\bf u) = \max_{\bf t\in(0,\infty)^\ell} \cJ_\eps(\bf t\bf u).
\]
If $\Psi_\eps$ is differentiable at $\bf u\in\mathcal{U}_\eps$, the norm of $\Psi_\eps'(\bf u)$ is given by
$$
\|\Psi_\eps'(\bf u)\|_*:=\sup\limits_{\substack{\bf v\in\mathrm{T}_{\bf u}(\mathcal{T_\eps}) \\ \bf v\neq 0}}\frac{|\Psi_\eps'(\bf u)\bf v|}{\|\bf v\|_\eps},
$$
where $\mathrm{T}_{\bf u}(\mathcal{T_\eps})$ is the tangent space to $\mathcal{T_\eps}$ at $\bf u$. A sequence $(\bf u_n)$ in $\mathcal{U_\eps}$ is called a $(PS)_c$\emph{-sequence for} $\Psi_\eps$ if $\Psi_\eps(\bf u_n)\to c$ and $\|\Psi'_\eps(\bf u_n)\|_*\to 0$, and $\Psi_\eps$ is said to satisfy the $(PS)_c$\emph{-condition} if every such sequence has a convergent subsequence.
As usual, a $(PS)_c$\emph{-sequence for} $\mathcal{J}_\eps$ is a sequence $(\bf u_n)$ in $\mathcal{H}$ such that $\mathcal{J}_\eps(\bf u_n)\to c$ and $\|\mathcal{J}_\eps'(\bf u_n)\|_{\mathcal{H}^{-1}}\to 0$, and $\mathcal{J}_\eps$ satisfies the $(PS)_c$\emph{-condition} if any such sequence has a convergent subsequence.

The following result is proved in \cite[Theorem 3.3]{csz}.

\begin{lemma} \label{psi}\hspace{2em}
\begin{itemize}
\item[$(i)$] $\Psi_\eps\in \mathcal{C}^1(\cU_\eps,\r)$ and
\begin{equation*}
\Psi_\eps'(\bf u)\bf v = \cJ_\eps'(\bf{m}_\eps(\bf u))[t_{\eps,\bf u}\bf v] \quad \text{for all } \bf u\in\mathcal{U}_\eps \text{ and } \bf v\in \mathrm{T}_{\bf u}(\mathcal{T}_\eps)
\end{equation*}
where $\mathrm{T}_{\bf u}(\mathcal{T}_\eps)$ is the tangent space to $\cT_\eps$ at $\bf u$.
\item[$(ii)$] If $(\bf u_n)$ is a $(PS)_c$-sequence for $\Psi_\eps$, then $(\bf{m}_\eps(\bf u_n))$ is a $(PS)_c$-sequence for $\cJ_\eps$. Conversely, if $(\bf u_n)$ is a $(PS)_c$-sequence for $\mathcal{J}_\eps$ and $\bf u_n\in\mathcal{N}_\eps$ for all $n\in\mathbb{N}$, then $(\bf{m}_\eps^{-1}(\bf u_n))$ is a $(PS)_c$-sequence for $\Psi_\eps$.
\item[$(iii)$] $\bf u$ is a critical point of $\Psi_\eps$ if and only if $\bf{m}_\eps(\bf u)$ is a critical point of $\cJ_\eps$.
\item[$(iv)$] If $(\bf u_n)$ is a sequence in $\cU_\eps$ such that $\bf u_n\to \bf u\in\partial\cU_\eps$, then $\Psi_\eps(\bf u_n) \to\infty$.
\item[$(v)$] $\Psi_\eps$ is even, i.e., $\Psi_\eps(\bf u)=\Psi_\eps(-\bf u)$ for every $\bf u\in\mathcal{U}_\eps$. 
\end{itemize}
\end{lemma}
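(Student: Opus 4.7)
The plan is to regard $\Psi_\eps$ as the composition $\cJ_\eps\circ\bf m_\eps$ of $\cJ_\eps$ with the projection $\bf m_\eps:\cU_\eps\to\cN_\eps$ from Lemma~\ref{bfu}, and systematically to exploit the fact that on $\cN_\eps$ the Nehari identities $\partial_i\cJ_\eps(\bf u)u_i=0$ hold by definition; these produce the cancellations that make both the derivative formula in (i) and the PS correspondence in (ii) work cleanly. For (i), I would apply the implicit function theorem to the equation $\nabla_{\bf t}I_{\eps,\bf u}(\bf t)=0$: the unique critical point $\bf t_{\eps,\bf u}$ is a strict maximum \cite[Lemma 2.2]{csz}, and a direct computation of the Hessian of $I_{\eps,\bf u}$ at $\bf t_{\eps,\bf u}$ confirms that it is negative definite, so $\bf u\mapsto\bf t_{\eps,\bf u}$ is $C^1$ on $\cU_\eps$ and $\Psi_\eps=\cJ_\eps\circ\bf m_\eps$ inherits $C^1$ regularity. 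To get the derivative formula, expand
\[
\Psi_\eps'(\bf u)\bf v=\cJ_\eps'(\bf t_{\eps,\bf u}\bf u)[\bf t_{\eps,\bf u}\bf v]+\sum_{i=1}^\ell(\partial_{\bf v}t_{\eps,\bf u,i})\,\partial_i\cJ_\eps(\bf t_{\eps,\bf u}\bf u)[u_i],
\]
and observe that each term in the sum is a scalar multiple of $\partial_i\cJ_\eps(\bf m_\eps(\bf u))[m_\eps(\bf u)_i]=0$.

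For (ii), let $(\bf u_n)\subset\cU_\eps$ be a $(PS)_c$-sequence for $\Psi_\eps$. By~\eqref{eq:lower bound} the sequence $\|\bf m_\eps(\bf u_n)\|_\eps^2=\frac{2p}{p-1}\Psi_\eps(\bf u_n)$ is bounded, so the coefficients $t_{n,i}:=t_{\eps,\bf u_n,i}=\|t_{n,i}u_{n,i}\|_\eps$ are uniformly bounded above; by~\eqref{lbd:sob} they are also bounded below by $\sqrt{C_0}$. Given $\bf w=(w_1,\ldots,w_\ell)\in\cH$, decompose each component as $w_i=t_{n,i}v_i+s_iu_{n,i}$ with $v_i\perp_\eps u_{n,i}$ (unique since $\|u_{n,i}\|_\eps=1$); then $\bf v:=(v_1,\ldots,v_\ell)\in\mathrm{T}_{\bf u_n}(\cT_\eps)$, and the two-sided bounds on $t_{n,i}$ give $\|\bf v\|_\eps\leq C\|\bf w\|_\eps$ uniformly in $n$. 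Using (i),
\[
\cJ_\eps'(\bf m_\eps(\bf u_n))[\bf w]=\Psi_\eps'(\bf u_n)\bf v+\sum_{i=1}^\ell s_i\,\partial_i\cJ_\eps(\bf m_\eps(\bf u_n))[u_{n,i}],
\]
and the sum vanishes because $\partial_i\cJ_\eps(\bf m_\eps(\bf u_n))[u_{n,i}]=t_{n,i}^{-1}\partial_i\cJ_\eps(\bf m_\eps(\bf u_n))[m_\eps(\bf u_n)_i]=0$ on $\cN_\eps$. Hence $\|\cJ_\eps'(\bf m_\eps(\bf u_n))\|_{\cH^{-1}}\to 0$ and $\cJ_\eps(\bf m_\eps(\bf u_n))=\Psi_\eps(\bf u_n)\to c$. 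The converse runs the same computation in reverse with identical bounds. Statement (iii) is the pointwise version: if $\Psi_\eps'(\bf u)=0$, the derivative formula gives vanishing of $\cJ_\eps'(\bf m_\eps(\bf u))$ on $\bf t_{\eps,\bf u}\mathrm{T}_{\bf u}(\cT_\eps)$, and the Nehari conditions supply the remaining $\ell$ radial directions, together spanning $\cH$.

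Finally, (iv) follows by combining Lemma~\ref{bfu}(c), which yields $\|\bf m_\eps(\bf u_n)\|_\eps\to\infty$, with~\eqref{eq:lower bound}: $\Psi_\eps(\bf u_n)=\frac{p-1}{2p}\|\bf m_\eps(\bf u_n)\|_\eps^2\to\infty$. Statement (v) is immediate from the oddness of $\bf m_\eps$ (Lemma~\ref{bfu}(b)) and the evenness of $\cJ_\eps$. The most delicate point in this plan is the tangential/radial decomposition in (ii): one must verify that the decomposition $w_i=t_{n,i}v_i+s_iu_{n,i}$ is well defined with coefficients uniformly controlled along the sequence, which hinges on the quantitative bounds $\sqrt{C_0}\leq t_{n,i}\leq C$; once these are in hand, the passage between PS conditions for $\Psi_\eps$ and $\cJ_\eps$ is essentially a uniform change of basis.
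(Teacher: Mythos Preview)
Your proposal is correct and essentially reconstructs the argument of \cite[Theorem~3.3]{csz}, which is precisely what the paper invokes in lieu of a proof. The implicit-function-theorem derivation of $(i)$, the tangential/radial decomposition for $(ii)$ with uniform two-sided bounds on the $t_{n,i}$ coming from~\eqref{lbd:sob} and~\eqref{eq:lower bound}, and the reductions of $(iii)$--$(v)$ to the earlier parts and to Lemma~\ref{bfu} all match the standard approach in that reference, so there is nothing to add.
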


As a consequence, we obtain the following result. Its proof is identical to that of \cite[Theorem 3.4]{csz}.

\begin{proposition}\label{prop:existence}\hspace{2em}
\begin{itemize}
\item[$(i)$]If $\Psi_\eps:\cU_\eps\to\r$ satisfies the $(PS)_c$-condition at $c:=\inf_{\cU_\eps}\Psi_\eps$, then the system~\eqref{eq:system2} has a least energy solution $\bf u=(u_1,\ldots,u_\ell)$ which is nonnegative, i.e., $\bf u\in\cN_\eps$, $\cJ_\eps(\bf u)=c_\eps$ and $u_i\ge 0$ for all $i=1,\ldots,\ell$.
\item[$(ii)$]If $\Psi_\eps:\cU_\eps\to\r$ satisfies the $(PS)_c$-condition at every $c\in\r$ and $\gen(\cU_\eps)=\infty$, then the system~\eqref{eq:system2} has an unbounded sequence of solutions in $\cN_\eps$.
\end{itemize}
\end{proposition}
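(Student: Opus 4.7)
The plan is to work on the manifold $\cU_\eps$ with the reduced functional $\Psi_\eps$, exploiting Lemma~\ref{psi} which converts critical point theory for $\Psi_\eps$ on $\cU_\eps$ into critical point theory for $\cJ_\eps$ on $\cN_\eps$. The key structural fact is Lemma~\ref{psi}(iv): $\Psi_\eps\to\infty$ on approaching $\partial\cU_\eps$, so sublevel sets $\{\Psi_\eps\le c\}$ are closed away from the boundary for every finite $c$; this is what allows genuine variational methods on the (open) manifold $\cU_\eps$.

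For part $(i)$, I would apply Ekeland's variational principle on $\cU_\eps$ to produce a minimizing sequence $(\bf u_n)\subset\cU_\eps$ with $\Psi_\eps(\bf u_n)\to c$ and $\|\Psi_\eps'(\bf u_n)\|_*\to 0$. Since $c\ge a_0>0$ by \eqref{eq:infimum}, Lemma~\ref{psi}(iv) keeps $(\bf u_n)$ bounded away from $\partial\cU_\eps$, so the assumed $(PS)_c$-condition yields $\bf u_n\to\bf u\in\cU_\eps$ (up to subsequence) with $\Psi_\eps(\bf u)=c$ and $\Psi_\eps'(\bf u)=0$. By Lemma~\ref{psi}(iii), $\bf m_\eps(\bf u)\in\cN_\eps$ is a critical point of $\cJ_\eps$, hence a least energy solution of~\eqref{eq:system2}. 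For nonnegativity, observe that the coefficients $a_{\eps,\bf u,i}$, $b_{\eps,\bf u,i}$, $d_{\bf u,ij}$ depend only on $|u_i|$ and $|u_j|$, so $|\bf u|:=(|u_1|,\ldots,|u_\ell|)\in\cU_\eps$ with $\bf t_{\eps,|\bf u|}=\bf t_{\eps,\bf u}$, and $\bf m_\eps(|\bf u|)$ is a nonnegative minimizer.

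For part $(ii)$, I would run a Lusternik--Schnirelmann/genus minimax on $\cU_\eps$, which is symmetric, and on the even functional $\Psi_\eps$ (Lemma~\ref{psi}(v)). Set
$$c_k:=\inf\Bigl\{\max_{\bf u\in A}\Psi_\eps(\bf u)\,:\, A\subset\cU_\eps\text{ compact},\ A=-A,\ \gen(A)\ge k\Bigr\},\qquad k\in\n.$$
Since $\gen(\cU_\eps)=\infty$, each class is nonempty, and $a_0\le c_1\le c_2\le\cdots$. A standard odd pseudogradient deformation combined with the $(PS)_{c_k}$-condition shows that each $c_k$ is a critical value of $\Psi_\eps$, and that $c_k\to\infty$ (otherwise sublevel sets below some fixed level would contain compact symmetric subsets of arbitrarily high genus, which—after deformation to the critical set—contradicts the compactness given by $(PS)$). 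Lemma~\ref{psi}(iii) then produces solutions $\bf v_k=\bf m_\eps(\bf u_k)\in\cN_\eps$ of \eqref{eq:system2} with $\cJ_\eps(\bf v_k)=c_k\to\infty$, and identity~\eqref{eq:lower bound} forces $\|\bf v_k\|_\eps\to\infty$.

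The main technical obstacle will be the deformation lemma for $(ii)$: one must construct an odd, locally Lipschitz pseudogradient vector field for $\Psi_\eps$ on the Hilbert submanifold $\cU_\eps$ whose negative flow stays inside $\cU_\eps$. This is precisely where Lemma~\ref{psi}(iv) is essential—it ensures that trajectories cannot reach $\partial\cU_\eps$ in finite time below a prescribed energy level, so the classical genus deformation argument goes through. Once this flow is in hand, both the Ekeland step in $(i)$ and the verification $c_k\to\infty$ in $(ii)$ reduce to the standard template already carried out in~\cite[Theorem~3.4]{csz}, and the symmetrization argument for nonnegativity is immediate.
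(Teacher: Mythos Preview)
Your proposal is correct and follows precisely the approach the paper adopts: the paper states that the proof is identical to that of \cite[Theorem~3.4]{csz}, which is exactly the Ekeland/genus-deformation template you outline, including the use of Lemma~\ref{psi}(iv) to keep flows inside $\cU_\eps$ and the $|\bf u|$-symmetrization for nonnegativity.
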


Here ``$\gen$" stands for the Krasnoselskii genus. Recall that if $X$ is a Banach space and $A$ is a subset such that $A=-A$, then $\gen(A)$ is the smallest integer $k$ with the property that there exists an odd map $h: A\to \r^k\smallsetminus\{0\}$. Moreover, $\gen(\emptyset)=0$, and if no $k$ as above exists, then $\gen(A)=\infty$. The properties of genus may be found e.g. in \cite{s}.

Next, we show that the hypotheses of Proposition~\ref{prop:existence} hold true.

\begin{lemma} \label{ps}
$\Psi_\eps:\cU_\eps\to\r$ satisfies the $(PS)_c$-condition at every $c\in\r$.
\end{lemma}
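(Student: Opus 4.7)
The plan is to use Lemma~\ref{psi}(ii) to reduce everything to showing that $\cJ_\eps$ satisfies the $(PS)_c$-condition on $\cN_\eps$. Given a $(PS)_c$-sequence $(\bf u_n)\subset\cU_\eps$ for $\Psi_\eps$, I set $\bf w_n:=\bf m_\eps(\bf u_n)\in\cN_\eps$; by Lemma~\ref{psi}(ii) this is a $(PS)_c$-sequence for $\cJ_\eps$. Boundedness of $(\bf w_n)$ in $\cH$ is immediate from identity \eqref{eq:lower bound}. Passing to a subsequence, $\bf w_n\rh\bf w$ weakly in $\cH$, strongly in $L^q_{\mathrm{loc}}(\rn)^\ell$ for each $q\in[1,2^*)$, and a.e. on $\rn$. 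Testing $\cJ_\eps'(\bf w_n)\to 0$ against compactly supported functions and invoking local compactness, one checks that $\bf w$ is a critical point of $\cJ_\eps$, i.e. $\cJ_\eps'(\bf w)=0$.

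Setting $\bf z_n:=\bf w_n-\bf w\rh 0$, the key step is to establish three Brezis--Lieb type splittings: $\|\bf w_n\|_\eps^2 = \|\bf z_n\|_\eps^2 + \|\bf w\|_\eps^2 + o(1)$; the weighted one, $\irn \mu_i\what Q_{\eps,i}|w_{n,i}|^{2p} = \irn \mu_i\what Q_{\eps,i}|w_i|^{2p} - \int_{\rn\setminus B_1(y_i/\eps)}\mu_i|z_{n,i}|^{2p}+o(1)$; and the mixed one, $\irn \lambda_{ij}|w_{n,j}|^p|w_{n,i}|^p = \irn \lambda_{ij}|w_j|^p|w_i|^p + \irn \lambda_{ij}|z_{n,j}|^p|z_{n,i}|^p+o(1)$. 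The first is just weak convergence in a Hilbert space. The second uses that on the bounded ball $B_1(y_i/\eps)$, where $\what Q_{\eps,i}=+1$, the embedding $H^1\hookrightarrow L^{2p}$ is compact (since $2p<2^*$), so $w_{n,i}\to w_i$ in $L^{2p}(B_1(y_i/\eps))$; on the complement $\what Q_{\eps,i}=-1$ and the usual Brezis--Lieb lemma applies. The third is the Brezis--Lieb lemma for products, which follows by expanding and using a.e. convergence together with uniform $L^{2p}$ bounds.

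Subtracting $\cJ_\eps'(\bf w)\bf w=0$ from $\cJ_\eps'(\bf w_n)\bf w_n=o(1)$ and inserting the three splittings yields $\|\bf z_n\|_\eps^2 + \sum_{i=1}^\ell\int_{\rn\setminus B_1(y_i/\eps)}\mu_i|z_{n,i}|^{2p} - \sum_{i\ne j}\irn\lambda_{ij}|z_{n,j}|^p|z_{n,i}|^p = o(1)$. Since $\lambda_{ij}<0$, every term on the left is nonnegative, so each tends to $0$; in particular $\bf z_n\to 0$ in $\cH$. Passing to the limit in \eqref{lbd:sob} gives $\|w_i\|_\eps^2\geq C_0>0$, so $\bf w\in\cN_\eps$, and the continuity of $\bf m_\eps^{-1}$ from Lemma~\ref{bfu}(b) transfers strong convergence back: $\bf u_n=\bf m_\eps^{-1}(\bf w_n)\to\bf m_\eps^{-1}(\bf w)\in\cU_\eps$.

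The main obstacle is the sign-indefinite weighted splitting: the weight $\what Q_{\eps,i}$ changes sign, so one cannot just apply Brezis--Lieb globally. The unlocking observation is that the ``attractive'' (positive) part of $\what Q_{\eps,i}$ lives on the bounded ball $B_1(y_i/\eps)$, where Sobolev compactness absorbs it, while on the unbounded complement the weight has the ``good'' sign $-1$, aligning with the competitive cross terms $\lambda_{ij}<0$; this sign structure produces a manifestly nonnegative sum in the final identity, forcing strong convergence without any splitting-at-infinity/concentration-compactness machinery.
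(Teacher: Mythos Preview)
Your argument is correct, but the route differs from the paper's. The paper avoids Brezis--Lieb entirely: after extracting a weak limit $\bf u$ and observing it solves the system, it bounds $\limsup_n\|u_{n,i}\|_\eps^2$ directly via the Nehari identity $\|u_{n,i}\|_\eps^2=\irn\mu_i\what Q_{\eps,i}|u_{n,i}|^{2p}+\sum_{j\ne i}\irn\lambda_{ij}|u_{n,j}|^p|u_{n,i}|^p$, splitting the first integral into the part over $B_1(y_i/\eps)$ (handled by local $L^{2p}$ compactness) and its complement. On the complement and on the coupling terms Fatou's lemma applies with the correct sign (since $\what Q_{\eps,i}=-1$ there and $\lambda_{ij}<0$), yielding $\limsup_n\|u_{n,i}\|_\eps^2\le\|u_i\|_\eps^2$, and weak lower semicontinuity closes the sandwich. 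This is a two-line argument once the sign structure is recognized. Your approach instead upgrades each Fatou inequality to a Brezis--Lieb equality with an explicit nonnegative residual, then assembles these into a single identity forcing $\bf z_n\to 0$. The substance is the same---both proofs hinge on the observation that the positive part of $\what Q_{\eps,i}$ is confined to a bounded set while every sign-indefinite piece (the exterior integral and the coupling) enters with the ``good'' sign---but the paper's Fatou route is shorter, while yours yields slightly more information (e.g.\ that the residual coupling and exterior $L^{2p}$ pieces each vanish separately). Your final step transferring convergence back to $\cU_\eps$ via $\bf m_\eps^{-1}$ is a point the paper leaves implicit.
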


\begin{proof}
Let $(\bf u_n)$ be a $(PS)_c$-sequence for $\mathcal{J}_\eps$ such that $\bf u_n=(u_{n,1},\ldots,u_{n,\ell})\in\mathcal{N}_\eps$ for all $n\in\mathbb{N}$. By Lemma~\ref{psi}, it suffices to show that $(\bf u_n)$ contains a convergent subsequence.

It follows from~\eqref{eq:lower bound} that $(\bf u_n)$ is bounded. Hence,  after passing to a subsequence, $\bf{u}_n \rh \bf u=(u_{1},\ldots,u_{\ell})$ weakly in $\cH$, $\bf{u}_n \to\bf u$ in $L^{2p}_{loc}(\rn)$ and $\bf u_n(x)\to \bf u(x)$ for a.e. $x\in\rn$. Therefore, $\bf u$ is a solution of the system~\eqref{eq:system2} and, using Fatou's lemma, we obtain
\begin{align*}
\|u_i\|_\eps^2 & \le \liminf_{n\to\infty}\|u_{n,i}\|_\eps^2\leq \limsup_{n\to\infty}\|u_{n,i}\|_\eps^2 \\
& =\limsup_{n\to\infty}\Big(\irn\mu_i\what Q_{\eps,i}(x)|u_{n,i}|^{2p}  + \sum_{j\ne i}\irn\lambda_{ij}|u_{n,j}|^p|u_{n,i}|^p\Big) \\
&\leq \lim_{n\to\infty}\int_{|x-\frac{y_i}{\eps}|<1}\mu_i|u_{n,i}|^{2p}-\liminf_{n\to\infty}\int_{|x-\frac{y_i}{\eps}|\geq 1}\mu_i|u_{n,i}|^{2p}-\sum_{j\ne i}\liminf_{n\to\infty}\irn|\lambda_{ij}||u_{n,j}|^p|u_{n,i}|^p \\
&\leq \int_{|x-\frac{y_i}{\eps}|<1}\mu_i|u_i|^{2p} - \int_{|x-\frac{y_i}{\eps}|\geq 1}\mu_i|u_i|^{2p}-\sum_{j\ne i}\irn|\lambda_{ij}||u_j|^p|u_i|^p \\
&=\irn \mu_i\what Q_{\eps,i}(x)|u_i|^{2p}  + \sum_{j\ne i}\irn\lambda_{ij}|u_j|^p|u_i|^p = \|u_i\|_\eps^2.
\end{align*}
This shows that $\lim_{n\to\infty}\|u_{n,i}\|_\eps = \|u_i\|_\eps$ and, as a consequence, $\bf u_n\to \bf u$ strongly in $\cH$. 
\end{proof}

\begin{lemma}\label{lem:genus}
$\gen(\cU_\eps)=\infty$.
\end{lemma}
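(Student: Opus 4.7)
The plan is to show that for every $k\in\n$ there is a compact symmetric subset of $\cU_\eps$ of Krasnoselskii genus at least $k$; this gives $\gen(\cU_\eps)\geq k$ for every $k$ and hence $\gen(\cU_\eps)=\infty$. The natural candidate is an oddly embedded copy of the sphere $S^{k-1}$ built from disjointly supported translates of a single function, mimicking the construction already used in the proof of Lemma~\ref{bfu}(a).

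Concretely, I would first pick centers $z_1,\ldots,z_\ell\in\rn$ and a radius $r>0$ small enough that the balls $B_r(z_i)$ are pairwise disjoint and satisfy $\overline{B_r(z_i)}\subset B_1(y_i/\eps)$ for each $i$. This is the one geometric step that requires a moment of care (especially when several $y_i$ coincide, as in Theorem~\ref{thm:coupled}), but it is elementary: each $B_1(y_i/\eps)$ has nonempty interior, so choosing distinct centers $z_i$ in the common region and a sufficiently small $r$ works. Next, fix a $k$-dimensional subspace $W\subset C_c^\infty(B_r(z_1))$; the associated ``unit sphere'' $S_W:=\{w\in W:\|w\|_\eps=1\}$ is homeomorphic to $S^{k-1}$ via any odd linear isomorphism, and in particular $\gen(S_W)=k$.

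The key step is to define $h:S_W\to\cH$ by
$$h(w):=\bigl(w,\,w(\cdot-(z_2-z_1)),\,\ldots,\,w(\cdot-(z_\ell-z_1))\bigr).$$
Since translation on $\rn$ is an isometry of $H^1(\rn)$, each component of $h(w)$ has $\|\cdot\|_\eps=1$, so $h$ maps into $\cT_\eps$. The $i$-th component is supported in $B_r(z_i)\subset B_1(y_i/\eps)$, and these supports are pairwise disjoint, so $d_{h(w),ij}=0$ whenever $i\neq j$, while $b_{\eps,h(w),i}=\mu_i|w|_{2p}^{2p}>0$ because $w\neq 0$. Consequently $b_{\eps,h(w),i}+\sum_{j\neq i}d_{h(w),ij}>0$ for every $i$, so \cite[Lemma 2.1]{csz} places $h(w)$ in $\cU_\eps$. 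Continuity is obvious, $h$ is injective (the first coordinate recovers $w$), and $h(-w)=-h(w)$ by linearity of translation. Hence $h(S_W)$ is a compact (therefore closed in $\cH$, and in $\cU_\eps$) symmetric subset of $\cU_\eps$, and standard monotonicity of genus under odd continuous maps yields $\gen(\cU_\eps)\geq\gen(h(S_W))=\gen(S^{k-1})=k$. Since $k$ is arbitrary, we are done; no genuine obstacle arises, the only mildly delicate issue being the initial choice of disjoint balls inside the possibly overlapping attraction regions.
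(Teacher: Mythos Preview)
Your argument is correct and follows the same strategy as the paper: build an odd continuous embedding of a $(k-1)$-sphere into $\cU_\eps$ using test functions with pairwise disjoint supports contained in the attraction balls $B_1(y_i/\eps)$, then invoke monotonicity of the genus. The only difference is in the implementation --- the paper chooses $k\ell$ mutually disjoint bump functions $u_{j,i}$ and parametrizes by a polytope $\mathbb{P}^{k-1}$, whereas you take a single $k$-dimensional linear subspace $W\subset C_c^\infty(B_r(z_1))$ and translate each $w\in S_W$ to the remaining $\ell-1$ balls; your version is a bit more economical (only $\ell$ disjoint supports are needed, not $k\ell$) but otherwise the arguments coincide.
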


\begin{proof}
Fix $k\geq 1$. For each $j=1,\ldots,k$ and $i=1,\ldots,\ell$ we choose $u_{j,i}\in H^1(\rn)$ such that $\|u_{j,i}\|_\eps=1$, $\supp(u_{j,i})\subset B_1(\frac{y_i}{\eps})$ and $\mathrm{supp}(u_{j,i})\cap \mathrm{supp}(u_{j',i'})=\emptyset$ if $(j,i)\neq (j',i')$.
Let $\{e_j : 1 \leq j \leq k\}$ be the standard basis of $\r^k$, and set
$$\mathbb{P}^{k-1}:= \Big\{\sum_{j=1}^k r_j \hat{e}_j : \hat{e}_j \in \{\pm e_j\}, \, r_j \in [0,1], \, \sum_{j=1}^k r_j = 1 \Big\}.$$
$\mathbb{P}^{k-1}$ is homeomorphic to the unit sphere $\mathbb{S}^{k-1}$ in $\r^k$ by an odd homeomorphism.
For each $i=1,\ldots,\ell$, let $\sigma_i:\mathbb{P}^{k-1}\to H^1(\rn)$ be given by $\sigma_i(e_j) := u_{j,i}$, \ $\sigma_i(-e_j):= -u_{j,i}$, and
$$\sigma_i\Big(\sum_{j=1}^k r_j \hat{e}_j \Big) := \frac{\sum_{j=1}^k r_j \sigma_i(\hat{e}_j)}{\|\sum_{j=1}^k r_j \sigma_i(\hat{e}_j)\|_\eps}.$$
Since $u_{j,i}$ and $u_{j',i'}$ have disjoint supports if $(j,i)\neq (j',i')$, these maps are well defined and $\mathrm{supp}(\sigma_i(z))\cap\mathrm{supp}(\sigma_{i'}(z))=\emptyset$ if $i\neq i'$ for every $z\in \mathbb{P}^{k-1}$. Arguing as in Lemma~\ref{bfu}$(a)$, we see that the map $\sigma:\mathbb{P}^{k-1}\to\cU_\eps$ given by $\sigma(z):=(\sigma_1(z),\ldots,\sigma_\ell(z))$ is well defined. As each $\sigma_i$ is continuous and odd, so is $\sigma$. By the monotonicity property of the genus we have that $\gen(\cU_\eps) \geq \gen(\mathbb{P}^{k-1})=k$. Since $k$ is arbitrary, the conclusion follows.
\end{proof}
\medskip

\begin{proof}[Proof of Theorem~\ref{thm:multiplicity}]
This follows immediately from Proposition~\ref{prop:existence} and Lemmas~\ref{ps} and~\ref{lem:genus}.
\end{proof}

\section{The limit profile of minimizers for different regions of attraction} \label{limiting}

Throughout this section we assume that $y_i\neq y_j$ if $i\neq j$. Recall that  $E:=D^{1,2}(\rn)\cap L^{2p}(\rn)$ is the Banach space whose norm is given by
$$\|u\|_E^2:=\|u\|^2+|u|_{2p}^2,\qquad\text{where \ } \ \|u\|^2:=\irn|\nabla u|^2\text{ \ and \ }|u|_{2p}^{2p}:=\irn|u|^{2p}.$$

The main objective of this section is to show the following.

\begin{theorem} \label{convergence}
Let $\eps_n\to 0$ and $\bf u_n=(u_{n,1},\ldots,u_{n,\ell})$ be a nonnegative least energy solution of the system~\eqref{eq:system2} with $\eps=\eps_n$. Then, after passing to a subsequence, $u_{n,i}(\,\cdot\,+\frac{y_i}{\eps_n})\to w_i$ strongly in $E$, where $w_i$ is a positive least energy solution to the equation~\eqref{eq:limiting}.
\end{theorem}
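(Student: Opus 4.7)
The plan is to translate each component to the origin and exploit the divergence of the shifts $(y_j-y_i)/\eps_n$ to decouple the system in the limit. Setting $\tilde u_{n,i}(x) := u_{n,i}(x + y_i/\eps_n)$, each $\tilde u_{n,i}$ satisfies
\[
-\Delta \tilde u_{n,i} + \eps_n^2 \tilde u_{n,i} = \mu_i Q_1(x)|\tilde u_{n,i}|^{2p-2}\tilde u_{n,i} + \sum_{j\neq i}\lambda_{ij}|\tilde u_{n,j}(\,\cdot\, - a_{n,ij})|^p|\tilde u_{n,i}|^{p-2}\tilde u_{n,i},
\]
with $a_{n,ij} := (y_j - y_i)/\eps_n \to \infty$ since $y_i \neq y_j$. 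If the coupling terms vanish in the limit, the weak limit $w_i$ will satisfy \eqref{eq:limiting}.

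First I would prove an upper energy bound of the form $\limsup c_{\eps_n} \leq \sum_i c_i^\infty$, where $c_i^\infty$ is the least energy level for \eqref{eq:limiting}. Take $\varphi_i \in C_c^\infty(B_1(0))$ close (in $E$) to realizing $c_i^\infty$, translate to $\varphi_{n,i} := \varphi_i(\,\cdot\, - y_i/\eps_n)$ so that for $n$ large the supports are pairwise disjoint, and project onto $\cN_{\eps_n}$ via Lemma~\ref{bfu}. Because the coupling integrals vanish on this test family, the projection decouples into $\ell$ scalar Nehari conditions and $\cJ_{\eps_n}$ tends to $\sum_i \cJ_i^\infty(\varphi_i)$, which can be made arbitrarily close to $\sum_i c_i^\infty$. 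By \eqref{eq:lower bound}, $\|\bf u_n\|_{\eps_n}$ is then uniformly bounded, so each $\tilde u_{n,i}$ is bounded in $D^{1,2}(\rn)$. To upgrade this to a bound in $E$ with uniform $L^{2p}$-tightness, I combine the Appendix's uniform $L^\infty$-estimate with a cut-off argument: testing the equation against $\eta_R^2 \tilde u_{n,i}$, where $\eta_R$ is supported in $\{|x|>R\}\subset\{Q_1=-1\}$, gives a favorable sign on $\mu_i\int \eta_R^2|\tilde u_{n,i}|^{2p}$, and after controlling the commutator and (separately, as below) the coupling contribution, one obtains $\int_{|x|>R}|\tilde u_{n,i}|^{2p} \to 0$ uniformly in $n$ as $R \to \infty$. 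Passing to a subsequence, $\tilde u_{n,i}\rh w_i$ weakly in $E$, strongly in $L^{2p}_{\mathrm{loc}}$, and pointwise a.e.

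To identify $w_i$ as a solution of \eqref{eq:limiting}, I test the translated equation against $\varphi\in C_c^\infty(\rn)$ and pass to the limit. The linear terms and the $Q_1$-nonlinear term behave well by weak/pointwise convergence and local compactness, and the $\eps_n^2\tilde u_{n,i}$ term vanishes because $\eps_n\to 0$. For the coupling term, the change of variables $z = x - a_{n,ij}$ turns the integrand into $|\tilde u_{n,j}(z)|^p$ integrated over $B_R(-a_{n,ij})$, which vanishes by the $L^{2p}$-tightness of $(\tilde u_{n,j})$ and $|a_{n,ij}|\to\infty$. Nontriviality of $w_i$ follows from \eqref{lbd:sob} applied to the translate, $\int_{B_1(0)}\mu_i|\tilde u_{n,i}|^{2p} \geq C_0$, and Rellich compactness, giving $\int_{B_1(0)}\mu_i|w_i|^{2p} \geq C_0 > 0$; the strong maximum principle then upgrades $w_i \geq 0$ to $w_i > 0$. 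Hence $w_i$ lies on the Nehari manifold of \eqref{eq:limiting} and $\cJ_i^\infty(w_i) \geq c_i^\infty$. By weak lower semicontinuity, combined with the vanishing of the mixed coupling energies in the limit, $\liminf c_{\eps_n} \geq \sum_i \cJ_i^\infty(w_i) \geq \sum_i c_i^\infty$. Together with the upper bound, this forces equalities everywhere: $\cJ_i^\infty(w_i)=c_i^\infty$ (so $w_i$ is a positive least energy solution of \eqref{eq:limiting}) and the componentwise energies converge, which yields $\tilde u_{n,i}\to w_i$ strongly in $E$.

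The main obstacle is establishing the uniform $L^{2p}$-tightness of $(\tilde u_{n,i})$ and leveraging it to show that the coupling terms vanish. Without tightness, mass could escape to infinity, simultaneously breaking the decoupling and obstructing the least-energy conclusion. The key technical ingredient should be the favorable sign of $Q_1$ on $\{|x|>1\}$, which, coupled with the uniform $L^\infty$-bound and the $D^{1,2}$-bound, allows one to absorb the ``bad'' coupling and commutator contributions and close the tightness estimate.
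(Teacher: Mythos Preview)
Your overall strategy—translate, decouple via diverging shifts, then squeeze energies—is sound, but the step you flag as ``the main obstacle'' is not correctly justified. You invoke the Appendix's uniform $L^\infty$-estimate (Lemma~\ref{maincont}) to obtain $L^{2p}$-tightness, yet that lemma takes \emph{strong $L^{2p}$ convergence as a hypothesis}; using it here is circular. It is also stated for the single-core system with $Q_1$ in every equation, not for the translated multi-core system you are working with. In fact the $L^\infty$ bound is a red herring: once you know $(\tilde u_{n,i})$ is bounded in $L^{2p}(\rn)$ (which follows, as in the paper, from the Nehari identity and the energy bound, without any tightness), the cut-off test against $\eta_R^2\tilde u_{n,i}$ already gives
\[
\mu_i\int_{|x|>2R}|\tilde u_{n,i}|^{2p}\ \le\ C\,R^{-2}\int_{R<|x|<2R}\tilde u_{n,i}^2\ \le\ C\,R^{-2}\,|\tilde u_{n,i}|_{2p}^2\,R^{N(p-1)/p}\ =\ C\,R^{\frac{N(p-1)}{p}-2},
\]
and the exponent is negative precisely because $p<\frac{N}{N-2}$. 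This gives uniform tightness without any appeal to $L^\infty$. With this correction your argument closes.

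It is worth noting that the paper takes a different route which avoids tightness altogether. Instead of translating only by $y_i/\eps_n$, it considers \emph{all} translates $w_{n,i,j}(x):=u_{n,i}(x+y_j/\eps_n)$ and shows, via a sign argument (the weak limit satisfies $-\Delta w_{i,j}\le 0$ in $\rn$ because $Q_1(\cdot-\tfrac{y_i-y_j}{\eps_n})\to -1$ locally and $\lambda_{ij}<0$), that $w_{i,j}=0$ whenever $j\ne i$. This kills the coupling in the weak formulation by local compactness alone, and then a Fatou-type chain on the Nehari identity yields strong convergence in $E$ directly. Your tightness approach is a legitimate alternative and has the advantage of immediately giving strong $L^{2p}$ convergence once energies match; the paper's approach is perhaps cleaner in that it uses nothing beyond weak limits and signs.
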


The solutions of~\eqref{eq:limiting} are the nontrivial critical points of the functional $J_i:E\to\r$ given by
$$J_i(w):=\frac{1}{2}\|w\|^2-\frac{1}{2p}\irn\mu_iQ_1(x)|w|^{2p}.$$
They belong to the Nehari manifold
$$\cM_i:=\{w\in E:w\neq 0, \ J_i'(w)w=0\}.$$
Set 
$$\kappa_i:=\inf_{\cM_i}J_i.$$
A solution $w\in\cM_i$ that satisfies $J_i(w)=\kappa_i$ is called a \emph{least energy solution}. The existence of such solution is proved in \cite{fw}.

We start with the following lemma.

\begin{lemma} \label{unifbound}
There exist $\eps_0>0$ and constants $a_0,d_0>0$ such that $a_0 \le c_\eps\le d_0$ for all $\eps\in(0,\eps_0)$.
\end{lemma}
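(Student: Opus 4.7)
The lower bound $c_\eps\ge a_0:=\ell C_0$ is already contained in~\eqref{eq:infimum} and is uniform in $\eps$. For the upper bound, my strategy is to exhibit, for every sufficiently small $\eps>0$, an element $\bf t_\eps\bf u_\eps\in\cN_\eps$ whose $\cJ_\eps$-value is controlled by a constant independent of $\eps$. The natural candidate is assembled from (translated, truncated) least energy solutions of the limit equations~\eqref{eq:limiting}.

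For each $i=1,\ldots,\ell$, pick a least energy solution $w_i\in E$ of~\eqref{eq:limiting} (existence recorded from \cite{fw}), so that $\|w_i\|^2=\mu_i\irn Q_1(x)|w_i|^{2p}$ and $J_i(w_i)=\kappa_i$. Since $w_i\in E$ may fail to lie in $L^2(\rn)$ when $N\in\{3,4\}$, I truncate: take a smooth cutoff $\eta_R$ equal to $1$ on $B_R(0)$, zero outside $B_{2R}(0)$, with $|\nabla\eta_R|\le 2/R$, and set $w_i^R:=\eta_R w_i\in H^1(\rn)$. Standard estimates using $w_i\in L^{2^*}\cap L^{2p}$ give $\irn|\nabla w_i^R|^2\to\irn|\nabla w_i|^2$ and $\irn Q_1|w_i^R|^{2p}\to\irn Q_1|w_i|^{2p}>0$ as $R\to\infty$. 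Define $\bf u_\eps:=(u_{1,\eps},\ldots,u_{\ell,\eps})\in\cH$ by $u_{i,\eps}(x):=w_i^R(x-\tfrac{y_i}{\eps})$. Since the $y_i$ are pairwise distinct, $|\tfrac{y_i}{\eps}-\tfrac{y_j}{\eps}|=|y_i-y_j|/\eps\to\infty$ as $\eps\to 0$, so for each fixed $R$ there is $\eps_R>0$ such that for every $\eps\in(0,\eps_R)$ the supports $\supp(u_{i,\eps})$ are pairwise disjoint.

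With disjoint supports, the coupling integrals $d_{\bf u_\eps,ij}$ vanish, and the criterion \cite[Lemma 2.1]{csz} used in Lemma~\ref{bfu}$(a)$ (which requires only $b_{\eps,\bf u_\eps,i}+\sum_{j\neq i}d_{\bf u_\eps,ij}>0$, here reducing to $\mu_i\irn Q_1|w_i^R|^{2p}>0$, valid for $R$ large) produces a unique $\bf t_\eps\in(0,\infty)^\ell$ with $\bf t_\eps\bf u_\eps\in\cN_\eps$. A direct maximization using $t_{\eps,i}^{2p-2}=a_{\eps,\bf u_\eps,i}/b_{\eps,\bf u_\eps,i}$ yields
\begin{equation*}
\cJ_\eps(\bf t_\eps\bf u_\eps)=\frac{p-1}{2p}\sum_{i=1}^\ell\frac{\bigl(\irn|\nabla w_i^R|^2+\eps^2\irn|w_i^R|^2\bigr)^{p/(p-1)}}{\bigl(\mu_i\irn Q_1|w_i^R|^{2p}\bigr)^{1/(p-1)}}.
\end{equation*}
Sending $\eps\to 0$ with $R$ fixed kills the $\eps^2\irn|w_i^R|^2$ term, and then sending $R\to\infty$ and invoking the Nehari identity reduces the right-hand side to $\sum_{i=1}^\ell\kappa_i$. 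Fixing $R$ large and then choosing $\eps_0<\eps_R$ small enough therefore gives $c_\eps\le\cJ_\eps(\bf t_\eps\bf u_\eps)\le d_0:=\sum_{i=1}^\ell\kappa_i+1$ for every $\eps\in(0,\eps_0)$.

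The only delicate technical point is the truncation forced when $w_i\notin L^2(\rn)$, which makes it necessary to take the iterated limit $\eps\to 0$ before $R\to\infty$; the remaining pieces are routine computations or direct appeals to the machinery of Section~\ref{existence}.
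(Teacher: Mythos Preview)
Your argument is correct, but it takes a noticeably more elaborate route than the paper. The paper simply fixes a single nonzero $\vp\in\cC_c^\infty(B_1(0))$, translates it to $v_{\eps,i}(x):=t_{\eps,i}\vp(x-\tfrac{y_i}{\eps})$ with the explicit scaling $t_{\eps,i}$ from~\eqref{tei}, observes that the supports are pairwise disjoint once $\eps<\eps_0:=\tfrac12\min_{i\ne j}|y_i-y_j|$, and bounds $\|\bf v_\eps\|_\eps^2$ directly by $\sum_i t_{\eps,i}^2\int_{B_1(0)}(|\nabla\vp|^2+\vp^2)$. No truncation, no limiting profile, no iterated limit. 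Your construction instead imports the least energy solutions $w_i$ of~\eqref{eq:limiting} from \cite{fw}, truncates them to force $L^2$-membership, and then has to manage two parameters ($R$ and $\eps$). This works, and it buys you an asymptotically sharp constant $d_0$ close to $\sum_i\kappa_i$---but the lemma only asks for \emph{some} uniform upper bound, and that sharpness is never used: the identification $\lim_n c_{\eps_n}=\sum_i\kappa_i$ is obtained later in the proof of Theorem~\ref{convergence} by an independent argument. So your approach is valid but heavier machinery than the situation requires.
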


\begin{proof}
The lower bound $a_0$ is given in~\eqref{eq:infimum}.

Let $\eps_0:=\frac{1}{2}\min_{i\neq j}|y_i-y_j|$ and fix $\vp\in\cC_c^\infty(B_1(0))$, $\vp\neq 0$. For each $\eps\in(0,\eps_0]$, let 
\begin{equation} \label{tei}
t_{\eps,i}:=\left(\frac{\|\vp\|_\eps^2}{\int_{B_1(0)}\mu_i|\vp|^{2p}}\right)^{1/(2p-2)}=\left(\frac{\int_{B_1(0)}(|\nabla \vp|^2+\eps^2\vp^2)}{\int_{B_1(0)}\mu_i|\vp|^{2p}}\right)^{1/(2p-2)}.
\end{equation} 
Set $v_{\eps,i}(x):=t_{\eps,i}\vp(x-\frac{y_i}{\eps})$ and $\bf v_\eps:=(v_{\eps,1},\ldots,v_{\eps,\ell})$. Then 
$$\|v_{\eps,i}\|_\eps^2=\int_{B_1(\frac{y_i}{\eps})}\mu_i|v_{\eps,i}|^{2p}=\irn\mu_i\what Q_{\eps,i}|v_{\eps,i}|^{2p}$$
and, as $v_{\eps,i}$ and $v_{\eps,j}$ have disjoint supports if $i\neq j$, we have that $\bf v_\eps\in\cN_\eps$. It follows that
\begin{align*}
\frac{2p}{p-1}c_\eps &\leq\|\bf v_\eps\|_\eps^2=\sum_{i=1}^\ell \int_{B_1(\frac{y_i}{\eps})}(|\nabla v_{\eps,i}|^2+\eps^2v_{\eps,i}^2)=\sum_{i=1}^\ell t_{\eps,i}^2\int_{B_1(0)}(|\nabla \vp|^2+\eps^2\vp^2) \\
&\leq C\sum_{i=1}^\ell\int_{B_1(0)}(|\nabla \vp|^2+\vp^2)=:\frac{2p}{p-1} d_0\qquad\text{for every \ }\eps\in(0,\eps_0],
\end{align*}
as claimed.
\end{proof}
\medskip

\begin{proof}[Proof of Theorem~\ref{convergence}]
Let $\bf u_n=(u_{n,1},\ldots,u_{n,\ell})\in\cN_{\eps_n}$ satisfy $\cJ_{\eps_n}(\bf u_n)=c_{\eps_n}$ and $u_{n,i}\geq 0$. By Lemma~\ref{unifbound}, $(u_{n,i})$ is bounded in $D^{1,2}(\rn)$ and, as $\lambda_{ij}<0$, using~\eqref{sobolev} and  Lemma~\ref{unifbound}, we obtain
\begin{align*}
\int_{|x-\frac{y_j}{\eps_n}|\geq 1}\mu_i|u_{n,i}|^{2p}&\leq \|u_{n,i}\|_{\eps_n}^2+\int_{|x-\frac{y_i}{\eps_n}|\geq 1}\mu_i|u_{n,i}|^{2p}-\sum\limits_{j\neq i}\irn\lambda_{ij}|u_{n,j}|^p|u_{n,i}|^p \\
&=\int_{|x-\frac{y_i}{\eps_n}|< 1}\mu_i|u_{n,i}|^{2p}\leq C\|u_{n,i}\|_{\eps_n}^{2p}\leq C_1.
\end{align*}
This shows that $(u_{n,i})$ is bounded in $L^{2p}(\rn)$ and, therefore, in $E$. Set
$$w_{n,i,j}(x):=u_{n,i}\Big(x+\frac{y_j}{\eps_n}\Big).$$
Then, $(w_{n,i,j})$ is bounded in $E$.  So there exists $w_{i,j}\in E$ such that, after passing to a subsequence, $w_{n,i,j}\rh w_{i,j}$ weakly in $E$, $w_{n,i,j}\to w_{i,j}$ in $L^{2p}_{loc}(\rn)$ and $w_{n,i,j}\to w_{i,j}$ a.e. in $\rn$. Hence $w_{i,j}\geq 0$. From~\eqref{eq:lower bound} and~\eqref{eq:infimum} we get
$$0<\frac{2p}{p-1}a_0\leq\|u_{n,i}\|_{\eps_n}^2\leq\int_{|x-\frac{y_i}{\eps_n}|< 1}\mu_i|u_{n,i}|^{2p}=\int_{|x|< 1}\mu_i|w_{n,i,i}|^{2p},$$
and passing to the limit we see that $\int_{|x|< 1}|w_{i,i}|^{2p}>0$. Hence, $w_{i,i}\neq 0$. Let $\vp\in\cC^\infty_c(\rn)$ and set $\vp_{n,j}(x):=\vp(x-\frac{y_j}{\eps_n})$. Then, performing the change of variables $x\mapsto x+\frac{y_j}{\eps_n}$ and recalling that $\what Q_{\eps_n,i}(x) = Q_1(x-\frac{y_i}{\eps_n})$, we obtain
\begin{align}
0&=\partial_i\cJ_{\eps_n}(\bf u_n)\vp_{n,j} \nonumber \\
&=\irn\Big(\nabla u_{n,i}\cdot\nabla\vp_{n,j} + \eps_n^2 u_{n,i}\vp_{n,j}-\mu_i\what Q_{\eps_n,i}(x)|u_{n,i}|^{2p-2}u_{n,i}\vp_{n,j}-\sum_{k\neq i}\lambda_{ik}|u_{n,k}|^p|u_{n,i}|^{p-2}u_{n,i}\vp_{n,j}\Big)\nonumber \\
&=\irn\Big(\nabla w_{n,i,j}\cdot\nabla\vp + \eps_n^2 w_{n,i,j}\vp-\mu_i Q_1\big(x-\frac{y_i-y_j}{\eps_n}\big)|w_{n,i,j}|^{2p-2}w_{n,i,j}\vp\nonumber \\
&\qquad\qquad-\sum_{k\neq i}\lambda_{ik}|w_{n,k,j}|^p|w_{n,i,j}|^{p-2}w_{n,i,j}\vp\Big). \label{eq:partial J}
\end{align}
If $j\neq i$, then $\frac{|y_i-y_j|}{\eps_n}\to\infty$. Since $\vp$ has compact support, letting $n\to\infty$ we obtain
\begin{align*}
\irn\nabla w_{i,j}\cdot\nabla\vp=-\irn\mu_i|w_{i,j}|^{2p-2}w_{i,j}\vp+\sum_{k\neq i}\irn\lambda_{ik}|w_{k,j}|^p|w_{i,j}|^{p-2}w_{i,j}\vp\qquad\text{for every \ }\vp\in\cC^\infty_c(\rn).
\end{align*}
As $w_{i,j}\geq 0$, this implies that
$$\irn\nabla w_{i,j}\cdot\nabla\vp\leq 0\qquad\text{for every \ }\vp\in\cC^\infty_c(\rn)\text{ \ with \ }\vp\geq 0,$$
and, as a consequence, $w_{i,j}=0$, i.e., $w_{n,i,j}\rh 0$ weakly in $E$ if $i\neq j$. Therefore, setting $i=j$ in~\eqref{eq:partial J} and letting $n\to\infty$ we get that
\[
\irn\nabla w_{i,i}\cdot\nabla\vp=\irn\mu_iQ_1(x)|w_{i,i}|^{2p-2}w_{i,i}\vp\qquad\text{for every \ }\vp\in\cC^\infty_c(\rn).
\]
This shows that $w_i:=w_{i,i}$ solves~\eqref{eq:limiting}. From this fact and Fatou's lemma we obtain
\begin{align}\label{eq:w_i}
\|w_i\|^2 & \leq \liminf_{n\to\infty}\|w_{n,i,i}\|^2\leq \limsup_{n\to\infty}\|w_{n,i,i}\|^2= \limsup_{n\to\infty}\|u_{n,i}\|^2\leq \limsup_{n\to\infty}\|u_{n,i}\|_{\eps_n}^2 \nonumber \\
&=\limsup_{n\to\infty}\Big(\irn\mu_i\what Q_{\eps_n,i}(x)|u_{n,i}|^{2p} + \sum_{j\neq i}\irn\lambda_{ij}|u_{n,j}|^p|u_{n,i}|^p\Big) \nonumber \\
&=\limsup_{n\to\infty}\Big(\irn\mu_i Q_1(x)|w_{n,i,i}|^{2p} + \sum_{j\neq i}\irn\lambda_{ij}|w_{n,j,i}|^p|w_{n,i,i}|^p\Big) \nonumber \\
&\leq\limsup_{n\to\infty}\irn\mu_i Q_1(x)|w_{n,i,i}|^{2p} \leq \irn\mu_i Q_1(x)|w_i|^{2p} = \|w_i\|^2.
\end{align} 
It follows that $w_{n,i,i}\to w_i$ strongly in $D^{1,2}(\rn)$. Replacing $\limsup$ by $\liminf$ in the first three lines of \eqref{eq:w_i} we obtain
\begin{align*}
\|w_i\|^2 & \leq \liminf_{n\to\infty}\Big(\irn\mu_i Q_1(x)|w_{n,i,i}|^{2p} + \sum_{j\neq i}\irn\lambda_{ij}|w_{n,j,i}|^p|w_{n,i,i}|^p\Big)\leq \liminf_{n\to\infty}\irn\mu_i Q_1(x)|w_{n,i,i}|^{2p} \nonumber \\
&\leq\limsup_{n\to\infty}\irn\mu_i Q_1(x)|w_{n,i,i}|^{2p} \leq \irn\mu_i Q_1(x)|w_i|^{2p} = \|w_i\|^2.
\end{align*} 
Hence,
$$\lim_{n\to\infty}\irn\mu_i Q_1(x)|w_{n,i,i}|^{2p}=\irn\mu_i Q_1(x)|w_i|^{2p}.$$
As $w_{n,i,i}\to w_i$ in $L_{loc}^{2p}(\rn)$ it follows that $w_{n,i,i}\to w_i$ strongly in $L^{2p}(\rn)$, thus, in $E$.
Furthermore,
\begin{equation} \label{eq:energy}
\lim_{n\to\infty}\cJ_{\eps_n}(\bf u_n)=\frac{p-1}{2p}\sum_{i=1}^\ell \lim_{n\to\infty}\|u_{n,i}\|_{\eps_n}^2=\frac{p-1}{2p}\sum_{i=1}^\ell \|w_i\|^2.
\end{equation}
Note also for further reference that 
\begin{equation} \label{eq:reference}
\irn \eps_n^2 u_{n,i}^2\to 0.
\end{equation}

To prove that all $w_i$ are least energy solutions of~\eqref{eq:limiting} we argue by contradiction. Assume that (at least) one of them is not. We can choose $v_i\in\cM_i\cap\cC^\infty_c(\rn)$, $1\le i\le \ell$, such that
\[
\sum_{i=1}^\ell \|w_i\|^2 > \sum_{i=1}^\ell \|v_i\|^2.
\]
Set
$$ 
v_{n,i}(x) := v_i\Big(x-\frac{y_i}{\eps_n}\Big)\text{ \ for \ }i=1,\ldots,\ell\qquad\text{and}\qquad\bf v_n:=(v_{n,1},\ldots,v_{n,\ell}).
$$
Then $\bf v_n\in\cH$. Since $|\frac{y_i-y_j}{\eps_n}|\to\infty$ if $i\neq j$, we have that $\lim_{n\to\infty}\irn\lambda_{ij}|v_{n,j}|^p|v_{n,i}|^p=0$. Therefore,
\begin{equation} \label{pos}
\irn\mu_i\what Q_{\eps_n,i}|v_{n,i}|^{2p}+\sum_{j\neq i}\irn\lambda_{ij}|v_{n,j}|^p|v_{n,i}|^p > 0
\end{equation}
for $n$ large enough. Hence, there exist $\bf t_n=(t_{n,1},\ldots,t_{n,\ell})\in(0,\infty)^\ell$ such that $\bf t_n\bf v_n\in\cN_{\eps_n}$; see \cite[Lemma 2.1]{csz}. Moreover, $t_{n,i}$ are bounded and bounded away from 0. Indeed,
\begin{align*}
0 = \partial_{t_i}\cJ_{\eps_n}(\bt_n\bf v_n) & = t_{n,i}\|v_{n,i}\|_{\eps_n}^2 - t_{n,i}^{2p-1}\irn\mu_i\what Q_{\eps_n,i}|v_{n,i}|^{2p} - t_{n,j}^pt_{n,i}^{p-1}\sum_{j\neq i}\irn\lambda_{ij}|v_{n,j}|^p|v_{n,i}|^p \\
& \ge t_{n,i}\|v_{n,i}\|_{\eps_n}^2 - t_{n,i}^{2p-1}\irn\mu_i\what Q_{\eps_n,i}|v_{n,i}|^{2p},
\end{align*}
hence $t_{n,i}$ is bounded away from 0. For each $n$ choose $i(n)$ such that $t_{n,i(n)}\ge t_{n,j}$ for all $j$. It follows then from the first line above and~\eqref{pos} that $t_{n,i(n)}$ is bounded, and hence so are all $t_{n,i}$. Passing to a subsequence, $t_{n,i}\to t_i$ where $t_i$ is bounded and bounded away from 0. We have
\begin{align*}
t_{n,i}^{2p}\irn\mu_iQ_1|v_i|^{2p} + o(1) & = t_{n,i}^{2p}\irn\mu_i\what Q_{\eps_n,i}|v_{n,i}|^{2p} + o(1) \\
& = t_{n,i}^{2p}\irn\mu_i\what Q_{\eps_n,i}|v_{n,i}|^{2p} + \sum_{j\neq i}t_{n,j}^pt_{n,i}^p\irn\lambda_{ij}|v_{n,j}|^p|v_{n,i}|^p
 = t_{n,i}^2\|v_{n,i}\|_{\eps_n}^2,
\end{align*}
and passing to the limit,
\[
t_i^{2p}\irn\mu_iQ_1|v_i|^{2p} = t_i^2\|v_i\|^2.
\]
As $v_i\in \cM_i$,  $t_i=1$. Since $\bf u_n$ is a least energy solution to~\eqref{eq:system2},
\begin{align*}
\lim_{n\to\infty}\cJ_{\eps_n}(\bf u_n)&\leq\lim_{n\to\infty}\cJ_{\eps_n}(\bf t_n\bf v_n)=\frac{p-1}{2p}\sum_{i=1}^\ell \lim_{n\to\infty}\|t_{n,i}v_{n,i}\|_{\eps_n}^2 \\
&=\frac{p-1}{2p}\sum_{i=1}^\ell \|v_i\|^2 <\frac{p-1}{2p}\sum_{i=1}^\ell\|w_i\|^2.
\end{align*}
This contradicts~\eqref{eq:energy}. Hence, $w_i$ are nonnegative least energy solutions of~\eqref{eq:limiting} and the (strict) positivity of $w_i$ follows by the maximum principle (see \cite[Theorem 2.5]{chs}).
\end{proof}

\begin{corollary} \label{concentration}
Let $\eps_n\to 0$ and let $\bf v_n=(v_{n,1},\ldots,v_{n,\ell})$ be a nonnegative least energy solution to the system~\eqref{eq:system} with $\eps=\eps_n$. For each $\delta>0$, after passing to a subsequence, the following statements hold true:
\[
\lim_{n\to\infty} \frac{\int_{|x-y_i|\leq\delta}(|\nabla v_{n,i}|^2+v_{n,i}^2)}{\irn (|\nabla v_{n,i}|^2+v_{n,i}^2)} = 1 \qquad \text{and} \qquad \lim_{n\to\infty} \frac{\int_{|x-y_i|\leq\delta}|v_{n,i}|^{2p}}{\irn |v_{n,i}|^{2p}} = 1.
\]
\end{corollary}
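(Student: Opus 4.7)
The plan is to transfer both ratios to the rescaled variable and then invoke the strong $E$-convergence coming from Theorem~\ref{convergence}. Let $\bf u_n=(u_{n,1},\ldots,u_{n,\ell})$ be defined by $u_{n,i}(x):=\eps_n^{1/(p-1)}v_{n,i}(\eps_n x)$, so that $\bf u_n$ is a nonnegative least energy solution of~\eqref{eq:system2}, and set $w_{n,i,i}(x):=u_{n,i}(x+\frac{y_i}{\eps_n})$. By Theorem~\ref{convergence}, along a subsequence $w_{n,i,i}\to w_i$ strongly in $E$, where $w_i$ is a positive least energy solution of~\eqref{eq:limiting}. In particular $\|w_i\|^2>0$ and $|w_i|_{2p}^{2p}>0$.

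First I would rewrite the $L^{2p}$ ratio. The change of variables $x=\eps_n z+y_i$ together with $v_{n,i}(\eps_n z+y_i)=\eps_n^{-1/(p-1)}w_{n,i,i}(z)$ gives
\[
\int_{|x-y_i|\leq\delta}|v_{n,i}|^{2p}\,dx=\eps_n^{N-2p/(p-1)}\int_{|z|\leq\delta/\eps_n}|w_{n,i,i}|^{2p}\,dz,
\]
and an identical identity (with the whole of $\rn$) for the denominator. The prefactors cancel, so the ratio equals $\int_{|z|\leq\delta/\eps_n}|w_{n,i,i}|^{2p}/\irn|w_{n,i,i}|^{2p}$. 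Since $\delta/\eps_n\to\infty$ and $w_{n,i,i}\to w_i$ in $L^{2p}(\rn)$, the denominator tends to $|w_i|_{2p}^{2p}>0$, and the numerator differs from the denominator by $\int_{|z|>\delta/\eps_n}|w_{n,i,i}|^{2p}$, which is bounded by $2^{2p-1}\|w_{n,i,i}-w_i\|_{L^{2p}}^{2p}+2^{2p-1}\int_{|z|>\delta/\eps_n}|w_i|^{2p}$, hence tends to $0$. The ratio thus converges to $1$.

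Next I would treat the $H^1$ ratio. The same change of variables yields
\[
\int_{|x-y_i|\leq\delta}\!\bigl(|\nabla v_{n,i}|^2+v_{n,i}^2\bigr)dx=\eps_n^{N-2-2/(p-1)}\!\left[\int_{|z|\leq\delta/\eps_n}|\nabla w_{n,i,i}|^2+\eps_n^2\int_{|z|\leq\delta/\eps_n}w_{n,i,i}^2\right],
\]
and the analogous identity over $\rn$. After cancelling the common prefactor, the denominator equals $\|w_{n,i,i}\|^2+\eps_n^2\irn u_{n,i}^2$, which by the strong $D^{1,2}$-convergence in Theorem~\ref{convergence} and by~\eqref{eq:reference} tends to $\|w_i\|^2>0$. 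For the numerator, strong $D^{1,2}$-convergence implies $|\nabla w_{n,i,i}|^2\to|\nabla w_i|^2$ in $L^1(\rn)$; since $\delta/\eps_n\to\infty$, the sets $\{|z|\leq\delta/\eps_n\}$ exhaust $\rn$, so $\int_{|z|\leq\delta/\eps_n}|\nabla w_{n,i,i}|^2\to\|w_i\|^2$. The remaining $L^2$-piece is bounded by $\eps_n^2\irn u_{n,i}^2\to0$ by~\eqref{eq:reference} again. Thus both numerator and denominator share the common limit $\|w_i\|^2$, so the ratio converges to $1$.

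The only genuine subtlety, and what I see as the main obstacle, is that the original $H^1$-norm mixes two scales while the rescaled $E$-norm is scale-invariant; one cannot expect strong $H^1$-convergence of $w_{n,i,i}$ to $w_i$ in general, only $D^{1,2}\cap L^{2p}$-convergence. The reason the argument still works is that the $L^2$-contribution is suppressed precisely by the factor $\eps_n^2$ appearing in $\|u_{n,i}\|_{\eps_n}^2$, and this factor produces the negligible term via~\eqref{eq:reference}. Once this is observed, the proof is a routine exhaustion argument based on $\delta/\eps_n\to\infty$.
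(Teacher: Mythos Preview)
Your proof is correct and follows essentially the same route as the paper: rescale to the variables $w_{n,i,i}$, cancel the common prefactor in numerator and denominator, and then use the strong $E$-convergence from Theorem~\ref{convergence} together with~\eqref{eq:reference} to conclude that both parts of each ratio share the same positive limit. Your explicit tail estimate for the $L^{2p}$-piece and your remark about the $\eps_n^2$-suppression of the $L^2$-term are just slightly more detailed versions of the paper's one-line appeals to strong convergence and~\eqref{eq:reference}.
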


\begin{proof}
Let $\bf u_n=(u_{n,1},\ldots,u_{n,\ell})$ be given by $u_{n,i}(x) = \eps_n^{\frac1{p-1}}v_{n,i}(\eps_n x)$. Then $\bf u_n$ is a least energy solution to the system~\eqref{eq:system2}. Setting $w_{n,i}(x) := u_{n,i}(x+\frac{y_i}{\eps_n})=\eps_n^\frac{1}{p-1}v_{n,i}(\eps_nx+y_i)$ and performing a change of variable we obtain
\[
 \frac{\int_{|x-y_i|\leq\delta}(|\nabla v_{n,i}|^2+v_{n,i}^2)}{\irn (|\nabla v_{n,i}|^2+v_{n,i}^2)} = \frac{\eps_n^{N-\frac{2p}{p-1}}\int_{|x|\leq\frac{\delta}{\eps_n}}(|\nabla w_{n,i}|^2+\eps_n^2w_{n,i}^2)}{\eps_n^{N-\frac{2p}{p-1}}\irn (|\nabla w_{n,i}|^2+\eps_n^2w_{n,i}^2)}  = \frac{\int_{|x|\leq\frac{\delta}{\eps_n}}(|\nabla w_{n,i}|^2+\eps_n^2w_{n,i}^2)}{\irn (|\nabla w_{n,i}|^2+\eps_n^2w_{n,i}^2)}.
\]
According to Theorem~\ref{convergence}, $w_{n,i}\to w_i$ strongly in $E$ and, by~\eqref{eq:reference}, $\eps_n w_{n,i}\to 0$ in $L^2(\rn)$. It follows that the numerator and the denominator on the right-hand side above tend to $\|w_i\|^2$. Hence the first statement. The proof of the second one is somewhat simpler:
\[
 \frac{\int_{|x-y_i|\leq\delta}|v_{n,i}|^{2p}}{\irn |v_{n,i}|^{2p}} = \frac{\eps_n^{N-\frac{2p}{p-1}}\int_{|x|\leq\frac{\delta}{\eps_n}}|w_{n,i}|^{2p}}{\eps_n^{N-\frac{2p}{p-1}}\irn |w_{n,i}|^{2p}}  = \frac{\int_{|x|\leq\frac{\delta}{\eps_n}}|w_{n,i}|^{2p}}{\irn |w_{n,i}|^{2p}} \to 1.
\]
This completes the proof.
\end{proof}

\medskip

\begin{proof}[Proof of Theorem~\ref{thm:uncoupled}]
This follows immediately from Theorem~\ref{convergence} and Corollary~\ref{concentration}.
\end{proof}

\section{The limit profile of minimizers for a single region of attraction}
\label{sec:single core}

Throughout this section we assume that $y_1=\cdots=y_\ell=0$.  Note that here we have $\what Q_{\eps,i}(x) = Q_1(x)$ for all $i$ and hence 
\[
\cJ_\eps(\bf u) = \frac12\|\bf u\|_\eps^2 - \frac1{2p} \sum_{i=1}^\ell\irn \mu_i Q_1(x)|u_i|^{2p}  - \frac{1}{2p}\sum_{\substack{i,j=1 \\j\ne i}}^\ell\irn\lambda_{ij}|u_j|^p|u_i|^p.
\] 

\begin{theorem} \label{thm:limit for single core}
Let $\eps_n\to 0$ and let $\bf u_n$ be a nonnegative least energy solution of the system~\eqref{eq:system2} with $\eps=\eps_n$. Then, after passing to a subsequence, $(\bf u_n)$ converges strongly in $E^\ell$ to a nonnegative least energy solution of the limit system~\eqref{eq:system3}.
\end{theorem}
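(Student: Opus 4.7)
The strategy is to adapt the proof of Theorem~\ref{convergence} to the simpler geometric situation where all attraction centers coincide at the origin. Since $\what Q_{\eps,i}=Q_1$ identically, there is no need to translate the components by $y_i/\eps_n$ to localize them, so we can work directly with $\bf u_n$. The boundedness argument of Theorem~\ref{convergence} carries over verbatim: Lemma~\ref{unifbound} yields $(\|u_{n,i}\|_{\eps_n})$ bounded, hence $(u_{n,i})$ bounded in $D^{1,2}(\rn)$, and the Nehari identity combined with~\eqref{sobolev} gives $\int_{|x|\ge 1}|u_{n,i}|^{2p}\le\int_{|x|<1}\mu_i|u_{n,i}|^{2p}\le C\|u_{n,i}\|_{\eps_n}^{2p}$, so $(u_{n,i})$ is bounded in $L^{2p}(\rn)$, hence in $E$. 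Passing to a subsequence, $u_{n,i}\rh u_i$ weakly in $E$, strongly in $L^{2p}_{loc}(\rn)$ and a.e. in $\rn$.

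Next I would pass to the limit in $\partial_i\cJ_{\eps_n}(\bf u_n)\vp=0$ for $\vp\in\cC^\infty_c(\rn)$. The $\eps_n^2 u_{n,i}\vp$ term vanishes in the limit because $\|u_{n,i}\|_{\eps_n}$ is bounded and $\vp$ is compactly supported; the nonlinear terms converge by $L^{2p}_{loc}$-convergence and boundedness; and $\what Q_{\eps_n,i}=Q_1$ requires no limit, so $\bf u$ solves~\eqref{eq:system3} weakly. Nontriviality $u_i\ne 0$ follows from~\eqref{lbd:sob} exactly as in Theorem~\ref{convergence}: $\int_{|x|<1}\mu_i|u_{n,i}|^{2p}\ge\frac{2p}{p-1}a_0>0$ and local compactness pass to $u_i$.

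For strong convergence I would replay the Fatou chain from~\eqref{eq:w_i}: $\|u_i\|^2\le\liminf\|u_{n,i}\|^2\le\limsup\|u_{n,i}\|_{\eps_n}^2$ equals $\limsup\bigl(\int\mu_iQ_1|u_{n,i}|^{2p}+\sum_{j\ne i}\int\lambda_{ij}|u_{n,j}|^p|u_{n,i}|^p\bigr)$, which by Fatou (applied separately to $\int_{|x|<1}|u_{n,i}|^{2p}\to\int_{|x|<1}|u_i|^{2p}$ via local compactness and to $\liminf\int_{|x|\ge 1}|u_{n,i}|^{2p}\ge\int_{|x|\ge 1}|u_i|^{2p}$ and to the negative coupling terms) is bounded above by $\int\mu_iQ_1|u_i|^{2p}+\sum_{j\ne i}\int\lambda_{ij}|u_j|^p|u_i|^p=\|u_i\|^2$ since $\bf u\in\cN_\infty$. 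Equality throughout gives $\|u_{n,i}\|\to\|u_i\|$ in $D^{1,2}$, $\eps_n^2\int u_{n,i}^2\to 0$, and the $\liminf$-version of the same chain forces $\int Q_1|u_{n,i}|^{2p}\to\int Q_1|u_i|^{2p}$; combined with the limit on the ball this upgrades to $|u_{n,i}|_{2p}\to|u_i|_{2p}$, yielding $u_{n,i}\to u_i$ in $E$.

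The main obstacle is showing $\bf u$ is a \emph{least energy} solution, because the Nehari manifold $\cN_\infty$ of the limit system~\eqref{eq:system3} is not a product of single-equation manifolds: the components stay coupled. I would argue by contradiction: suppose there is $\bf v=(v_1,\ldots,v_\ell)\in\cN_\infty$ with $\sum_i\|v_i\|^2<\sum_i\|u_i\|^2$. By density, approximate each $v_i$ in $E$ by $\widetilde v_i\in\cC^\infty_c(\rn)$ so that the inequality is preserved and
\[
\irn\mu_iQ_1|\widetilde v_i|^{2p}+\sum_{j\ne i}\irn\lambda_{ij}|\widetilde v_j|^p|\widetilde v_i|^p>0\qquad\text{for every }i,
\]
which is the positivity condition ensuring existence of a unique projection onto a Nehari manifold (\cite[Lemma 2.1]{csz}). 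For such compactly supported $\widetilde{\bf v}$ one has $\|\widetilde v_i\|_{\eps_n}^2\to\|\widetilde v_i\|^2$, and $\what Q_{\eps_n,i}=Q_1$ on $\supp(\widetilde v_i)$, so the same positivity condition holds for $\widetilde{\bf v}$ seen as an element for $\cN_{\eps_n}$ for all large $n$. Hence there exist $\bf t_n\in(0,\infty)^\ell$ with $\bf t_n\widetilde{\bf v}\in\cN_{\eps_n}$; the same bootstrap as in the proof of Theorem~\ref{convergence} (test the Nehari equations with $i=i(n)$ maximizing $t_{n,i}$) shows $\bf t_n$ is bounded and bounded away from $0$, and its unique limit $\bf t_\infty$ must satisfy the Nehari relations for $\widetilde{\bf v}$, so $\bf t_n\to(1,\ldots,1)$ after adjusting $\widetilde{\bf v}$ to lie exactly on $\cN_\infty$. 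Then
\[
c_{\eps_n}=\cJ_{\eps_n}(\bf u_n)\le\cJ_{\eps_n}(\bf t_n\widetilde{\bf v})=\tfrac{p-1}{2p}\sum_i t_{n,i}^2\|\widetilde v_i\|_{\eps_n}^2\longrightarrow\tfrac{p-1}{2p}\sum_i\|\widetilde v_i\|^2,
\]
while the analogue of~\eqref{eq:energy} gives $c_{\eps_n}\to\frac{p-1}{2p}\sum_i\|u_i\|^2$, contradicting the strict inequality. Nonnegativity of $\bf u$ is inherited from $\bf u_n$ by a.e. convergence, completing the proof.
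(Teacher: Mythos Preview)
Your overall strategy matches the paper's, and most steps are correct. There are two places where ``carries over verbatim'' is not quite right.

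First, Lemma~\ref{unifbound} relies on placing the test functions in the \emph{disjoint} balls $B_1(y_i/\eps)$; when all $y_i=0$ these balls coincide and the coupling terms do not vanish. The fix (the paper's Lemma~\ref{lem:bounds single core}) is to take $\vp_1,\ldots,\vp_\ell\in\cC_c^\infty(B_1(0))$ with pairwise disjoint supports \emph{inside} the single ball; the rest of the upper-bound argument is then unchanged.

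Second, and more substantively, your claim that ``the $\liminf$-version of the same chain forces $\int Q_1|u_{n,i}|^{2p}\to\int Q_1|u_i|^{2p}$'' does not transfer from Theorem~\ref{convergence}. There the chain closes because the limit $w_i$ solves the \emph{decoupled} equation~\eqref{eq:limiting}, so $\|w_i\|^2=\int\mu_iQ_1|w_i|^{2p}$ exactly, and one can sandwich $\liminf\le\limsup$ between two copies of $\|w_i\|^2$. Here $\bf u$ solves the \emph{coupled} system~\eqref{eq:system3}, so $\|u_i\|^2=\int\mu_iQ_1|u_i|^{2p}+\sum_{j\ne i}\int\lambda_{ij}|u_j|^p|u_i|^p$ is in general strictly smaller than $\int\mu_iQ_1|u_i|^{2p}$, and the sandwich does not close. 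The paper's remedy: equality throughout the Fatou chain together with $L^{2p}_{loc}$-convergence on $B_1(0)$ forces
\[
\lim_{n\to\infty}\Big(\int_{|x|\ge 1}\mu_i|u_{n,i}|^{2p}+\sum_{j\ne i}\irn|\lambda_{ij}||u_{n,j}|^p|u_{n,i}|^p\Big)=\int_{|x|\ge 1}\mu_i|u_i|^{2p}+\sum_{j\ne i}\irn|\lambda_{ij}||u_j|^p|u_i|^p;
\]
since Fatou gives $\liminf$ of each nonnegative summand $\ge$ its limit and the sums coincide, each summand converges individually, in particular $\int_{|x|\ge 1}|u_{n,i}|^{2p}\to\int_{|x|\ge 1}|u_i|^{2p}$, and then $|u_{n,i}|_{2p}\to|u_i|_{2p}$ follows. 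Once you insert this step, your argument is complete and coincides with the paper's.
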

\medskip

The solutions to~\eqref{eq:system3} are the critical points with nontrivial components of the functional $\cJ_0:E^\ell\to\r$ given by
$$\cJ_0(\bf u) := \frac12\|\bf u\|^2 - \frac1{2p} \sum_{i=1}^\ell\irn \mu_iQ_1(x)|u_i|^{2p}  - \frac{1}{2p}\sum_{\substack{i,j=1 \\j\ne i}}^\ell\irn\lambda_{ij}|u_j|^p|u_i|^p,$$
where as before, 
\begin{equation} \label{norm}
\|\bf u\|^2:=\|u_1\|^2+\cdots+\|u_\ell\|^2\qquad\text{and}\qquad\|u_i\|^2:=\irn |\nabla u_i|^2.
\end{equation}
They belong to the Nehari-type set
\begin{equation*}
\cN_0:= \Big\{\bf u\in E^\ell: u_i\neq 0, \ \partial_i\cJ_0(\bf u)u_i=0 \ \text{for all} \ i = 1,\ldots,\ell\Big\}.
\end{equation*}
Arguing as in~\eqref{sobolev}-\eqref{lbd:sob} we see that $0<\what C_0\leq\|u_i\|^2\leq\|u_i\|^2_E$ for every $\bf u=(u_1,\ldots,u_\ell)\in\cN_0$. Therefore, $\cN_0$ is closed in $E^\ell$ and
$$
c_0:=\inf_{\cN_0}\cJ_0\geq\what a_0>0
$$
(cf.~\eqref{eq:infimum}).
By a least energy solution of~\eqref{eq:system3} we mean a solution $\bf u$ that satisfies $\cJ_0(\bf u)=c_0$.

As in Section~\ref{limiting}, we have the following lemma.

\begin{lemma} \label{lem:bounds single core}
There exist constants $a_0,d_1>0$ such that $a_0\leq c_\eps\leq d_1$ for all $\eps\in(0,1]$.
\end{lemma}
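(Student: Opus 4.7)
The plan is to adapt Lemma~\ref{unifbound} to the coincident case. The lower bound $c_\eps\geq a_0$ follows directly from~\eqref{eq:infimum}, so only the upper bound requires modification. The test configuration used in Lemma~\ref{unifbound}, in which each component was placed in a translated ball $B_1(y_i/\eps)$ around a distinct center, is no longer available here because all centers coincide at the origin. Instead, I would work inside the single ball $B_1(0)$ and separate the components by choosing bump functions whose supports are disjoint within this ball.

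Concretely, I would fix once and for all $\ell$ nontrivial functions $\vp_1,\ldots,\vp_\ell\in\cC_c^\infty(B_1(0))$ with pairwise disjoint supports (for instance, subdivide $B_1(0)$ into $\ell$ small disjoint balls and place one bump in each). For each $\eps\in(0,1]$ set
\[
t_{\eps,i}:=\left(\frac{\|\vp_i\|_\eps^2}{\int_{B_1(0)}\mu_i|\vp_i|^{2p}}\right)^{\!1/(2p-2)},\qquad v_{\eps,i}:=t_{\eps,i}\vp_i,\qquad\bf v_\eps:=(v_{\eps,1},\ldots,v_{\eps,\ell}).
\]
Because $\supp(v_{\eps,i})\subset B_1(0)$, $\what Q_{\eps,i}=Q_1\equiv 1$ there, and the coupling integrals $\irn\lambda_{ij}|v_{\eps,j}|^p|v_{\eps,i}|^p$ all vanish by disjointness of supports, the choice of $t_{\eps,i}$ is precisely the one that makes $\partial_i\cJ_\eps(\bf v_\eps)v_{\eps,i}=0$ for every $i$, so that $\bf v_\eps\in\cN_\eps$. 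Hence, by~\eqref{eq:lower bound},
\[
c_\eps\leq\cJ_\eps(\bf v_\eps)=\frac{p-1}{2p}\sum_{i=1}^\ell t_{\eps,i}^2\|\vp_i\|_\eps^2.
\]

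To turn this into a uniform bound for $\eps\in(0,1]$ I would use the two-sided control $\int|\nabla\vp_i|^2\leq\|\vp_i\|_\eps^2\leq\int(|\nabla\vp_i|^2+\vp_i^2)$, which bounds $\|\vp_i\|_\eps$ above and away from zero independently of $\eps$ and in turn makes $t_{\eps,i}$ uniformly bounded above (the denominator defining $t_{\eps,i}$ is a fixed positive constant). The resulting constant $d_1$ depends only on $\vp_1,\ldots,\vp_\ell$, the $\mu_i$, and $p$. The argument is essentially routine; the only substantive adjustment compared with Lemma~\ref{unifbound} is the replacement of spatially separated test bumps by disjointly supported bumps inside a single ball, so I do not anticipate any real obstacle.
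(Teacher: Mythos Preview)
Your proposal is correct and follows essentially the same approach as the paper: choose $\ell$ nontrivial functions $\vp_1,\ldots,\vp_\ell\in\cC_c^\infty(B_1(0))$ with pairwise disjoint supports, define $t_{\eps,i}$ exactly as you do, and then repeat the computation from Lemma~\ref{unifbound}. The paper's proof is in fact slightly terser than yours, simply referring back to Lemma~\ref{unifbound} for the remaining details.
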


\begin{proof}
The lower bound $a_0$ is given in~\eqref{eq:infimum}.

Let $\vp_1,\ldots,\vp_\ell\in \cC_c^\infty(B_1(0))\smallsetminus\{0\}$ be such that $\vp_i$ and $\vp_j$ have disjoint supports if $i\ne j$, and let $t_{\eps,i}$ be as in~\eqref{tei}, with $\vp$ replaced by $\vp_i$. Set $v_{\eps,i} := t_{\eps,i}\vp_i$ and $\bf v_\eps := (v_{\eps,1},\ldots,v_{\eps,\ell})$. The rest of the argument is exactly the same as in the proof of Lemma~\ref{unifbound}.
\end{proof}
\medskip

\begin{proof}[Proof of Theorem~\ref{thm:limit for single core}]
Let $\bf u_n=(u_{n,1},\ldots,u_{n,\ell})\in\cN_{\eps_n}$ satisfy $\cJ_{\eps_n}(\bf u_n)=c_{\eps_n}$ and $u_{n,i}\geq 0$. By Lemma~\ref{lem:bounds single core}, $(u_{n,i})$ is bounded in $D^{1,2}(\rn)$ and, thus, in $L^{2p}_{loc}(\rn)$. As
\begin{equation}\label{2pbd}
\int_{|x|\geq 1}\mu_i|u_{n,i}|^{2p}\leq \|u_{n,i}\|_{\eps_n}^2+\int_{|x|\geq 1}\mu_i|u_{n,i}|^{2p}-\sum\limits_{j\neq i}\irn\lambda_{ij}|u_{n,j}|^p|u_{n,i}|^p =\int_{|x|< 1}\mu_i|u_{n,i}|^{2p},
\end{equation}
we have that $(u_{n,i})$ is bounded in $L^{2p}(\rn)$ and, therefore, in $E$. So, after passing to a subsequence, $u_{n,i}\rh u_i$ weakly in $E$, $u_{n,i}\to u_i$ in $L^{2p}_{loc}(\rn)$ and $u_{n,i}\to u_i$ a.e. in $\rn$. Hence, $u_i\geq 0$. From~\eqref{eq:lower bound} and~\eqref{eq:infimum} we get that
$$0<\frac{2p}{p-1}a_0\leq\|u_{n,i}\|_{\eps_n}^2\leq\int_{|x|< 1}\mu_i|u_{n,i}|^{2p},$$
and passing to the limit we see that $\int_{|x|< 1}|u_i|^{2p}>0$. Hence, $u_i\neq 0$. Let $\vp\in\cC^\infty_c(\rn)$. Then,
\begin{align*}
0=\partial_i\cJ_{\eps_n}(\bf u_n)\vp=\irn\Big(\nabla u_{n,i}\cdot\nabla\vp + \eps_n^2 u_{n,i}\vp-\mu_iQ_1(x)|u_{n,i}|^{2p-2}u_{n,i}\vp-\sum_{j\neq i}\lambda_{ij}|u_{n,j}|^p|u_{n,i}|^{p-2}u_{n,i}\vp\Big),
\end{align*}
and letting $n\to\infty$ we obtain
\begin{align*}
\irn\nabla u_i\cdot\nabla\vp=\irn\mu_iQ_1(x)|u_i|^{2p-2}u_i\vp+\sum_{j\neq i}\irn\lambda_{ij}|u_j|^p|u_i|^{p-2}u_i\vp\qquad\text{for every \ }\vp\in\cC^\infty_c(\rn).
\end{align*}
This shows that $\bf u=(u_1,\ldots,u_\ell)$ solves~\eqref{eq:system3}. Using Fatou's lemma and the fact that $\lambda_{ij}<0$ and $\bf u\in\cN_0$ we get
\begin{align*}
\|u_i\|^2 &\leq\liminf_{n\to\infty}\|u_{n,i}\|^2\leq\limsup_{n\to\infty}\|u_{n,i}\|^2\leq\limsup_{n\to\infty}\|u_{n,i}\|_{\eps_n}^2\\
&=\limsup_{n\to\infty}\Big(\irn\mu_iQ_1(x)|u_{n,i}|^{2p}+\sum_{j\neq i}\irn\lambda_{ij}|u_{n,j}|^p|u_{n,i}|^p\Big)\\
&\leq\lim_{n\to\infty}\int_{|x|< 1}\mu_i|u_{n,i}|^{2p}-\liminf_{n\to\infty}\int_{|x|\geq 1}\mu_i|u_{n,i}|^{2p}-\sum_{j\neq i}\liminf_{n\to\infty}\irn|\lambda_{ij}||u_{n,j}|^p|u_{n,i}|^p \\
&\leq \int_{|x|< 1}\mu_i|u_{i}|^{2p}-\int_{|x|\geq 1}\mu_i|u_i|^{2p}-\sum_{j\neq i}\irn|\lambda_{ij}||u_j|^p|u_i|^p \\
&=\irn\mu_iQ_1(x)|u_i|^{2p}+\sum_{j\neq i}\irn\lambda_{ij}|u_j|^p|u_i|^p=\|u_i\|^2\qquad\text{for every \ }i=1,\ldots,\ell.
\end{align*}
This shows that $u_{n,i}\to u_i$ strongly in $D^{1,2}(\rn)$. Replacing $\limsup$ with $\liminf$ in the first two lines of the display above and recalling that $u_{n,i}\to u_i$ in $L^{2p}_{loc}(\rn)$ we obtain
$$\lim_{n\to\infty}\Big(\int_{|x|\geq 1}\mu_i|u_{n,i}|^{2p}+\sum_{j\neq i}\irn|\lambda_{ij}||u_{n,j}|^p|u_{n,i}|^p\Big)=\int_{|x|\geq 1}\mu_i|u_i|^{2p}+\sum_{j\neq i}\irn|\lambda_{ij}||u_j|^p|u_i|^p.$$
As
\begin{align*}
&\int_{|x|\geq 1}\mu_i|u_i|^{2p}\leq\liminf_{n\to\infty}\int_{|x|\geq 1}\mu_i|u_{n,i}|^{2p}\quad\text{and}\quad\sum_{j\neq i}\irn|\lambda_{ij}||u_j|^p|u_i|^p\leq\liminf_{n\to\infty}\sum_{j\neq i}\irn|\lambda_{ij}||u_{n,j}|^p|u_{n,i}|^p
\end{align*}
from the second inequality we derive
\begin{align*}
\int_{|x|\geq 1}\mu_i|u_i|^{2p}&=\Big(\int_{|x|\geq 1}\mu_i|u_i|^{2p}+\sum_{j\neq i}\irn|\lambda_{ij}||u_j|^p|u_i|^p\Big) - \sum_{j\neq i}\irn|\lambda_{ij}||u_j|^p|u_i|^p \\
&\geq \lim_{n\to\infty}\Big(\int_{|x|\geq 1}\mu_i|u_{n,i}|^{2p}+\sum_{j\neq i}\irn|\lambda_{ij}||u_{n,j}|^p|u_{n,i}|^p\Big)+\limsup_{n\to\infty}\Big(-\sum_{j\neq i}\irn|\lambda_{ij}||u_{n,j}|^p|u_{n,i}|^p\Big) \\
&\geq \limsup_{n\to\infty}\Big(\int_{|x|\geq 1}\mu_i|u_{n,i}|^{2p}+\sum_{j\neq i}\irn|\lambda_{ij}||u_{n,j}|^p|u_{n,i}|^p-\sum_{j\neq i}\irn|\lambda_{ij}||u_{n,j}|^p|u_{n,i}|^p\Big)\\ &=\limsup_{n\to\infty}\int_{|x|\geq 1}\mu_i|u_{n,i}|^{2p}.
\end{align*}
Therefore,
$$\lim_{n\to\infty}\int_{|x|\geq 1}\mu_i|u_{n,i}|^{2p}=\int_{|x|\geq 1}\mu_i|u_i|^{2p}.$$
As a consequence, $u_{n,i}\to u_i$ strongly in $L^{2p}(\rn)$ and, thus, in $E$. It follows that
\begin{equation}\label{eq:limJ}
\lim_{n\to\infty}\cJ_{\eps_n}(\bf u_n)=\cJ_0(\bf u).
\end{equation}

To show that $\bf u$ is a least energy solution we argue by contradiction. Assume that $\cJ_0(\bf u)>c_0$. Let $\bf v\in\cN_0\cap\cC^\infty_c(\rn)^\ell$ be such that $\cJ_0(\bf u)>\cJ_0(\bf v)\geq c_0$. As $\bf v\in\cH$ and
\begin{align*}
0 <\|v_i\|^2=\irn\mu_iQ_1(x)|v_i|^{2p} + \sum\limits_{j\neq i}\irn\lambda_{ij}|v_j|^p|v_i|^p,
\end{align*}
for each $\eps_n$ there exists $\bf t_{n}=(t_{n,1},\ldots,t_{n,\ell})\in(0,\infty)^\ell$ such that $\bf t_{n}\bf v\in\cN_{\eps_n}$. As in the proof of Theorem~\ref{convergence} we see that $t_{n,i}\to 1$ for all $i$. Therefore, using~\eqref{eq:limJ} we get
$$
\cJ_0(\bf u) = \lim_{n\to\infty} \cJ_{\eps_n}(\bf u_n) \le \lim_{n\to\infty} \cJ_{\eps_n}(\bf t_n\bf v) = \lim_{n\to\infty}\Big( \cJ_0(\bf t_n\bf v)+\sum_{i=1}^\ell t_{n,i}^2\irn\eps_n^2v_i^2\Big) = \cJ_0(\bf v) < \cJ_0(\bf u).
$$
This is a contradiction.
\end{proof}

\begin{corollary} \label{concentration2}
Let $\eps_n\to 0$ and let $\bf v_n=(v_{n,1},\ldots,v_{n,\ell})$ be a nonnegative least energy solution to the system~\eqref{eq:system} with $\eps=\eps_n$. For each $\delta>0$, after passing to a subsequence, the following statements hold true:
\[
\lim_{n\to\infty} \frac{\int_{|x|\leq\delta}(|\nabla v_{n,i}|^2+v_{n,i}^2)}{\irn (|\nabla v_{n,i}|^2+v_{n,i}^2)} = 1 \qquad \text{and} \qquad \lim_{n\to\infty} \frac{\int_{|x|\leq\delta}|v_{n,i}|^{2p}}{\irn |v_{n,i}|^{2p}} = 1.
\]
\end{corollary}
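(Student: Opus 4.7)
The strategy is to mirror the proof of Corollary~\ref{concentration}, replacing the translation-and-rescaling by just a rescaling (since now $y_i=0$) and invoking Theorem~\ref{thm:limit for single core} in place of Theorem~\ref{convergence}. Concretely, I would set $u_{n,i}(x):=\eps_n^{\frac{1}{p-1}}v_{n,i}(\eps_n x)$ so that $\bf u_n=(u_{n,1},\ldots,u_{n,\ell})$ is a nonnegative least energy solution to~\eqref{eq:system2} with $\eps=\eps_n$. Theorem~\ref{thm:limit for single core} then gives, along a subsequence, $\bf u_n\to\bf u$ strongly in $E^\ell$ for a nonnegative least energy solution $\bf u=(u_1,\ldots,u_\ell)$ of~\eqref{eq:system3}. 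In particular, $\|\nabla u_{n,i}\|_2^2\to\|u_i\|^2$ and $|u_{n,i}|_{2p}^{2p}\to|u_i|_{2p}^{2p}$.

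Next, I would perform the change of variables $x\mapsto\eps_n x$ in both ratios. Using the scaling relations (exactly as in the proof of Corollary~\ref{concentration}) one obtains
\[
\frac{\int_{|x|\leq\delta}(|\nabla v_{n,i}|^2+v_{n,i}^2)}{\irn (|\nabla v_{n,i}|^2+v_{n,i}^2)} = \frac{\int_{|x|\leq\delta/\eps_n}(|\nabla u_{n,i}|^2+\eps_n^2u_{n,i}^2)}{\irn (|\nabla u_{n,i}|^2+\eps_n^2u_{n,i}^2)},\qquad\frac{\int_{|x|\leq\delta}|v_{n,i}|^{2p}}{\irn |v_{n,i}|^{2p}} = \frac{\int_{|x|\leq\delta/\eps_n}|u_{n,i}|^{2p}}{\irn |u_{n,i}|^{2p}}.
\]
Since $\delta/\eps_n\to\infty$ and $u_{n,i}\to u_i$ strongly in $L^{2p}(\rn)$, the second ratio tends to $1$ immediately. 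For the first ratio, it remains only to show that $\eps_n^2|u_{n,i}|_2^2\to 0$; this is the analogue of formula~\eqref{eq:reference} in the single-core setting.

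The one point that requires a small argument (and is the only place where I have to do something beyond citing Theorem~\ref{thm:limit for single core}) is establishing $\eps_n^2|u_{n,i}|_2^2\to 0$. For this I would combine the identity $\cJ_{\eps_n}(\bf u_n)=\frac{p-1}{2p}\|\bf u_n\|_{\eps_n}^2$ on $\cN_{\eps_n}$ with the analogous identity $\cJ_0(\bf u)=\frac{p-1}{2p}\|\bf u\|^2$ on $\cN_0$ (derived by summing the Nehari constraints and substituting into $\cJ_0$). Passing to the limit in~\eqref{eq:limJ} and using the strong $D^{1,2}$-convergence $\|u_{n,i}\|^2\to\|u_i\|^2$ yields $\eps_n^2|u_{n,i}|_2^2=\|u_{n,i}\|_{\eps_n}^2-\|u_{n,i}\|^2\to 0$ for each $i$. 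Once this is in hand, both the numerator and the denominator of the first ratio converge to $\|u_i\|^2>0$, and the conclusion follows.

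I do not anticipate any real obstacle: the whole proof is a routine transcription of the argument for Corollary~\ref{concentration}, with the verification of $\eps_n^2|u_{n,i}|_2^2\to 0$ being the only mildly nontrivial ingredient, and that follows directly from the strong $E$-convergence asserted by Theorem~\ref{thm:limit for single core} together with the Nehari identity.
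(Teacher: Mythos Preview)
Your proposal is correct and follows essentially the same approach as the paper: the paper's proof simply rescales, invokes Theorem~\ref{thm:limit for single core}, and then says ``arguing as in Corollary~\ref{concentration}.'' The only extra detail you spell out is the verification of $\eps_n^2|u_{n,i}|_2^2\to 0$; note that your Nehari-identity argument via~\eqref{eq:limJ} controls the \emph{sum} over $i$, so you should add that nonnegativity of each summand then forces each term to vanish (alternatively, this drops out componentwise from the chain of inequalities in the proof of Theorem~\ref{thm:limit for single core}, which gives $\|u_{n,i}\|_{\eps_n}^2\to\|u_i\|^2$ directly).
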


\begin{proof}
Let $\bf u_n=(u_{n,1},\ldots,u_{n,\ell})$ be given by $u_{n,i}(x) = \eps_n^{\frac1{p-1}}v_{n,i}(\eps_n x)$. Then $\bf u_n$ is a least energy solution to the system~\eqref{eq:system2} and, by Theorem~\ref{thm:limit for single core}, $u_{n,i}\to u_i$ strongly in $E$. The result follows from this fact arguing as in Corollary~\ref{concentration}.
\end{proof}
\medskip

\begin{proof}[Proof of Theorem~\ref{thm:coupled}]
 This follows immediately from Theorem~\ref{thm:limit for single core} and Corollary~\ref{concentration2}.
\end{proof}
\medskip

As for the single equation \cite[Theorem 1.5]{chs}, the following decay estimate holds true for the solutions of the limit system. Recall that $N\geq 3$.

\begin{proposition}\label{prop:decay}
Let $\bf u=(u_1,\ldots,u_\ell)$ be a nonnegative solution of the limit system~\eqref{eq:system3}. Then there exists $\kappa>0$ such that
\begin{align*}
 u_i(x) \le \kappa|x|^{2-N}\qquad \text{for a.e. \ }x\in \rn\smallsetminus B_1(0).
\end{align*}
\end{proposition}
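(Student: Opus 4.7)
The plan is to exploit the sign structure of the equation in the exterior region $\rn\setminus\overline{B_1(0)}$, where $Q_1\equiv -1$, and compare $u_i$ with a harmonic barrier of the form $\kappa|x|^{2-N}$. On this region the equation reads
$$
-\Delta u_i = -\mu_i u_i^{2p-1} + \sum_{j\neq i}\lambda_{ij}|u_j|^p u_i^{p-1},
$$
and since $u_i\geq 0$ and $\lambda_{ij}<0$, both terms on the right-hand side are nonpositive, so $u_i$ is weakly subharmonic on $\rn\setminus\overline{B_1(0)}$. By the uniform $L^\infty$-estimate proved in the Appendix, $M:=\max_i\|u_i\|_\infty<\infty$. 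I would set $\kappa:=M$ and $\phi(x):=\kappa|x|^{2-N}$, which is harmonic on $\rn\setminus\{0\}$ and satisfies $\phi\geq u_i$ on $\partial B_1(0)$.

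Next I would verify that $u_i(x)\to 0$ as $|x|\to\infty$, which is needed to close the comparison at infinity. This follows from the mean value inequality for nonnegative subharmonic functions: for $|x|\geq 2$ the ball $B_1(x)$ lies in the subharmonicity region, so
$$
u_i(x)\leq \fint_{B_1(x)} u_i(y)\,dy \leq C\,\|u_i\|_{L^{2p}(B_1(x))},
$$
and the right-hand side tends to $0$ since $u_i\in L^{2p}(\rn)$. Given $\eps>0$, I would choose $R>1$ so large that $u_i\leq \eps$ on $\{|x|=R\}$ and apply the weak maximum principle on the annulus $A_R:=\{1<|x|<R\}$ to the superharmonic function $v:=\phi-u_i$. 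Since $v\geq 0$ on $\partial B_1(0)$ and $v\geq -\eps$ on $\partial B_R$, the maximum principle yields $v\geq -\eps$ on $A_R$; letting $R\to\infty$ and then $\eps\to 0$ gives $u_i(x)\leq \kappa|x|^{2-N}$ on $\rn\setminus B_1(0)$.

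The delicate point is the rigorous application of the weak maximum principle to the subharmonic inequality, since $u_i$ is only a finite-energy weak solution. I would handle it by testing the distributional inequality $-\Delta(\phi-u_i)\geq 0$ against $(u_i-\phi-\eps)_+\,\eta^2$ for suitable smooth cut-offs $\eta$ supported in $A_R$, à la Kato/Stampacchia; this produces $\int|\nabla(u_i-\phi-\eps)_+|^2\eta^2\leq \text{boundary/tail terms}$, which vanish as $R\to\infty$ and $\eps\to 0$. This gives $(u_i-\phi)_+\equiv 0$ on the exterior, completing the proof. (Note that the argument treats each component separately: the repulsive coupling terms only help, by making the right-hand side even more negative.)
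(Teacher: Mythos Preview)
Your proposal is correct and follows the same strategy as the paper: compare $u_i$ with the harmonic barrier $\kappa|x|^{2-N}$, using that the right-hand side of the equation is nonpositive on $\rn\setminus B_1(0)$. The paper's execution is more direct---it tests the inequality $-\Delta(\kappa|x|^{2-N}-u_i)\geq 0$ against $(\kappa|x|^{2-N}-u_i)^-$ on the whole exterior in one step (both pieces have finite Dirichlet energy there and the negative part vanishes on $\partial B_1(0)$), which yields $\int|\nabla(\kappa|x|^{2-N}-u_i)^-|^2\leq 0$ without your intermediate decay-at-infinity and annulus-exhaustion arguments.
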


\begin{proof}
Let $W(x):=\kappa|x|^{2-N}$ for $|x|\geq 1$ with $\kappa:=\max\limits_{\partial B_1(0)}u_i$, which is finite by Lemma~\ref{maincont}.
 Set $W_i:=W-u_{i}$. Note that $W_i\geq 0$ on $\partial B_1(0)$. Since $\bf u$ solves~\eqref{eq:system3}, $W_i$ solves
\begin{align*}
 -\Delta W_i = \mu_i |u_{i}|^{2p-2}u_{i}-\sum\limits_{\substack{j=1\\j\neq i}}^\ell\lambda_{ij}|u_{j}|^p|u_{i}|^{p-2}u_{i}\geq 0\quad \text{in \ }\rn\smallsetminus B_1(0),\qquad W_i\geq 0\quad \text{ on }\partial B_1(0).
\end{align*}
Testing this equation with $W_i^-:=\min\{W_i,0\}\leq 0$, we get that $0\leq \int_{\rn}|\nabla W_i^-|^2\leq 0$. Thus, $W_i\geq 0$ in $\rn \smallsetminus B_1(0)$, and therefore, $0\leq u_{i}(x)\leq \kappa |x|^{2-N}$ for all $x\in \rn\smallsetminus B_1(0)$.
\end{proof}

\section{Asymptotic segregation for the limit system} \label{segregation}

Next we prove Theorem~\ref{seg:thm}, which analyzes the limit as $\lambda_{ij}\to -\infty$ of nonnegative least energy solutions of the limit system~\eqref{eq:system3}.

\begin{proof}[Proof of Theorem~\ref{seg:thm}]
To highlight the role of $\lambda_{k}$, we write $\mathcal{J}_{0,k}$ and $\mathcal{N}_{0,k}$ for the functional $\mathcal{J}_0$ and the set $\mathcal{N}_0$ associated to the system~\eqref{eq:system3} with $\lambda_{ij}=\lambda_{k}$ for all $i\neq j$; see Section~\ref{sec:single core}. By assumption,
$$c_{0,k}:= \inf_{\mathcal{N}_{0,k}} \mathcal{J}_{0,k} =\mathcal{J}_{0,k}(\bf u_k)=\frac{p-1}{2p}\sum_{i=1}^\ell\|u_{k,i}\|^2,$$
where the norm $\|u_{k,i}\|$ is given in~\eqref{norm}.
We define
\begin{align*}
\mathcal{M}:=\{(v_1,\ldots,v_\ell)\in E^\ell:v_i\neq 0,\;\|v_i\|^2=\int_{\mathbb R^N}\mu_iQ_1(x)|v_i|^{2p}, \text{ and }v_iv_j=0\text{ a.e. in }\mathbb{R}^N \text{ if }i\neq j\}.
\end{align*}
Then, $\mathcal{M}\subset\mathcal{N}_{0,k}$ for all $k\in\mathbb{N}$ and, as a consequence,
$$0<c_{0,k}\leq c_*:=\inf\Big\{\frac{p-1}{2p}\sum_{i=1}^\ell\|v_i\|^2:(v_1,\ldots,v_\ell)\in\mathcal{M}\Big\}<\infty.$$
This shows that $(u_{k,i})$ is bounded in $D^{1,2}(\mathbb{R}^N)$ and arguing as in~\eqref{2pbd} we see that $(u_{k,i})$ is also bounded in $L^{2p}(\mathbb{R}^N)$. So, after passing to a subsequence, $u_{k,i} \rightharpoonup u_{\infty,i}$ weakly in $E$, $u_{k,i} \to u_{\infty,i}$ strongly in $L_{loc}^{2p}(\mathbb{R}^N)$ and $u_{k,i} \to u_{\infty,i}$ a.e. in $\mathbb{R}^N$, for each $i=1,\ldots,\ell$. Hence, $u_{\infty,i} \geq 0$. Moreover, as $\partial_i\mathcal{J}_{0,k}(\bf u_k)[u_{k,i}]=0$, we have that, for each $j\neq i$,
\begin{align*}
0&
\leq |\lambda_{k}| \int_{\mathbb{R}^N}|u_{k,j}|^{p}|u_{k,i}|^{p}
\leq \mu_i\int_{|x|<1} |u_{k,i}|^{2p}\leq C\qquad\text{for all \ }k\in\n.
\end{align*}
As $|\lambda_{k}|\to\infty$, using Fatou's lemma we obtain
$$0 \leq \int_{\mathbb{R}^N}|u_{\infty,j}|^{p}|u_{\infty,i}|^{p} \leq \liminf_{k \to \infty} \int_{\mathbb{R}^N}|u_{k,j}|^{p}|u_{k,i}|^{p} = 0.$$
Therefore, $u_{\infty,j} u_{\infty,i} = 0$ a.e. in $\mathbb{R}^N$. On the other hand, arguing as in~\eqref{sobolev}-\eqref{lbd:sob}, there is a constant $d_0>0$ such that
$$0<d_0 \leq \|u_{k,i}\|^2 \leq  \int_{|x|<1} |u_{k,i}|^{2p}\qquad\text{for all \ }k\in\mathbb{N},\qquad i=1,\ldots,\ell.$$
As $u_{k,i} \to u_{\infty,i}$ in $L_{loc}^{2p}(\mathbb{R}^N)$ this implies that $u_{\infty,i}\neq 0$. Furthermore, using Fatou's Lemma
we obtain
\begin{align} \label{eq:subnehari}
\|u_{\infty,i}\|^2 & \leq \liminf_{k\to\infty}\|u_{k,i}\|^2 \leq \limsup_{k\to\infty}\|u_{k,i}\|^2 \leq \limsup_{k\to\infty}\Big(\irn\mu_i Q_1(x)|u_{k,i}|^{2p}\Big)\nonumber \\
&\leq \lim_{k\to\infty}\int_{|x|<1}\mu_i|u_{k,i}|^{2p}-\liminf_{k\to\infty}\int_{|x|\geq 1}\mu_i|u_{k,i}|^{2p} \nonumber \\
&\leq \int_{|x|<1}\mu_i|u_{\infty,i}|^{2p} - \int_{|x|\geq 1}\mu_i|u_{\infty,i}|^{2p} =\irn \mu_i Q_1(x)|u_{\infty,i}|^{2p},
\end{align}
for all $i=1,\ldots,\ell$. Hence, there is a unique $(t_1,\ldots,t_\ell)\in(0,1]^\ell$ such that $(t_1u_{\infty,1},\ldots,t_\ell u_{\infty,\ell})\in \mathcal{M}$. It follows that
\begin{align*}
c_* &\leq \frac{p-1}{2p}\sum_{i=1}^\ell\|t_iu_{\infty,i}\|^2 \leq \frac{p-1}{2p}\sum_{i=1}^\ell\|u_{\infty,i}\|^2\leq \frac{p-1}{2p}\liminf_{k\to\infty}\sum_{i=1}^\ell\|u_{k,i}\|^2=\liminf_{k\to\infty} c_{0,k} \leq c_*.
\end{align*}
Hence, $u_{k,i} \to u_{\infty,i}$ strongly in $D^{1,2}(\mathbb{R}^N)$ and  $t_i=1$, yielding
\begin{equation}\label{eq:limit}
\|u_{\infty,i}\|^2 = \int_{\rn} \mu_iQ_1(x)|u_{\infty,i}|^{2p}\qquad\text{and}\qquad\frac{p-1}{2p}\sum_{i=1}^\ell\|u_{\infty,i}\|^2=c^*.
\end{equation}
Combining the first identity with~\eqref{eq:subnehari}, since $u_{k,i} \to u_{\infty,i}$ in $L_{loc}^{2p}(\mathbb{R}^N)$, we get that
$u_{k,i} \to u_{\infty,i}$ strongly in $L^{2p}(\mathbb{R}^N)$, and then $u_{k,i} \to u_{\infty,i}$ strongly in $E$.

By Lemma~\ref{maincont}, we have that $(u_{k,i})$ is uniformly bounded in $L^{\infty}(\rn)$. Then, by \cite[Theorem 1.2]{sttz}, for every bounded domain $U$ in $\rn$ and every $\alpha\in(0,1)$ there is $C=C(U,\alpha)>0$ such that
\begin{align*}
 \|u_{k,i}\|_{\cC^{0,\alpha}(U)}<C,
\end{align*}
where $\| \cdot \|_{\cC^{0,\alpha}(U)}$ denotes the usual norm in the space of $\alpha$-Hölder-continuous functions. This implies that $u_{\infty,i}$ is continuous.

Let $\o_i:=\{x\in\rn:u_{\infty,i}(x)>0\}$. This is an open set. As $u_{\infty,i}u_{\infty,j}=0$ we have that $\o_i\cap\o_j=\emptyset$ if $i\neq j$. The first identity in~\eqref{eq:limit} shows that $u_{\infty,i}$ belongs to the Nehari manifold
\begin{equation} \label{neh2}
\cN_{\o_i}:=\Big\{w\in E_i:w\neq 0, \ \|w\|^2=\int_{\o_i}Q_1(x)|w|^{2p}\Big\},
\end{equation}
where $E_i:=D_0^{1,2}(\Omega_i)\cap L^{2p}(\Omega_i)$. From the second identity in~\eqref{eq:limit} one easily derives that
$$\frac{p-1}{2p}\|u_{\infty,i}\|^2=\inf_{w\in\cN_{\o_i}}\frac{p-1}{2p}\|w\|^2\qquad\text{for every \ }i=1,\ldots,\ell.$$
This shows that $u_{\infty,i}$ is a least energy solution of~\eqref{eq:omega_i}, as claimed. Finally, \eqref{neh2} implies $\o_i\cap B_1(0)\ne\emptyset$.
\end{proof}

\section{Concentration at higher energy levels} \label{sec:conc}

In Corollaries~\ref{concentration} and~\ref{concentration2} we have shown that nonnegative least energy solutions to~\eqref{eq:system} concentrate as $\eps_n\to 0$. This was an easy consequence of the existence of limit profiles. Here we shall show that concentration occurs for any sequence $(v_{n,i})$ of solutions as $\eps_n\to 0$. The proof is a modification of the argument of \cite[Theorem 2.6]{as}. Let the norm of $v\in H^1(\rn)$ be given by
\[
\|v\|_1^2 := \irn(|\nabla v|^2+v^2).
\] 

\begin{lemma} \label{infty}
Let $\eps_n\to 0$ and let $\bf v_n = (v_{n,1},\ldots,v_{n,\ell})$ be a solution to~\eqref{eq:system} with $\eps=\eps_n$. Then $\|v_{n,i}\|_1\to\infty$ for $i=1,\ldots,\ell$.
\end{lemma}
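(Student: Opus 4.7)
The plan is to reduce the claim to the uniform lower bound on $\cN_\eps$ already established in~\eqref{lbd:sob}, by exploiting the same rescaling that was used to pass from~\eqref{eq:system} to~\eqref{eq:system2}.

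First I would set $u_{n,i}(x):=\eps_n^{1/(p-1)}v_{n,i}(\eps_n x)$, so that $\bf u_n=(u_{n,1},\ldots,u_{n,\ell})$ solves the rescaled system~\eqref{eq:system2} with $\eps=\eps_n$ and, in particular, belongs to $\cN_{\eps_n}$. The uniform lower bound~\eqref{lbd:sob} then yields $\|u_{n,i}\|_{\eps_n}^2\ge C_0>0$, with $C_0$ independent of $n$ and $i$.

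Next I would relate the two norms by a direct computation with the change of variable $y=\eps_n x$:
\[
\irn|\nabla u_{n,i}|^2=\eps_n^{\,2+\frac{2}{p-1}-N}\irn|\nabla v_{n,i}|^2,\qquad \eps_n^2\irn u_{n,i}^2=\eps_n^{\,2+\frac{2}{p-1}-N}\irn v_{n,i}^2.
\]
Adding these identities gives $\|u_{n,i}\|_{\eps_n}^2=\eps_n^{\,2+\frac{2}{p-1}-N}\|v_{n,i}\|_1^2$, equivalently
\[
\|v_{n,i}\|_1^2=\eps_n^{\,N-2-\frac{2}{p-1}}\,\|u_{n,i}\|_{\eps_n}^2\ge C_0\,\eps_n^{\,N-2-\frac{2}{p-1}}.
\]

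Finally, I would observe that the subcriticality assumption $p<2^*/2=N/(N-2)$ is equivalent to $p-1<\tfrac{2}{N-2}$, hence to $N-2-\tfrac{2}{p-1}<0$. Consequently, $\eps_n^{\,N-2-\frac{2}{p-1}}\to\infty$ as $\eps_n\to 0$, and we conclude $\|v_{n,i}\|_1\to\infty$, for each $i=1,\ldots,\ell$.

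I do not anticipate any real obstacle here: the only point worth emphasizing is that the exponent in the rescaling becomes negative precisely under the subcritical hypothesis $2p<2^*$, which is what forces the $H^1$-norm of any solution of~\eqref{eq:system} to blow up as the region of attraction shrinks.
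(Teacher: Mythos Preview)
Your argument is correct, but it proceeds differently from the paper's proof. The paper works directly with the original system~\eqref{eq:system}: multiplying by $v_{n,i}$ gives $\|v_{n,i}\|_1^2\le\mu_i\int_{|x-y_i|<\eps_n}|v_{n,i}|^{2p}$, and a contradiction argument (if $(v_{n,i})$ were bounded in $H^1$, then bounded in $L^{2^*}$, and by H\"older the right-hand side would tend to $0$ as the ball shrinks) finishes the job. Your route instead transfers the question to the rescaled system~\eqref{eq:system2}, invokes the already-proved uniform Nehari lower bound~\eqref{lbd:sob}, and reads off the blow-up from the explicit scaling factor. What your approach buys is a quantitative rate: you actually show $\|v_{n,i}\|_1^2\ge C_0\,\eps_n^{\,N-2-\frac{2}{p-1}}$, which is stronger than the bare divergence statement and makes transparent that the subcritical condition $2p<2^*$ is exactly what forces the exponent to be negative. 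The paper's argument, on the other hand, is self-contained and does not rely on the rescaled problem or on the preliminary bound~\eqref{lbd:sob}.
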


\begin{proof}
Multiplying the equation in~\eqref{eq:system} by $v_{n,i}$ we obtain
\begin{equation} \label{equality}
\|v_{n,i}\|_1^2 = \mu_i\irn Q_{\varepsilon_n}(x-y_i)|v_{n,i}|^{2p} + \sum\limits_{\substack{j=1 \\ j\neq i}}^\ell\irn\lambda_{ij}|v_{n,j}|^p|v_{n,i}|^{p}.
\end{equation}
So, by the Sobolev inequality and since $\lambda_{ij}<0$,
\begin{equation} \label{lpnorm}
\|v_{n,i}\|_1^2 \le \mu_i\irn|v_{n,i}|^{2p} \le C\|v_{n,i}\|_1^{2p}
\end{equation}
for some constant $C>0$ independent of $n$. Hence, $\|v_{n,i}\|_1\ge \alpha$ for some $\alpha>0$.

Now we argue by contradiction. Assume that the sequence $(v_{n,i})$ is bounded in $H^1(\rn)$ and let $q=2^*$. By the H\"older inequality and as $(v_{n,i})$ is bounded in $L^q(\rn)$,
\[
0<\alpha^2\leq\|v_{n,i}\|_1^2 \le \mu_i\int_{|x-y_i|<\eps_n}|v_{n,i}|^{2p} \le \mu_i\,(\text{meas}\{|x-y_i|<\eps_n\})^\frac{1}{r}\Big(\irn|v_{n,i}|^q\Big)^\frac{2p}q \to 0,
\]
where  $r$ satisfies $\frac1r+\frac{2p}q=1$. This is a contradiction.
\end{proof}

\begin{lemma} \label{weakto0}
Let $\eps_n$ and $\bf v_n$ be as in Lemma \ref{infty} and let $z_{n,i} := \frac{v_{n,i}}{\|v_{n,i}\|_1}$. Then $z_{n,i}\rh 0$ weakly in $H^1(\rn)$.
\end{lemma}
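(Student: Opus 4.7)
The plan is to actually prove the stronger statement that $z_{n,i}\to 0$ strongly in $L^{2p}(\rn)$; weak convergence to $0$ in $H^1(\rn)$ will then follow from a standard subsequence-of-subsequence argument, since $\|z_{n,i}\|_1=1$ keeps the sequence bounded in $H^1(\rn)$.

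The key observation is to exploit the competitive sign $\lambda_{ij}<0$ together with the repulsive sign of $Q_{\eps_n}$ outside $B_{\eps_n}(y_i)$. Dropping the nonpositive coupling sum in~\eqref{equality} gives
\begin{align*}
 \|v_{n,i}\|_1^2 \;\le\; \mu_i\int_{|x-y_i|<\eps_n}|v_{n,i}|^{2p} \;-\; \mu_i\int_{|x-y_i|\ge\eps_n}|v_{n,i}|^{2p},
\end{align*}
which forces $\int_{|x-y_i|\ge\eps_n}|v_{n,i}|^{2p}\le\int_{|x-y_i|<\eps_n}|v_{n,i}|^{2p}$ and hence
\begin{align*}
 \irn|v_{n,i}|^{2p} \;\le\; 2\int_{|x-y_i|<\eps_n}|v_{n,i}|^{2p}.
\end{align*}
Bounding the right-hand side by Hölder and Sobolev exactly as in the proof of Lemma~\ref{infty} (cf.~\eqref{lpnorm}), with exponent $r$ chosen so that $\tfrac{1}{r}+\tfrac{2p}{2^*}=1$ (positive since $2p<2^*$), I obtain
\begin{align*}
 \int_{|x-y_i|<\eps_n}|v_{n,i}|^{2p} \;\le\; C\,\eps_n^{N/r}\,\|v_{n,i}\|_1^{2p}
\end{align*}
with $C$ independent of $n$. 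Dividing the previous display through by $\|v_{n,i}\|_1^{2p}$ then gives
\begin{align*}
 \irn|z_{n,i}|^{2p} \;\le\; 2C\,\eps_n^{N/r}\longrightarrow 0.
\end{align*}

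To finish, the boundedness of $(z_{n,i})$ in $H^1(\rn)$ allows extraction of a weakly convergent subsequence $z_{n,i}\rh z$ in $H^1(\rn)$; Rellich gives $z_{n,i}\to z$ in $L^{2p}_{\mathrm{loc}}(\rn)$ (using $2p<2^*$), and the strong $L^{2p}(\rn)$-convergence to $0$ just established forces $z=0$. Since every subsequence has a further subsequence weakly tending to $0$, the full sequence satisfies $z_{n,i}\rh 0$ in $H^1(\rn)$. I do not foresee any real obstacle here: the whole argument rests on the sign $\lambda_{ij}<0$ and on the vanishing volume $|B_{\eps_n}(y_i)|\to 0$, with Sobolev doing the rest.
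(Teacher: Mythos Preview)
Your argument is correct and in fact proves more than the paper does: you establish strong convergence $z_{n,i}\to 0$ in $L^{2p}(\rn)$ with an explicit rate $\eps_n^{N/r}$, whereas the paper only obtains the weak $H^1$ limit. The paper's route is indirect: it divides~\eqref{equality} by $\|v_{n,i}\|_1^2$ to obtain~\eqref{equals1}, observes that the bracketed expression must tend to $0$ because $\|v_{n,i}\|_1^{2p-2}\to\infty$, and then argues by contradiction via Fatou's lemma that a nonzero weak limit $z_i$ would force this bracket to have a strictly negative $\limsup$. Your approach instead drops the coupling terms immediately, uses the sign structure of $Q_{\eps_n}$ to trap the full $L^{2p}$ mass by twice the mass on the shrinking ball, and closes with H\"older--Sobolev exactly as in Lemma~\ref{infty}. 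This is more direct, avoids the contradiction step, and yields a quantitative bound; the paper's version has the minor advantage of not needing to revisit the H\"older--Sobolev computation, but otherwise your argument is the cleaner of the two.
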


\begin{proof}
Passing to a subsequence, $z_{n,i}\rh z_i$ weakly in $H^1(\rn)$, $z_{n,i}\to z_i$ strongly in $L^{2p}_{loc}(\rn)$ and a.e. in $\rn$. Dividing~\eqref{equality} by $\|v_{n,i}\|_1^2$ we get
\begin{equation} \label{equals1}
1 = \|z_{n,i}\|_1^2 = \mu_i\|v_{n,i}\|_1^{2p-2}\Big(\irn Q_{\eps_n}(x-y_i)|z_{n,i}|^{2p} + \|v_{n,i}\|_1^{-2p}\sum\limits_{j\neq i}\irn\lambda_{ij}|v_{n,j}|^p|v_{n,i}|^p\Big).
\end{equation}
Since $\|v_{n,i}\|_1\to\infty$, the above equality implies that
$$\irn Q_{\eps_n}(x-y_i)|z_{n,i}|^{2p} + \|v_{n,i}\|_1^{-2p}\sum\limits_{j\neq i}\irn\lambda_{ij}|v_{n,j}|^p|v_{n,i}|^p\to 0.$$ 
If $z_i\ne 0$, we can choose $\eps_0$ such that $\int_{|x-y_i|\ge \eps_0}|z_i|^{2p} > 0$. Then, for all $n$ large enough, using Fatou's lemma we obtain
\begin{align*}
0 & = \lim_{n\to\infty}\Big(\irn Q_{\eps_n}(x-y_i)|z_{n,i}|^{2p} + \|v_{n,i}\|_1^{-2p}\sum\limits_{j\neq i}\irn\lambda_{ij}|v_{n,j}|^p|v_{n,i}|^p\Big) \\
&\leq\limsup_{n\to\infty} \Big(\int_{|x-y_i|<\eps_n} |z_{n,i}|^{2p} - \int_{|x-y_i|\ge\eps_n}|z_{n,i}|^{2p}\Big) \le - \int_{|x-y_i|\ge\eps_0}|z_i|^{2p} < 0, 
\end{align*}
a contradiction. Hence $z_i$ must be 0.
\end{proof}
\medskip

\begin{proof}[Proof of Theorem~\ref{thm:concentr}]
Fix $\delta>0$ and let $\chi\in \cC^{\infty}(\rn)$ be such that $\chi(x)=0$ for $|x-y_i|<\frac{\delta}2$, $\chi(x) = 1$ for $|x-y_i|>\delta$. Multiplying the equation in~\eqref{eq:system} by $\chi v_{n,i}$ and using $\lambda_{ij}<0$ gives
\[
\irn(\nabla v_{n,i}\cdot\nabla(\chi v_{n,i}) + \chi v_{n,i}^2)  \le \mu_i\irn \chi Q_{\varepsilon_n}(x-y_i)|v_{n,i}|^{2p},
\]
or, equivalently,
\[
\irn\chi(|\nabla v_{n,i}|^2+v_{n,i}^2) - \mu_i\irn \chi Q_{\varepsilon_n}(x-y_i)|v_{n,i}|^{2p} \le -\irn v_{n,i}\nabla\chi\cdot \nabla v_{n,i}.
\]
As $Q_{\eps_n} = -1$ on the support of $\chi$ if $n$ is large, for such $n$ we have
\begin{align} \label{geeps}
\int_{|x-y_i|>\delta}(|\nabla v_{n,i}|^2+v_{n,i}^2) + \mu_i \int_{|x-y_i|>\delta}|v_{n,i}|^{2p} & \le \irn\chi(|\nabla v_{n,i}|^2+v_{n,i}^2) + \mu_i \irn\chi|v_{n,i}|^{2p} \nonumber \\
& \le C\int_{\delta/2<|x-y_i|<\delta}|v_{n,i}|\,|\nabla v_{n,i}|, 
\end{align}
where $C= \max_{\rn}(|\nabla\chi|)$.
Since $z_{n,i}\to 0$ in $L^2_{loc}(\rn)$ according to Lemma~\ref{weakto0}, it follows, using H\"older's inequality, that
\[
\int_{\delta/2<|x-y_i|<\delta}|z_{n,i}|\,|\nabla z_{n,i}| \to 0 .
\]
Hence,~\eqref{geeps} implies that
\[
\lim_{n\to\infty}\Big(\int_{|x-y_i|>\delta}(|\nabla z_{n,i}|^2+z_{n,i}^2) + \mu_i \|v_{n,i}\|_1^{2p-2}\int_{|x-y_i|>\delta}|z_{n,i}|^{2p}\Big) = 0.  
\]
So,
\begin{equation} \label{lim0}
\lim_{n\to\infty} \int_{|x-y_i|>\delta}(|\nabla z_{n,i}|^2+z_{n,i}^2) = 0 \quad \text{and} \quad \lim_{n\to\infty} \|v_{n,i}\|_1^{2p-2}\int_{|x-y_i|>\delta}|z_{n,i}|^{2p} = 0.
\end{equation}
As $\|z_{n,i}\|_1=1$, using the first limit above we obtain that
\[
1 = \lim_{n\to\infty}\frac{\irn(|\nabla z_{n,i}|^2+z_{n,i}^2)}{\irn(|\nabla z_{n,i}|^2+z_{n,i}^2)} = \lim_{n\to\infty}\frac{\int_{|x-y_i|\le\delta}(|\nabla z_{n,i}|^2+z_{n,i}^2)}{\irn(|\nabla z_{n,i}|^2+z_{n,i}^2)} = \lim_{n\to\infty} \frac{\int_{|x-y_i|\le\delta}(|\nabla v_{n,i}|^2+v_{n,i}^2)}{\irn(|\nabla v_{n,i}|^2+v_{n,i}^2)}.
\]
Since 
\[
\irn Q_{\eps_n}(x-y_i)|z_{n,i}|^{2p} \le \irn |z_{n,i}|^{2p},
\]
it follows from~\eqref{equals1} that $\|v_{n,i}\|_1^{2p-2}\irn |z_{n,i}|^{2p}$ is bounded away from 0. Hence, the second limit in~\eqref{lim0} implies that
\begin{align}\label{aux:eq}
1 = \lim_{n\to\infty}\frac{\|v_{n,i}\|_1^{2p-2}\irn |z_{n,i}|^{2p}}{\|v_{n,i}\|_1^{2p-2}\irn |z_{n,i}|^{2p}} = \lim_{n\to\infty}\frac{\|v_{n,i}\|_1^{2p-2}\int_{|x-y_i|\le\delta} |z_{n,i}|^{2p}}{\|v_{n,i}\|_1^{2p-2}\irn |z_{n,i}|^{2p}} =  \lim_{n\to\infty}\frac{\int_{|x-y_i|\le\delta} |v_{n,i}|^{2p}}{\irn |v_{n,i}|^{2p}}.
\end{align}
This completes the proof of Theorem~\ref{thm:concentr}.
\end{proof}

\begin{remark}
\emph{
By~\eqref{lpnorm} and Lemma~\ref{infty}, we have that $\irn |v_{n,i}|^{2p}\to\infty$ as $n\to\infty$. Then, by \eqref{aux:eq}, $\int_{|x-y_i|\le\delta} |v_{n,i}|^{2p}\to \infty$ and, therefore, $\sup_{B_\delta(y_i)}|v_{n,i}|\to \infty$ for each $\delta>0$.
}
\end{remark}

\begin{remark}\label{decay}
\emph{Let $\eps_n\to 0$, let $\bf v_n = (v_{n,1},\ldots,v_{n,\ell})$ be a nonnegative solution of the system~\eqref{eq:system} with $\eps=\eps_n$, set $w_{n,i}(x):=\eps_n^\frac{1}{p-1}v_{n,i}(\eps_nx+y_i)$ and assume that $w_{n,i}\to w_{\infty,i}$ in $L^{2p}(\rn)$ for some $w_{\infty,i}\in L^{2p}(\rn)$ for $i=1,\ldots, \ell$.  If $2p\in(\frac{2N-2}{N-2},2^*)$, then
\begin{center}
$v_{n,i}\to 0$ in $L^{2p}(\rn\smallsetminus B_\delta(y_i))$ for all $\delta>0$.
\end{center}
 Indeed, let $C>0$ denote possibly different constants independent of $n$. By Lemma~\ref{maincont} and a simple comparison argument (as in Proposition~\ref{prop:decay}), $w_{n,i}(x)\leq C|x|^{2-N}$ for $x\in\rn$ and $n\in \mathbb N$. Then,
$$\int_{|x|>\frac{\delta}{\eps_n}}|w_{n,i}|^{2p}\leq C\int_{\frac{\delta}{\eps_n}}^\infty r^{(2-N)2p+N-1} \mathrm{d} r=C\eps_n^{(N-2)2p-N},$$
where we used that $(N-2)2p-N>0$, because $2p>\frac{2N-2}{N-2}>\frac N{N-2}$ for $N\geq 3$. Then,
$$\int_{|x-y_i|>\delta}|v_{n,i}|^{2p}=\eps_n^{N-\frac{2p}{p-1}}\int_{|x|>\frac{\delta}{\eps_n}}|w_{n,i}|^{2p}\leq C\eps_n^{((N-2)(p-1)-1)\frac{2p}{p-1}}\to 0,$$
because $(N-2)(p-1)-1>0$ if $p>\frac{N-1}{N-2}$.}
\end{remark}

\appendix

\section{A uniform bound for the limit system}

In this appendix we apply well known regularity arguments to show the uniform boundedness property used in the proof of Theorem~\ref{seg:thm} and in Remark \ref{decay}.

We write $B_1:=B_1(0)$ and $| \cdot |_{r;B_1}$ for the norm in $L^r(B_1)$, $r\in[1,\infty]$.

\begin{lemma}\label{maincont}
Let $N\geq 3$ and, for each $i,j=1,\ldots,\ell$, $i\neq j$, let $(\lambda_{ij,k})$ be a sequence of negative numbers, $(\eps_k)$ a sequence of real numbers, and $\bf u_{k}=(u_{k,1},\ldots,u_{k,\ell})$ be a nonnegative solution of the system
\begin{equation}\label{eq:ap}
-\Delta u_{k,i} +\eps_k^2 u_{k,i}= \mu_iQ_1(x)|u_{k,i}|^{2p-2}u_{k,i} + \sum\limits_{\substack{j=1 \\ j\neq i}}^\ell\lambda_{ij,k}|u_{k,i}|^p|u_{k,i}|^{p-2}u_{k,i},\quad u_{k,i}\neq 0,\quad i=1,\ldots,\ell.
\end{equation}
Here, $u_{k,i}\in E$ if $\eps_k=0$ and $u_{k,i}\in H^1(\rn)$ if $\eps^2_k>0$. Assume that $u_{k,i} \to u_{\infty,i}$ strongly in $L^{2p}(\rn)$ for every $i=1,\ldots,\ell$. Then $(u_{k,i})$ is uniformly bounded in $L^{\infty}(\rn)$ for every $i=1,\ldots,\ell$.
\end{lemma}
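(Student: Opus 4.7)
My plan is to apply Moser iteration to a suitable subsolution inequality. Since $u_{k,j}\ge 0$, $\lambda_{ij,k}<0$, and $\eps_k^2 u_{k,i}\ge 0$, dropping the favorable terms in~\eqref{eq:ap} yields the weak differential inequality
\begin{equation*}
-\Delta u_{k,i}\le\mu_i Q_1(x)\,u_{k,i}^{2p-1}\le\mu_i\,u_{k,i}^{2p-1}\qquad\text{in \ }\rn,
\end{equation*}
tested against nonnegative functions in $E$ (if $\eps_k=0$) or $H^1(\rn)$ (if $\eps_k>0$). It therefore suffices to prove the following $k$-free statement: every nonnegative $v\in E\cup H^1(\rn)$ satisfying $-\Delta v\le \mu v^{2p-1}$ in the corresponding weak sense obeys
\begin{equation*}
|v|_\infty\le A\,|v|_{2p}^{\,B+1},
\end{equation*}
with $A,B>0$ depending only on $\mu$, $p$, and $N$. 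Applied to each component $u_{k,i}$ with $\mu=\mu_i$, and combined with the uniform $L^{2p}$ bound coming from the strong convergence $u_{k,i}\to u_{\infty,i}$ in $L^{2p}(\rn)$, this immediately gives the conclusion.

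For the iteration I fix $\beta\ge 0$, $M>0$, set $v^M:=\min(v,M)$, and test the subsolution inequality against $\varphi:=v\,(v^M)^{2\beta}$. Since $|\varphi|\le M^{2\beta}v$ and $|\nabla\varphi|\le (2\beta+1)M^{2\beta}|\nabla v|$, $\varphi$ is admissible in either setting. Splitting integration over $\{v\le M\}\cup\{v>M\}$ (where $\nabla v^M=0$) and using $(\beta+1)^2\le(\beta+1)(2\beta+1)$, a direct calculation yields
\begin{equation*}
\bigl|\nabla(v(v^M)^\beta)\bigr|_{2}^{2}\le(\beta+1)\mu\irn v^{2p-2}\bigl(v(v^M)^\beta\bigr)^{2}.
\end{equation*}
Setting $w:=v(v^M)^\beta$, applying the Sobolev inequality $|w|_{2^*}^{2}\le S|\nabla w|_2^{2}$, and using the H\"older split
\begin{equation*}
\irn v^{2p-2}w^{2}\le|v|_{2p}^{2p-2}|w|_{2p}^{2}
\end{equation*}
(with exponents $p/(p-1)$ and $p$, admissible precisely because $p<2^*/2$), then letting $M\to\infty$ by monotone convergence, I arrive at
\begin{equation*}
|v|_{(\beta+1)2^*}^{\,2(\beta+1)}\le S\mu(\beta+1)\,|v|_{2p}^{2p-2}\,|v|_{(\beta+1)2p}^{\,2(\beta+1)}.
\end{equation*}

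Setting $\sigma:=2^*/(2p)>1$ and $q_n:=2p\sigma^n$, substituting $\beta+1=\sigma^n$ in the preceding display and extracting the $(2\sigma^n)$-th root yields the recursion
\begin{equation*}
|v|_{q_{n+1}}\le(S\mu\sigma^n)^{1/(2\sigma^n)}\,|v|_{2p}^{(p-1)/\sigma^n}\,|v|_{q_n}.
\end{equation*}
Since $\sum_n\sigma^{-n}$ and $\sum_n\sigma^{-n}\log(S\mu\sigma^n)$ are both finite, iterating from $q_0=2p$ and letting $n\to\infty$ shows $|v|_\infty=\lim_n|v|_{q_n}\le A\,|v|_{2p}^{B+1}$ with $A,B>0$ depending only on $\mu,p,N,S$, proving the $k$-free statement.

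The main obstacle is purely technical: when $\eps_k=0$ the function $u_{k,i}$ lies only in $E=D^{1,2}\cap L^{2p}$ (not in $L^2$), so unbounded powers of $u_{k,i}$ cannot be used directly as test functions. The truncation at height $M$ sidesteps this, and because the constants produced at each step of the iteration depend on none of $M$, $\eps_k$, $k$, the passage $M\to\infty$ and the recursion close up uniformly in $k$.
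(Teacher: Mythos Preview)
Your proof is correct and takes a genuinely different route from the paper's.

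The paper works locally on the ball $B_1$: it uses the compact embedding $H^1(B_1)\hookrightarrow L^{2p}(B_1)$ together with a Brezis--Kato type absorption (splitting $B_1$ into the sets where $|u_{\infty,i}|^{2p-2}$ is large/small and using the strong $L^{2p}$ convergence $u_{k,i}\to u_{\infty,i}$ to make the residual small enough to absorb into the left-hand side). This yields a recursion $|u_{k,i}|_{2(s+1);B_1}\mapsto|u_{k,i}|_{2p(s+1);B_1}$, which is iterated to an $L^\infty(B_1)$ bound; a separate maximum-principle step (testing with $(u_{k,i}-\kappa)^+$) then extends the bound to all of $\rn$.

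You instead work globally on $\rn$ from the subsolution inequality $-\Delta v\le\mu v^{2p-1}$, combine the scale-invariant Sobolev inequality $|w|_{2^*}\le S^{1/2}|\nabla w|_2$ with the H\"older split $\int v^{2p-2}w^2\le|v|_{2p}^{2p-2}|w|_{2p}^2$, and obtain the clean recursion $|v|_{q_{n+1}}\le C_n\,|v|_{q_n}$ with $q_n=2p\sigma^n$, $\sigma=2^*/(2p)>1$. This bypasses both the absorption step and the maximum-principle extension, and produces the sharper $k$-free quantitative bound $|v|_\infty\le A|v|_{2p}^{B+1}$; the hypothesis of strong $L^{2p}$ convergence is used only through the resulting uniform $L^{2p}$ bound. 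The paper's absorption argument, on the other hand, is the method that would still work at the critical exponent $2p=2^*$ (where your $\sigma=1$ and the iteration stalls), but in the present subcritical setting your approach is both shorter and more informative.
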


\begin{proof}
We adapt the arguments in \cite[Lemma B.3]{s}. Let $s\geq 0$ and assume that $u_{k,i}\in L^{2(s+1)}(B_1)$ for every $k\in\mathbb{N}$. Fix $M>0$ and define $\varphi_{k,i}:=u_{k,i}\min\{|u_{k,i}|^{2s},M^{2}\}$. Then,
\begin{equation*}
\nabla \varphi_{k,i} =\min \{|u_{k,i}|^{2s},M^{2} \}\nabla u_{k,i}+2s|u_{k,i}|^{2s}(\nabla u_{k,i})1_{A},
\end{equation*}
where $1_{A}$ is the characteristic function for the set $A:=\{x\in\rn: |u_{k,i}(x)|^{s}< M \}$. Since $\bf u_k$ solves~\eqref{eq:ap} and $u_{k,i}\varphi_{k,i}\geq 0$, we have that
\begin{align*}
\int_{\rn}\nabla u_{k,i}\cdot \nabla \varphi_{k,i}
&=  \int_{\rn}\mu_iQ_1(x)|u_{k,i}|^{2p-2}u_{k,i}\varphi_{k,i}+\sum\limits_{\substack{j=1\\j\neq i}}^\ell\lambda_{ij,k}\int_{\rn}|u_{j,k}|^p|u_{k,i}|^{p-2}u_{k,i}\varphi_{k,i}
-\eps^2_k\irn u_{k,i}\vp_{k,i}\\
&\leq \int_{B_1}\mu_i|u_{k,i}|^{2p-2}u_{k,i}\varphi_{k,i}.
\end{align*}
On the other hand,
\begin{equation*}
\begin{split}
\nabla u_{k,i}\cdot \nabla \varphi_{k,i}= \min \{|u_{k,i}|^{2s},M^{2} \}|\nabla u_{k,i} |^{2}+2s|u_{k,i}|^{2s}|\nabla u_{k,i}|^{2}1_{A}\geq  \min \{|u_{k,i}|^{2s},M^{2} \}|\nabla u_{k,i} |^{2}.
\end{split}
\end{equation*}
Since the embedding $H^1(B_1)\hookrightarrow L^{2p}(B_1)$ is continuous, there is $C=C(N,p,|B_1|)>0$ such that
\begin{align}
&\Big(\int_{B_1} \left| \min \{|u_{k,i}|^{s},M \} u_{k,i}\right|^{2p} \Big)^{\frac{2}{2p}}\leq  C\int_{B_1} \left| \nabla\left( \min \{|u_{k,i}|^{s},M \} u_{k,i}\right) \right|^{2}
+C\int_{B_1} \left| \min \{|u_{k,i}|^{s},M \} u_{k,i}\right|^{2} 
\notag\\
&\le C\int_{\rn} \left| \left( \min \{|u_{k,i}|^{s},M \} \nabla u_{k,i} +s|u_{k,i}|^{s}(\nabla u_{k,i})1_{A} \right) \right|^{2}
+C\int_{B_1} \left(\min \{|u_{k,i}|^{2s},M^2\} u_{k,i}\right)u_{k,i}
\notag\\
&\leq  2C\left( \int_{\rn}  \min \{|u_{k,i}|^{2s},M^{2} \} \left|\nabla u_{k,i}\right|^{2} + s^{2}\int_{\rn} |u_{k,i}|^{2s}1_{A}\left| \nabla u_{k,i} \right|^{2}\right)
+C\int_{B_1} u_{k,i}\varphi_{k,i}
\notag\\
&\leq 2C(1+s^{2})\int_{\rn}  \min \{|u_{k,i}|^{2s},M^{2} \} \left|\nabla u_{k,i}\right|^{2}+C\int_{B_1} u_{k,i}\varphi_{k,i}\notag\\
&\leq  2C(1+s^{2})\int_{\rn}\nabla u_{k,i}\cdot \nabla\varphi_{k,i}
+C\int_{B_1} u_{k,i}\varphi_{k,i}\notag\\
&\leq  2C(1+s^{2})\mu_i\int_{B_1}|u_{k,i}|^{2p-2}u_{k,i}\varphi_{k,i}
+C\int_{B_1} u_{k,i}\varphi_{k,i}.  \label{1}
\end{align}
Let $K>0$ and set $D_{i,K}:=\{x\in B_1: |u_{\infty,i}|^{2p-2}\geq K/\mu_i \}$ and $D_{i,K}^{c}:=\{x\in B_1: |u_{\infty,i}|^{2p-2}< K/\mu_i \}$. Since $u_{k,i}\varphi_{k,i}\in L^p(B_1)$ and $u_{k,i}\varphi_{k,i}\geq 0$, using Hölder's inequality, we obtain
\begin{align}
&\int_{B_1}|u_{k,i}|^{2p-2}u_{k,i}\varphi_{k,i}\notag \\
& = \int_{B_1}\left(|u_{k,i}|^{2p-2}-|u_{\infty,i}|^{2p-2} \right) u_{k,i}\varphi _{k,i} +\int_{B_1}|u_{\infty,i}|^{2p-2}  u_{k,i}\varphi _{k,i} \notag\\
& = \int_{B_1}\left(|u_{k,i}|^{2p-2}-|u_{\infty,i}|^{2p-2}\right) u_{k,i}\varphi _{k,i} +\int_{D_{i,K}}|u_{\infty,i}|^{2p-2}  u_{k,i}\varphi _{k,i}+\int_{D_{i,K}^{c}}|u_{\infty,i}|^{2p-2}  u_{k,i}\varphi _{k,i} \notag\\
&\leq \Big( \int_{B_1}\left| |u_{k,i}|^{2p-2}-|u_{\infty,i}|^{2p-2}\right|^{\frac{p}{p-1}}\Big)^{\frac{p-1}{p}}\left| u_{k,i}\varphi _{k,i}\right|_{p;B_1}+\Big(\int_{D_{i,K}} |u_{\infty,i}|^{2p} \Big)^{\frac{p-1}{p}} \left| u_{k,i}\varphi _{k,i}\right|_{p;B_1} +\frac K{\mu_i}\int_{B_1}  u_{k,i}\varphi _{k,i}.
\label{2}
\end{align}
Since $u_{\infty,i}\in L^{2p}(\rn)$, we may fix $K$ large enough so that
\begin{equation}\label{3}
	\Big(\int_{D_{i,K}} |u_{\infty,i}|^{2p} \Big)^{\frac{p-1}{p}} <\frac{1}{8C\mu_i(1+s^{2})}.
\end{equation}
Moreover, since  $u_{k,i}\rightarrow u_{\infty,i}$ strongly in $ L^{2p}(\rn)$, there exist $D>0$ and $k_{0}\in \mathbb N$ such that
\begin{equation}\label{4}
	\left( \int_{B_1}\left| |u_{k,i}|^{2p-2}-|u_{\infty,i}|^{2p-2}\right|^{\frac{p}{p-1}}\right)^{\frac{p-1}{p}}\leq D	 \left( \int_{B_1}\left| u_{k,i}-u_{\infty,i}\right|^{2p}\right)^{\frac{p-1}{p}} \leq \frac{1}{8C\mu_i(1+s^{2})}
\end{equation}
for any $k>k_{0}$. Thus, using~\eqref{1},~\eqref{2},~\eqref{3} and~\eqref{4}, we have
\begin{align*}
\left| u_{k,i}\varphi _{k,i}\right|_{p;B_1}
&=\left(\int_{B_1}\left| u_{k,i}\varphi _{k,i}\right|^{p}\right)^\frac{1}{p}
=\Big(\int_{B_1} \left| \min \{|u_{k,i}|^{s},M \} u_{k,i}\right|^{2p} \Big)^{\frac{2}{2p}}\\
&\leq \frac{1}{4}\left| u_{k,i}\varphi _{k,i}\right|_{p;B_1}+ \frac{1}{4}\left| u_{k,i}\varphi _{k,i}\right|_{p;B_1} + 2C(1+s^{2})K\int_{B_1}  u_{k,i}\varphi _{k,i}
+C\int_{B_1} u_{k,i}\varphi_{k,i}\\
&\leq \frac{1}{2}\left| u_{k,i}\varphi _{k,i}\right|_{p;B_1}
+ 3C(1+s^{2})K\int_{B_1}  u_{k,i}\varphi _{k,i}.
\end{align*}
Hence,
\begin{align*}
\frac{1}{2}\left(\int_{B_1}|u_{k,i}\min\{|u_{k,i}|^{2s},M^{2}\}u_{k,i}|^{p}\right)^{\frac{2}{2p}}
&=\frac{1}{2}\left| u_{k,i}\varphi _{k,i}\right|_{p;B_1}\leq 3K C(1+s^{2})\int_{B_1}u_{k,i}\varphi _{k,i} \\
&=3K C(1+s^{2})\int_{B_1}\min\{|u_{k,i}|^{2s},M^{2}\}u_{k,i}^{2}.
\end{align*}
Since we assumed that $u_{k,i}\in L^{2(s+1)}(B_1)$, we can pass to the limit as $M\to\infty$ which gives
\begin{align}
\int_{B_1}  |u_{k,i}|^{2p(s+1)}
&\leq \left((6K C)(1+s^{2})\left(\int_{B_1} |u_{k,i}|^{2(s+1)}\right)\right)^p \nonumber \\
&\leq (6 K C)^p(1+s)^{2p}\left(\int_{B_1} |u_{k,i}|^{2(s+1)}\right)^p.\label{s1}
\end{align}

Now, to obtain a uniform bound in $L^\infty(B_1)$, we argue as in \cite[Lemma 3.2]{tt}. First, let $C_1$ be such that $\int_{\rn} |u_{k,i}|^{2p} \leq C_1$ for all $k\in\n$, and let $C_0:=6 K C$. Then, setting $s:=p-1$, estimate~\eqref{s1} yields
\begin{equation*}
\int_{B_1}  |u_{k,i}|^{2p^2}
\leq C_0^p p^{2p}C_1^p=:C_2.
\end{equation*}
Next, setting $s:=p^2-1$, estimate~\eqref{s1} yields that
\begin{equation*}
\int_{B_1}  |u_{k,i}|^{2p^3}
\leq C_0^p p^{4p} C_2^p
=C_0^{p+p^2} p^{2(p^2+2p)} C_1^{p^2}
=:C_3.
\end{equation*}
Setting $s_m:=p^m-1$ and  iterating this procedure, for each $m\in \mathbb N$, we obtain
\begin{equation*}
\int_{B_1}  |u_{k,i}|^{2p^m}
\leq C_0^{\sum_{n=1}^m p^n} p^{2\sum_{n=1}^m (m-n+1) p^{n}} C_1^{p^m}=:C_m.
\end{equation*}
Let $q:= p^{-1}$. Then
\[
\frac{1}{p^m}\sum_{n=1}^m p^n = \sum_{n=1}^m q^{m-n} = \frac{1-q^m}{1-q} \to \frac1{1-q} = \frac p{p-1}\qquad \text{ as }m\to \infty 
\]
and 
\begin{align*}
 \frac{1}{p^m}\sum_{n=1}^m (m-n+1) p^{n} & = \sum_{n=1}^m (m-n+1) q^{m-n} = \frac d{dq}\sum_{n=1}^m q^{m-n+1} = \frac d{dq}\Big(q\frac{1-q^m}{1-q}\Big) \to \frac d{dq}\Big(\frac q{1-q}\Big) \\
 & = \frac1{(1-q)^2} = \frac{p^2}{(p-1)^2} \qquad \text{as } m\to\infty.
\end{align*}
Hence,
\begin{align*}
|u_{k,i}|_{\infty;B_1}
=\lim_{m\to \infty}
|u_{k,i}|_{2p^m;B_1} \leq \lim_{m\to\infty} C_m^{\frac{1}{2p_m}}
= C_0^{\frac{p}{2(p-1)}} p^{\frac{p^2}{(p-1)^2}} C_1^{\frac{1}{2}}=:\kappa\qquad\text{for every \ }k\in\n.
\end{align*}
Finally, we extend this bound to the rest of $\rn$. Set $\phi:=\max\{u_{k,i}-\kappa,0\}\geq 0$. Note that $\nabla u_{i,k}\cdot\nabla \phi=1_{\{u_{i,k}>\kappa\}}|\nabla u_{i,k}|^2=|\nabla \phi|^2$. Then, since $\bf u_k$ satisfies~\eqref{eq:ap} and $\phi(x)=0$ for a.e. $x\in B_1$,
\begin{align*}
0\leq \int_{\rn}|\nabla \phi|^2
=\int_{\rn}\nabla u_{k,i}\cdot\nabla \phi
=\int_{\rn\smallsetminus B_1}\Bigg(-\mu_1 u_{k,i}^{2p-1}-\eps_k^2u_{k,i}+\sum\limits_{\substack{j=1\\j\neq i}}^\ell\lambda_{ij,k}u_{k,j}^pu_{k,i}^{p-1}
\Bigg)\phi\leq 0.
\end{align*}
This implies that $\phi\equiv 0$ and therefore $u_{k,i}\leq \kappa$ in $\rn$ as claimed.
\end{proof}

\begin{remark}
\emph{
Note that the last part in the proof of Lemma \ref{maincont} shows that the supremum of nonnegative solutions to systems like \eqref{eq:ap} is always achieved at the region of attraction (where the coefficient $Q_1$ is positive).
}
\end{remark}

\noindent\textbf{Statements and Declarations}

Conflict of Interest: The authors declare that they have no conflict of interest.

\bigskip

\begin{flushleft}
\textbf{Mónica Clapp}\\
Instituto de Matemáticas\\
Universidad Nacional Autónoma de México \\
Campus Juriquilla\\
76230 Querétaro, Qro., Mexico\\
\texttt{monica.clapp@im.unam.mx}
\medskip

\textbf{Alberto Saldaña}\\
Instituto de Matemáticas\\
Universidad Nacional Autónoma de México \\
Circuito Exterior, Ciudad Universitaria\\
04510 Coyoacán, Ciudad de México, Mexico\\
\texttt{alberto.saldana@im.unam.mx}
\medskip

\textbf{Andrzej Szulkin}\\
Department of Mathematics\\
Stockholm University\\
106 91 Stockholm, Sweden\\
\texttt{andrzejs@math.su.se}
\end{flushleft}

\end{document}